\documentclass[12pt]{amsart}
\usepackage{color,graphicx,array, amssymb, amscd,slashed,fancyhdr}
%\usepackage{hyperref}
%%%%%%%%%%%%%%%%%%%%%%%%%%%%%%%%%%%%%%%%%%%%%%%%%%%%%%%%%%%%%%%%%%%%%%%%%%%%%%%%%%%%%
\newtheorem{Theorem}{Theorem}[section]
\newtheorem{Lemma}[Theorem]{Lemma}
\newtheorem{Proposition}[Theorem]{Proposition}
\newtheorem{Corollary}[Theorem]{Corollary}
\theoremstyle{definition}
\newtheorem{Definition}{Definition}
\theoremstyle{remark}
\newtheorem{Example}{Example}
\newtheorem{Remark}[Theorem]{Remark} 
%%%%

\numberwithin{equation}{section}
%%%%%%%%%%%%%%%%%%%%%%%%%%%%%%%%%%%%%%%%%%%%%%%%%%%%%%%%%%%%%%%%%%%%%%%%%%%%%%%%%%%%%
\setlength{\parindent}{0in}
\setlength{\parskip}{0.1in}
\setlength{\oddsidemargin}{0in}
\setlength{\evensidemargin}{0in}
\setlength{\textwidth}{6.5in}
\setlength{\textheight}{9.in}
\setlength{\topmargin}{-0.5in}
%%%%%%%%%%%%%%%%%%%%%%%%%%%%%%%%%%%%%%
\newcommand{\R}{\mathbb R}
\newcommand{\C}{\mathbb C}

\newcommand{\D}{\mathbb D}

\newcommand{\GL}{{\rm GL}_2 \mathbb C}

\newcommand{\ISU}{{\rm SU}_{1, 1}}

\newcommand{\id}{\operatorname{id}}

\newcommand{\isu}{\mathfrak{su}_{1, 1}}
\newcommand{\SL}{{\rm SL}_2 \mathbb C}

\newcommand{\LSL}{\Lambda {\rm SL}_2 \mathbb C_{\sigma}}
\newcommand{\LSLPM}{\Lambda^{\pm} {\rm SL}_{2} \mathbb C_{\sigma}}
\newcommand{\LSLM}{\Lambda^{-} {\rm SL}_{2} \mathbb C_{\sigma}}
\newcommand{\LSLMI}{\Lambda_*^{-} {\rm SL}_{2} \mathbb C_{\sigma}}
\newcommand{\LSLP}{\Lambda^+ {\rm SL}_{2} \mathbb C_{\sigma}}
\newcommand{\LSLPI}{\Lambda_*^+ {\rm SL}_{2} \mathbb C_{\sigma}}

\newcommand{\LISU}{(\Lambda {\rm SU}_{1, 1})_\sigma}

\newcommand{\ad}{\operatorname{Ad}}
\newcommand{\di}{\operatorname{diag}}

\newcommand{\Nil}{{\rm Nil}_3}
\renewcommand{\Re}{\operatorname {Re}}
\renewcommand{\Im}{\operatorname {Im}}
\newcommand{\sdz}{(dz)^{1/2}}
\newcommand{\sdzb}{(d\bar z)^{1/2}}
%%%%%%%%%%%%%%%%%%%%%%%%%%%%%%%%%%%%%%%%%%%%%%
\pagestyle{plain}
%%%%%%%%%%%%%%%%%%%%%%%%%%%%%%%%%%%%%%%%%%
\begin{document}
\title{A loop group method for minimal surfaces in the 
 three-dimensional Heisenberg group}
 \author[J. F.~Dorfmeister]{Josef F. Dorfmeister}
 \address{Fakult\"at f\"ur Mathematik, 
 TU-M\"unchen, 
 Boltzmann str. 3,
 D-85747, 
 Garching, 
 Germany}
 \email{dorfm@ma.tum.de}
\author[J.~Inoguchi]{Jun-ichi Inoguchi}
 \address{Department of Mathematical Sciences, 
 Faculty of Science,
 Yamagata University, 
 Yamagata, 990--8560, Japan}
 \email{inoguchi@sci.kj.yamagata-u.ac.jp}
 \thanks{The second named author is partially supported by Kakenhi 
21546067, 24540063}
\author[S.-P.~Kobayashi]{Shimpei Kobayashi}
 \address{Department of Mathematics, Hokkaido University, 
 Sapporo, 060-0810, Japan}
 \email{shimpei@math.sci.hokudai.ac.jp}
 \thanks{The third named author is partially supported by Kakenhi 
23740042}
\subjclass[2010]{Primary~53A10, 58D10, Secondary~53C42}
\keywords{Constant mean curvature; Heisenberg group; 
 spinors; generalized Weierstrass type representation}
\date{\today}
%%%%%%%%%%%%%%%%%%%%%%%%%%%%%%%%%%%%%%%%%%%%%%%%%%%%%%%%%
\begin{abstract} 
 We characterize constant mean curvature surfaces in the 
 three-dimensional 
 Heisenberg group by a family of flat connections 
 on the trivial bundle $\D \times \GL$ over a simply connected 
 domain $\mathbb{D}$ in the complex plane. 
 In particular for minimal surfaces, 
 we give an immersion formula, the so-called Sym-formula, 
 and a generalized Weierstrass type representation via the loop group method.
\end{abstract}
%%%%%%    TEXT START    %%%%%%%%
\maketitle
\section*{Introduction}
 Surfaces of constant curvature or of constant mean curvature 
 in space forms (of both definite and indefinite type) have been 
 investigated since the beginning of differential geometry. 
 For more than fifteen years now a loop group technique has been 
 used to investigate these surfaces, 
 see \cite{Dorfmeister, Kob:Realforms}.
 
 During the last few years, surfaces of 
 constant mean curvature in more general three-dimensional manifolds 
 have been investigated. A natural target were the model spaces of 
 Thurston geometries, see \cite{Daniel:Homogeneous}.
 
 According to Thurston \cite{Thurston}, 
 there are eight model spaces of three-dimensional 
 geometries, Euclidean $3$-space $\mathbb{R}^3$, $3$-sphere 
 $\mathbb{S}^3$, 
 hyperbolic $3$-space $\mathbb{H}^3$, Riemannian products 
 $\mathbb{S}^{2}\times \mathbb{R}$ and 
 $\mathbb{H}^{2}\times \mathbb{R}$, the three-dimensional 
 Heisenberg group 
 $\mathrm{Nil}_3$, the universal covering 
 $\widetilde{\mathrm{SL}_{2}\mathbb{R}}$ of 
 the special linear group and 
 the space $\mathrm{Sol}_3$. 
 The geometrization conjecture posed by Thurston (and solved by 
 Perelman) states that these eight model spaces are the 
 \textit{building blocks} to construct any three-dimensional manifolds.
 The dimension of the isometry group of the model spaces is greater than $3$, 
 except in the case $\mathrm{Sol}_3$.  In particular, the space forms 
 $\mathbb{R}^3$, $\mathbb{S}^3$ and $\mathbb{H}^3$ have $6$-dimensional 
 isometry groups. 
 The model spaces with the exception of $\mathrm{Sol}_3$ and $\mathbb{H}^3$ 
 belong to the following 
 $2$-parameter family 
 $\{E(\kappa,\tau)\ \ \vert \ \kappa,\tau\in \mathbb{R}\}$ of 
 homogeneous Riemannian $3$-manifolds: Let
 $$
 E(\kappa,\tau)=(\mathcal{D}_{\kappa,\tau},ds_{\kappa,\tau}^{2}),
 $$
 where the domain $\mathcal{D}_{\kappa,\tau}$ is the whole 
 $3$-space $\mathbb{R}^3$ for $\kappa\geq 0$ and 
 $$
 \mathcal{D}_{\kappa,\tau}:=\{ 
 (x_1,x_2,x_3) \in \mathbb{R}^3 
 \ 
 \vert
 \ x_1^{2}+x_2^{2}<-4/\kappa
 \}
 $$
 for $\kappa<0$. The Riemannian metric 
 $ds_{\kappa,\tau}^2$ is given by
$$
 ds_{\kappa,\tau}^2=
 \frac{dx_1^2+dx_2^2}{\left(1+\frac{\kappa}{4}(x_1^2+x_2^2)\right)^2}+
 \left(
 dx_{3}+ \frac{\tau (x_{2}dx_{1}-x_{1}dx_{2})}{1+\frac{\kappa}{4}(x_1^2+x_2^2)}
 \right)^2.
$$ 
 This $2$-parameter family can be seen in the 
 local classification of all 
 homogeneous Riemannian metrics on $\mathbb{R}^3$ 
 due to Bianchi, \cite{Bianchi}, see also 
 Vranceanu \cite[p.~354]{V}. 
 Cartan classified transitive isometric actions 
 of $4$-dimensional Lie groups on Riemannian $3$-manifolds 
 \cite[pp.~293--306]{Cartan}. Thus the family $E(\kappa, \tau)$ with 
 $\kappa \in \mathbb R$ and $\tau \geq 0$ is referred to as the 
 \textit{Bianchi-Cartan-Vranceanu family}, \cite{BDI}.
 By what was said above the Bianchi-Cartan-Vranceanu family includes 
 all local three-dimensional homogeneous Riemannian metrics
 whose isometry groups have dimension greater than $3$ except
 constant negative curvature metrics. 
 The parameters 
 $\kappa$ and $\tau$ are called the 
 \textit{base curvature} and \textit{bundle curvature} of 
 $E(\kappa,\tau)$, respectively.
 
 The Heisenberg group $\mathrm{Nil}_3$ together with a 
 standard left-invariant metric is isometric to the homogeneous 
 Riemannian manifold
 $E(\kappa,\tau)$ with $\kappa=0$ and $\tau\not=0$. 
 Without loss of generality we can normalize $\tau=1/2$ for 
 the Heisenberg group: $\mathrm{Nil}_3=E(0,1/2)$. 
 Note that $E(0,1)$ is the
 \textit{Sasakian space form}, $\mathbb{R}^{3}(-3)$, see 
 \cite{Boyer, BG}.

 An important piece of progress of surface geometry 
 in $E(\kappa,\tau)$ was a result of Abresch and Rosenberg 
 \cite{Abresch-Rosenberg}:
 A certain quadratic differential turned out to be holomorphic 
 for all surfaces of constant mean curvature in the 
 above model spaces $E(\kappa,\tau)$.
  
 Since in the classical case of surfaces in space forms the holomorphicity
 of the (unperturbed) Hopf differential was crucial for the existence of a 
 loop group approach to the construction of those surfaces, the 
 question arose, to what extent a loop group approach would also 
 exist for the more general class of constant mean curvature 
 surfaces in Thurston geometries.
 
 All the model spaces are Riemannian homogeneous spaces.
 Minimal surfaces in Riemannian homogeneous spaces are
 regarded as conformally harmonic maps from Riemann surfaces. 
 Conformally harmonic maps of Riemann surfaces into Riemannian 
 symmetric spaces admit a zero curvature representation and 
 hence loop group methods can be applied.

 More precisely, the loop group method has two key ingredients.
 One is the zero curvature representation of harmonic maps.
 The zero curvature representation is equivalent 
 with the existence of a loop of flat connections
 and this representation enables us to use loop groups. 
 The other one is an appropriate loop group decomposition. 
 A loop group decomposition recovers the harmonic map 
 (minimal surfaces) from holomorphic potentials. The construction 
 of harmonic maps from prescribed potentials is now referred to as the 
 \textit{generalized Weierstrass type representation} for harmonic 
 maps, see \cite{DPW}.

 Every (compact)
 semi-simple Lie group $G$ equipped with 
 a bi-invariant (semi-)Riemannian metric is represented by 
 $G\times G/G$ as a  (semi-)Riemannian symmetric 
 space. Thus we can apply the loop group method to harmonic maps into $G$. 
 Harmonic maps from the two-sphere $\mathbb{S}^2$ or the 
 two-torus $\mathbb{T}^2$ 
 into compact semi-simple Lie groups 
 have been studied extensively, see \cite{BFPP, BP, Segal, Uhlenbeck}.
 The three-sphere $\mathbb{S}^3$ is identified with the 
 special unitary group $\mathrm{SU}_2$ equipped with a 
 bi-invariant Riemannian metric of constant curvature $1$.
 Thus we can study minimal surfaces in $\mathbb{S}^3$ by a loop group method.
 Note that harmonic tori in $\mathbb{S}^3$ have been 
 classified by Hitchin \cite{Hitchin} via the spectral curve method. 

 It is known that model spaces, except $\mathbb{S}^2\times \mathbb{R}$,
 can be realized as Lie groups equipped with left-invariant Riemannian metrics.
 Thus it has been expected to generalize the loop group method for 
 harmonic maps of Riemann surfaces into compact Lie groups equipped 
 with a bi-invariant Riemannian 
 metric to those for maps into more general Lie groups. 
 However the bi-invariant property is essential for the application of the 
 loop group method. 
%
% In fact, as we shall see in this paper, harmonic maps into general Lie groups 
% do not admit a zero curvature representation (Proposition \ref{ZCRforG}). 

 Thus to establish a generalized Weierstrass type representation for minimal 
 surfaces (or more generally CMC surfaces) in model spaces of Thurston 
 geometries, 
 another key ingredient is required.    
 Since $\Nil$ seems to be a particularly simple example of a 
 Thurston geometry and more work has been done for 
 this target space than for other ones, 
 we would like to attempt to introduce a loop group approach to 
 constant mean curvature surfaces in $\Nil$.
 
 The procedure is as follows: Consider a conformal 
 immersion $f:\mathbb{D}\to\Nil=E(0,1/2)$ of a 
 simply connected domain of the complex plane $\mathbb{C}$. 
 Then define a matrix valued function $\varPhi$ 
 by $\varPhi=f^{-1}\partial_{z}f$ 
 and expand it as $\varPhi=\sum_{k=1}^{3}\phi_{k}e_{k}$ relative to the 
 natural basis $\{e_1,e_2,e_3\}$ of the Lie algebra of $\mathrm{Nil}_3$. 
 The new key ingredient is the \textit{spin structure} of Riemann surfaces.
 Represent $(\phi_1,\phi_2,\phi_3)$ by 
 $(\phi_1, \phi_2, \phi_3) = ((\overline{\psi_2})^2 - \psi_1^2,  
 \;i((\overline{\psi_2})^2 + \psi_1^2), \;2 \psi_1 \overline{\psi_2})$ 
 in terms of \textit{spinors} $\{\psi_1,\psi_2\}$ that are unique up to a sign.

 It has been shown by Berdinsky \cite{Ber:Heisenberg},
 that the spinor field $\psi = (\psi_1, \psi_2)$ satisfies the 
 matrix system of equations $\partial_z \psi = \psi U$, 
 $\partial_{\bar z} \psi = \psi V$, where the coefficients of 
 $U$ and $V$ have a simple form in terms of 
 the \textit{mean curvature} $H$, 
 the \textit{conformal factor} $e^u$ of the metric, 
 the spin geometric \textit{support function} $h$ of the normal vector field of the immersion 
 and the \textit{Abresch-Rosenberg quadratic differential} $Q dz^2$. 
 In \cite{BT:Sur-Lie}, another quadratic differential, 
 $\tilde Adz^2$ where $\tilde{A}=Q/(2H+i)$ was introduced. 
 For $H =\mbox{const}$ there is, obviously, not much of a difference.
 However for $\Nil$ it turns out that $\tilde A$ is holomorphic if 
 and only if $f$ has constant mean curvature, \cite{BT:Sur-Lie},  but $Q$ is 
 holomorphic for all constant mean curvature surfaces and, in addition, 
 also for one non constant mean curvature surface, the so-called 
 \textit{Hopf cylinder} (Theorem \ref{thm:ARholo}). 
 A similar situation occurs for other Thurston geometries, see 
 \cite{ Fer-Mira}.

 One can show that for every conformal constant mean curvature immersion
  $f$ into $\Nil$ the Berdinsky system describes a harmonic map into a 
 symmetric space $\GL/ \di $ (Theorem \ref{thm:CMCcharact}).
 Thus the corresponding system can be constructed by 
 the loop group method, 
 that is, there exists an \textit{associated family} of surfaces 
 parametrized by a \textit{spectral parameter}.
 However, it is not clear so far, how one can make sure that 
 a solution spinor $\psi = (\psi_1, \psi_2)$ to 
 the Berdinsky system induces, 
 via $f^{-1}\partial_z f = \sum_{k=1}^3 \phi_k e_k$, with
 $\phi_k$ the ``Weierstrass type representations''
 formed with $\psi_1$ and $\psi_2$ as above, a (real !) immersion 
 into $\Nil$. 

 Unfortunately, a result of Berdinsky \cite{Ber:Note}
 shows that a naturally associated family of surfaces cannot 
 stay in $\Nil$ for all values of the spectral parameter 
 (Corollary \ref{coro:CMCnotinNil}).
 However, for the case of minimal surfaces in $\Nil$ this problem 
 does not arise. Therefore, as a first attempt to introduce a loop 
 group method for the discussion of constant mean curvature surfaces in 
 Thurston geometries, 
 we present in this paper a loop group approach to minimal 
 surfaces in $\Nil$.
%
% As pointed out before, the formula proven by Berdinsky in \cite{Ber:Note} 
% shows that
% a solution to the Berdinsky system can produce a 
% constant mean curvature surface only for special values of
% the spectral parameter $\lambda$.
%
% Therefore, for various reasons we will concentrate in this paper 
% mainly on the case of minimal surfaces in $\Nil$, that is, 
% $H =0$, where none of the problems, listed above, do occur.

 Moreover, for the case of minimal surfaces, the normal Gauss maps, 
 which are maps into hyperbolic $2$-space $\mathbb{H}^2$, are
 harmonic (Theorem \ref{thm:mincharact}) 
 and an immersion formula is obtained from the frame 
 of the normal Gauss map, the so-called 
 \textit{Sym-formula} (Theorem \ref{thm:Sym}), see 
 also \cite{Cartier}. Thus 
 the loop group method can be applied without restrictions
 to the case of minimal surfaces, that is, 
 a pair of meromorphic $1$-forms, through the loop group 
 decomposition, determines a minimal surface, the so-called 
 \textit{generalized Weierstrass type representation}.
 It is worthwhile to note that the associated family of a minimal surface 
 in $\Nil$ preserves the support but not the metric. This 
 gives a geometric characterization of the associated family
 of a minimal surface in $\Nil$ which is different from the case of 
 constant mean curvature surfaces in $\R^3$, where 
 the associated family preserves the metric (Corollary \ref{coro:associate}).

 This paper is organized as follows: 
 In Sections \ref{sc:Preliminaries}--\ref{sc:SurfaceTheory}, we give 
 basic results for harmonic maps and surfaces in $\Nil$. In Section \ref{sc:CMCinNil}, constant 
 mean curvature surfaces in $\Nil$ are characterized by a family of 
 flat connections on $\D \times \GL$. In Sections \ref{sc:minimal} and  
 \ref{sc:Sym}, we will concentrate on minimal surfaces.  
 In particular, minimal surfaces in $\Nil$ are characterized by a family 
 of flat connections on $\D \times \ISU$ and an immersion formula 
 in terms of an extended frame is given.
 In Sections \ref{sc:potential} and \ref{sc:Weierstrass}, 
 a generalized Weierstrass type 
 representation for minimal surfaces in $\Nil$ is given via 
 the loop group method, that is, a minimal surface 
 is recovered by a pair of holomorphic functions through
 the loop group decomposition. In Section \ref{sc:examples},
 several examples are given by the generalized Weierstrass type 
 representation established in this paper.

\textbf{Acknowledgements:} 
 This work was started when the first and third named authors visited 
 Tsinghua  University, 2011. We would like to express our sincere thanks
 to the Department of Mathematics of  Tsinghua  University 
 for its hospitality. 

\section{Minimal surfaces in Lie groups}
\label{sc:Preliminaries}
\subsection{}
 Let $G\subset \mathrm{GL}_{n}\mathbb{R}$ be a closed 
 subgroup of the real general linear group of degree $n$.
 Denote by $\mathfrak{g}$ the Lie algebra of $G$, that is, 
 the tangent space of $G$ at the identity. 
 We equip $\mathfrak g$ with an inner product $\langle \cdot,\cdot\rangle$
 and extend it to a left-invariant Riemannian metric 
 $ds^2=\langle \cdot,\cdot\rangle$ on $G$.

 Now let $f:M\to G$ be a smooth map of a Riemann surface $M$ into $G$. 
 Then 
$\alpha:=f^{-1}df$ satisfies the \textit{Maurer-Cartan equation}:
$$
d\alpha+\frac{1}{2}[\alpha \wedge \alpha]=0.
$$
 Take a local complex coordinate $z=x+iy$ defined on a 
 simply connected domain $\mathbb{D}\subset M$ and express $\alpha$ as 
$$
\alpha=\varPhi\>dz+\bar{\varPhi}\>d\bar{z}.
$$ 
 Here the coefficient matrices $\varPhi$ and $\bar {\varPhi}$ are computed as 
$$
\varPhi=f^{-1}f_{z},
\ \
\bar{\varPhi}=f^{-1}f_{\bar z}.
$$
 The subscripts $z$ and $\bar{z}$ denote the partial differentiations 
 $\partial_{z}=(\partial_{x}-i\partial_{y})/2$ and 
 $\partial_{\bar z}=(\partial_{x}+i\partial_{y})/2$,
 respectively. We note that 
 $\bar{\varPhi}$ is the complex conjugate of $\varPhi$, since $f$ takes 
 values in $G\subset \mathrm{GL}_{n}\mathbb{R}$.

 Denote the complex bilinear extension of $\langle\cdot,\cdot\rangle$ 
 to $\mathfrak g^{\mathbb C}$ by the same letter. 
 Then $f$ is a conformal immersion if and only if 
\begin{equation}\label{conformalimmersion}
\langle \varPhi,\varPhi\rangle=0,
\ \
\langle \varPhi,\bar{\varPhi}\rangle>0.
\end{equation}
 For a conformal immersion $f:M\to G$, the 
 induced metric (also called the first fundamental form)
 $\langle df,df\rangle$, is represented as $e^{u}dzd{\bar{z}}$. The 
 function $e^{u}:=2\langle f_{z},f_{\bar z}\rangle$ is called 
 the \textit{conformal factor} of the metric with respect to $z$.

 Next take an orthonormal basis $\{e_1,e_2,\cdots,e_\ell\}$ of the Lie 
 algebra $\mathfrak{g}$ ($\ell=\dim \mathfrak{g}$). Expand $\varPhi$ as
 $\varPhi=\phi_{1}e_{1}+\phi_{2}e_{2}+\cdots+\phi_{\ell}e_{\ell}$. Then
 we have the following fundamental fact.
\begin{Proposition}\label{prop:Integrablity}
 Let $f:M \rightarrow G\subset \mathrm{GL}_{n}\mathbb{R}$  
 be a conformal immersion with the conformal factor $e^u$. 
 Moreover, set $\varPhi = f^{-1} f_z = \sum_{k=1}^{\ell}\phi_k e_k$. 
 Then the following statements hold{\rm:}
 \begin{align}\label{basic integrability 1}
 &f_z = f \varPhi, \;\;\;\;\;f_{\bar z} = f{\bar \varPhi}, \\
 \label{basic integrability 2}
 &\sum_{k=1}^\ell \phi_k^2 =0, \\
 \label{basic integrability 2-b}
 &\sum_{k=1}^\ell |\phi_k|^2 =\frac{1}{2} e^{u}.
 \end{align}
 In particular, $\varPhi$ and $\bar{\varPhi}$ 
 satisfy the integrability condition
 \begin{equation}\label{basic integrability 3}
 \varPhi_{\bar z} - {\bar \varPhi}_z + 
 \lbrack {\bar \varPhi}, \varPhi \rbrack = 0.
 \end{equation}
 Conversely, let $\D$ be a simply-connected domain and 
 $\varPhi = \sum_{k=1}^\ell \phi_k e_k$ a non-zero
 $1$-form on $\D$ which takes values in the 
 complexification $\mathfrak{g}^{\mathbb{C}}$ of 
 $\mathfrak{g}$ satisfying the conditions 
 \eqref{basic integrability 2} and \eqref{basic integrability 3}. 
 Then for any initial condition in $G$ given at some base point in $\D$
 there exists a unique conformal immersion $f$ into $G$. 
 \end{Proposition}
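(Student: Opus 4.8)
The plan is to treat the two directions separately, deriving \eqref{basic integrability 1}--\eqref{basic integrability 3} from the hypotheses in the forward direction, and then reversing the logic via the fundamental theorem for the integration of Maurer-Cartan forms on the simply connected domain $\D$. For the forward direction, both identities in \eqref{basic integrability 1} are immediate from the definitions $\varPhi = f^{-1}f_z$ and $\bar\varPhi = f^{-1}f_{\bar z}$: multiplying on the left by $f$ gives $f_z = f\varPhi$ and $f_{\bar z} = f\bar\varPhi$. For \eqref{basic integrability 2} I would use that $\{e_1,\dots,e_\ell\}$ is orthonormal, so that under the complex bilinear extension $\langle\varPhi,\varPhi\rangle = \sum_{j,k}\phi_j\phi_k\langle e_j,e_k\rangle = \sum_k\phi_k^2$; the first conformality condition in \eqref{conformalimmersion} then yields $\sum_k\phi_k^2 = 0$. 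Identity \eqref{basic integrability 2-b} follows the same way: $\langle\varPhi,\bar\varPhi\rangle = \sum_k|\phi_k|^2$, and since the metric is left-invariant, $e^u = 2\langle f_z,f_{\bar z}\rangle = 2\langle\varPhi,\bar\varPhi\rangle$, whence $\sum_k|\phi_k|^2 = \tfrac{1}{2}e^u$.

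The integrability condition \eqref{basic integrability 3} I would obtain as the $dz\wedge d\bar z$-component of the Maurer-Cartan equation. Substituting $\alpha = \varPhi\,dz + \bar\varPhi\,d\bar z$ into $d\alpha + \tfrac{1}{2}[\alpha\wedge\alpha] = 0$ and using $dz\wedge dz = d\bar z\wedge d\bar z = 0$, one finds $d\alpha = (\bar\varPhi_z - \varPhi_{\bar z})\,dz\wedge d\bar z$ and $\tfrac{1}{2}[\alpha\wedge\alpha] = [\varPhi,\bar\varPhi]\,dz\wedge d\bar z$; collecting the coefficient of $dz\wedge d\bar z$ gives $\varPhi_{\bar z} - \bar\varPhi_z + [\bar\varPhi,\varPhi] = 0$, which is \eqref{basic integrability 3}.

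For the converse, given a nonvanishing $\varPhi = \sum_k\phi_k e_k$ satisfying \eqref{basic integrability 2} and \eqref{basic integrability 3}, I would set $\alpha := \varPhi\,dz + \bar\varPhi\,d\bar z$ and first observe that, because $\bar\varPhi$ is the complex conjugate of $\varPhi$, one has $\alpha = 2\Re\varPhi\,dx - 2\Im\varPhi\,dy$, so $\alpha$ is a genuine real $\mathfrak g$-valued $1$-form. The decisive point is that on the two-real-dimensional domain $\D$ the only surviving component of $d\alpha + \tfrac{1}{2}[\alpha\wedge\alpha]$ is the $dz\wedge d\bar z$-term, so the hypothesis \eqref{basic integrability 3} is precisely the Maurer-Cartan equation for $\alpha$. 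Since $\D$ is simply connected, the fundamental theorem then produces a smooth map $f:\D\to G$ with $f^{-1}df = \alpha$ taking any prescribed value at a base point, and uniqueness is part of that theorem. Finally \eqref{basic integrability 2} gives $\langle\varPhi,\varPhi\rangle = 0$, while nonvanishing of $\varPhi$ gives $\langle\varPhi,\bar\varPhi\rangle = \sum_k|\phi_k|^2 > 0$; hence $f$ is a conformal immersion with positive conformal factor $e^u = 2\sum_k|\phi_k|^2$.

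I expect the only genuinely delicate point to be the reduction in the converse of \eqref{basic integrability 3} to the full Maurer-Cartan equation. One must confirm both that $\alpha$ is real, so that it integrates to a map into the real group $G$ rather than into its complexification, and that no integrability beyond the single $dz\wedge d\bar z$-component is required --- which is exactly what the low dimension of $\D$ guarantees. Everything else is bookkeeping with the orthonormal basis and the complex bilinear extension of the metric.
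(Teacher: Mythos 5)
Your proof is correct, and your forward direction is exactly the routine computation the paper leaves implicit. In the converse, however, you diverge from the paper at the one step where something could go wrong, namely in guaranteeing that the solution lies in the real group $G$ rather than in its complexification. You observe at the outset that, since $\bar\varPhi$ is the complex conjugate of $\varPhi$, the form $\alpha = \varPhi\,dz + \bar\varPhi\,d\bar z = 2\Re\varPhi\,dx - 2\Im\varPhi\,dy$ is a genuine $\mathfrak{g}$-valued $1$-form, so the fundamental theorem on integrating Maurer--Cartan forms over the simply connected domain $\D$ produces $f:\D\to G$ directly, with the prescribed initial value. The paper instead integrates the linear system \eqref{basic integrability 1} --- whose compatibility condition is \eqref{basic integrability 3} --- to obtain a map into the complexification $G^{\mathbb{C}}$, and only afterwards verifies that the partial derivatives of $f\bar{f}^{-1}$ vanish, so that $f\bar{f}^{-1}$ is constant and equals $I$ once the initial condition is taken in $G$; this forces $f=\bar{f}$, hence $f$ takes values in $G$. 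The two arguments rest on the same reality fact (that $\alpha$ equals its own conjugate), used a priori in yours and a posteriori in the paper's. What your version buys is that $G^{\mathbb{C}}$ never needs to be mentioned; what the paper's version buys is that it only needs Frobenius-type solvability of the linear matrix system $f_z = f\varPhi$, $f_{\bar z} = f\bar\varPhi$, rather than the group-level integration theorem for $\mathfrak{g}$-valued forms (the two being equivalent in content for closed matrix groups). One loose end you share with the paper: for $f$ to be an immersion rather than a branched conformal map, $\varPhi$ must be nowhere zero, not merely not identically zero; you implicitly read the hypothesis this way when asserting $\sum_k|\phi_k|^2>0$ everywhere, which is also how the paper intends it.
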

 \begin{proof} 
 By \eqref{basic integrability 3} the integrability condition for the 
 equations \eqref{basic integrability 1} is satisfied. Therefore, there exists a
 map $f$ into the complexification 
 $G^\mathbb{C}$ of $G$ satisfying (\ref{basic integrability 1}). 
 Since the metric of $G$ is left-invariant, the conformality 
 and the non-degeneracy of a metric of $f$ 
 follows from \eqref{basic integrability 2} 
 and \eqref{basic integrability 2-b}.
 It is straightforward to verify that the
 partial derivatives of $f {\bar f}^{-1}$ vanish. Hence $ f {\bar f}^{-1}$
 is constant. If we have chosen an initial condition in $G$ for $f$, 
 then this constant matrix is $I$, the identity element in $G$.
 \end{proof}

\subsection{} 
Let $f:M\to{G}$ be a smooth map of a $2$-manifold $M$. 
Then $f$ induces a vector bundle $f^{*}TG$ over $M$ by
$$
f^{*}TG=\bigcup_{p\in M} T_{f(p)}G,
$$
where $TG$ is the tangent bundle of $G$.
The space of all smooth sections of $f^{*}TG$ is denoted by
$\varGamma(f^{*}TG)$. A section of $f^{*}TG$ is called
a \textit{vector field along} $f$.

The Levi-Civita 
connection $\nabla$ of $G$ induces a unique connection $\nabla^{f}$ 
on $f^{*}TG$ which satisfies the condition
$$
\nabla_{X}^{f}(V\circ{f})=
(\nabla_{df(X)}V)\circ{f},
$$
for all vector fields $X$ on $M$ 
and $V\in \varGamma(TG)$, see \cite[p.~4]{EL}.

Next assume that $M$ is a Riemannian $2$-manifold with 
a Riemannian metric $ds_M^2$. Then
the \textit{second fundamental form}
$\nabla{d}f$ of $f$ is defined by
\begin{equation}
(\nabla{d}f)(X,Y)
=\nabla^{f}_{X}{d}f(Y)-{d}f(\nabla^{M}_{X}Y),
\ \
X,Y \in \mathfrak{X}(M).
\end{equation}
Here $\nabla^M$ is the Levi-Civita connection of $(M,ds_M^2)$.
The \textit{tension field} $\tau(f)$ of $f$ is a section of 
$f^{*}TG$ defined by $\tau(f)=\mathrm{tr}(\nabla{d}f)$.

\subsection{}
For a smooth map $f:(M,ds^2_M)\to (G,ds^2)$, 
the \textit{energy} of $f$ is defined by 
$$
E(f)=\int_{M}\frac{1}{2}
|df|^{2}\>dA.
$$ 
A smooth map $f$ is a \textit{harmonic map}
provided that $f$ is a critical point of the energy 
under compactly supported variations. 
It is well known that $f$ is a harmonic map if and 
only if its tension field $\tau(f)$ is equal to zero, that is,
$$
\tau(f)=\mathrm{tr}(\nabla{d}f)=0.
$$ It should be remarked that the harmonicity of $f$ is invariant under 
conformal transformations of $M$. Thus the harmonicity makes sense for maps
from Riemann surfaces.

\subsection{}
 Let $f:M\to G$ be a conformal immersion of a Riemann surface $M$ into 
 $G$. Take a local complex coordinate $z = x+ i y$ and represent the 
 induced metric by $e^{u} dz d\bar z$. 
 It is a fundamental fact that the tension field 
 of $f$ is related to the \textit{mean curvature vector field} $\boldsymbol H$ 
 by:
 \begin{equation}\label{eq:meancurvature}
 \tau(f)=  2 \boldsymbol H. 
 \end{equation}
 This formula shows that a conformal immersion
$f:M\to{G}$ is a minimal surface 
if and only if it is harmonic.
 Since the metric is left-invariant the equation above can be 
 rephrased, using the vector fields $\varPhi= f^{-1} f_z$ and 
 $\bar \varPhi= f^{-1} f_{\bar z}$, as
\begin{equation}\label{eq:anotherstructure}
  \varPhi_{\bar z} + {\bar \varPhi}_z + \{\varPhi, \bar \varPhi\}
  = e^{u} f^{-1} \boldsymbol H,
\end{equation}
 where $\{\cdot, \cdot\}$ denotes the bilinear symmetric map defined by 
\begin{equation}\label{anticommutator}
 \{ X, Y\} = \nabla_X Y + \nabla_Y X
\end{equation}
for $X, Y \in \mathfrak g$.
 By \eqref{eq:anotherstructure},
 the harmonic map equation can be computed as
\footnote{
 Let $U:\mathfrak{g}\times \mathfrak{g}\to \mathfrak{g}$ denote the 
 symmetric bilinear map defined by
\begin{equation*}
 2\langle U(X,Y),Z\rangle
 =\langle X,[Z,Y]\rangle
 +
 \langle Y,[Z,X]\rangle,
 \ \ X,Y,Z 
 \in \mathfrak{g}.
\end{equation*}
 Then $2U(\varPhi, \bar \varPhi)$ and $\{\varPhi, \bar \varPhi\}$
 are the same, since 
 $$
 \langle \nabla_X Y + \nabla_Y X, Z\rangle 
 =\frac{1}{2} (X \langle Y, Z\rangle + Y \langle X, Z \rangle
 + X \langle Z, Y \rangle  
 -2 Z \langle X, Y\rangle -2 \langle [X, Z], Y \rangle 
 -2 \langle [Y, Z], X \rangle)$$
 and the left invariance of the vector fields implies that
 $X \langle Y, Z\rangle = Y \langle X, Z \rangle
 = X \langle Z, Y \rangle  =  Z \langle X, Y\rangle=0$.
 Moreover, since
 $$ 
 \langle X,[Z,Y]\rangle +
 \langle Y,[Z,X]\rangle = 
 -\langle X,\operatorname{ad}(Y)Z \rangle 
 -\langle Y,\operatorname{ad}(X)Z\rangle = 
 -\langle \operatorname{ad}^*(Y) X,Z \rangle 
 -\langle \operatorname{ad}^*(X)Y,Z\rangle,
 $$
 $2U(\varPhi, \bar \varPhi)$, $\{\varPhi, \bar \varPhi\}$ and 
 $-\operatorname{ad}^*(\varPhi) \bar \varPhi
 -\operatorname{ad}^*(\bar \varPhi) \varPhi$ 
 are the same (see \cite[Section 2.1]{BD} for another formulation 
 of harmonic maps into Lie groups with left-invariant metric).
 }
\begin{equation}\label{HME}
  \varPhi_{\bar z} +{\bar \varPhi}_z + \{\varPhi, \bar \varPhi\}=0.
\end{equation}
 Thus the Maurer-Cartan equation \eqref{basic integrability 3} 
 together with 
 the harmonic map equation (\ref{HME}) is equivalent to 
 \begin{equation}\label{harm+integ}
 2 \varPhi_{\bar z}+\{\varPhi,\bar {\varPhi}\}= [\varPhi,\bar {\varPhi}].
\end{equation}
 We summarize the above discussion as the following theorem.
\begin{Theorem}
 Let $f:M\to G$ be a conformal minimal immersion. Then 
 $\alpha=f^{-1}df=\varPhi{d}z+\bar {\varPhi}d\bar{z}$ satisfies 
 {\rm(\ref{conformalimmersion})} and 
 {\rm(\ref{harm+integ})}.
 Conversely, let $\mathbb{D}$ be a simply connected 
 domain and $\alpha=\varPhi{d}z+\bar {\varPhi}d\bar{z}$ a 
 $\mathfrak{g}$-valued 
 $1$-form on $\mathbb D$ 
 satisfying {\rm(\ref{conformalimmersion})} and 
 {\rm(\ref{harm+integ})}. 
 Then for any initial condition in $G$ 
 there exist a conformal minimal immersion $f:\mathbb{D}\to G$ such that $f^{-1}df=\alpha$. 
\end{Theorem}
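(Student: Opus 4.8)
The plan is to prove the two directions separately, with the forward direction being essentially a bookkeeping exercise and the converse requiring an application of Proposition \ref{prop:Integrablity}. For the forward direction, suppose $f:M\to G$ is a conformal minimal immersion. The conformality condition \eqref{conformalimmersion} is immediate from the definition of a conformal immersion, so that part requires no work. For the equation \eqref{harm+integ}, I would simply note that minimality is equivalent to harmonicity by the mean curvature formula \eqref{eq:meancurvature}, so $\boldsymbol H = 0$ and \eqref{eq:anotherstructure} reduces to the harmonic map equation \eqref{HME}. Then \eqref{harm+integ} is obtained by adding the Maurer-Cartan (integrability) equation \eqref{basic integrability 3} to \eqref{HME}: adding $\varPhi_{\bar z} - \bar\varPhi_z + [\bar\varPhi,\varPhi] = 0$ and $\varPhi_{\bar z} + \bar\varPhi_z + \{\varPhi,\bar\varPhi\} = 0$ cancels the $\bar\varPhi_z$ terms and yields $2\varPhi_{\bar z} + \{\varPhi,\bar\varPhi\} = [\varPhi,\bar\varPhi]$, which is exactly \eqref{harm+integ}.

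For the converse, I am given a $\mathfrak g$-valued $1$-form $\alpha = \varPhi\,dz + \bar\varPhi\,d\bar z$ on a simply connected $\mathbb D$ satisfying \eqref{conformalimmersion} and \eqref{harm+integ}, and I want to produce a conformal minimal immersion $f$. The strategy is to first recover the integrability and harmonicity as \emph{separate} equations by splitting \eqref{harm+integ} back into its symmetric and antisymmetric parts. Observe that $\{\varPhi,\bar\varPhi\}$ is symmetric in its two arguments while $[\varPhi,\bar\varPhi]$ is antisymmetric; more precisely, since $\alpha$ is $\mathfrak g$-valued we have that $\bar\varPhi$ is the complex conjugate of $\varPhi$, so taking the complex conjugate of \eqref{harm+integ} gives $2\bar\varPhi_z + \{\bar\varPhi,\varPhi\} = [\bar\varPhi,\varPhi]$. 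Adding this conjugate equation to \eqref{harm+integ} and using the symmetry $\{\varPhi,\bar\varPhi\} = \{\bar\varPhi,\varPhi\}$ together with $[\bar\varPhi,\varPhi] = -[\varPhi,\bar\varPhi]$ recovers the harmonic map equation \eqref{HME}, while subtracting recovers the Maurer-Cartan equation \eqref{basic integrability 3}.

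Once the Maurer-Cartan equation \eqref{basic integrability 3} is in hand, together with the first conformality condition $\langle\varPhi,\varPhi\rangle = 0$ (which is \eqref{basic integrability 2} written in terms of the orthonormal expansion $\varPhi = \sum_k \phi_k e_k$), I can invoke Proposition \ref{prop:Integrablity} directly: on the simply connected domain $\mathbb D$, for any prescribed initial value in $G$ there exists a unique conformal immersion $f:\mathbb D\to G$ with $f^{-1}f_z = \varPhi$, hence $f^{-1}df = \alpha$. It then remains to confirm that this $f$ is minimal. For this I would run the argument of the forward direction in reverse: since $f$ is a conformal immersion, the mean curvature formula \eqref{eq:anotherstructure} holds, and subtracting the Maurer-Cartan equation from \eqref{harm+integ} shows that the left-hand side $\varPhi_{\bar z} + \bar\varPhi_z + \{\varPhi,\bar\varPhi\}$ vanishes, so $e^u f^{-1}\boldsymbol H = 0$; since $e^u > 0$, this forces $\boldsymbol H = 0$ and $f$ is minimal.

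The step I expect to need the most care is the bookkeeping around the symmetric/antisymmetric splitting in the converse: I must verify cleanly that $\{\cdot,\cdot\}$ is genuinely symmetric (immediate from its definition \eqref{anticommutator}) and that conjugation of \eqref{harm+integ} behaves correctly, so that the two recovered equations \eqref{HME} and \eqref{basic integrability 3} are exactly the separate conditions needed. No single step is a serious obstacle, since all the analytic content is already packaged in Proposition \ref{prop:Integrablity} and the mean curvature formula \eqref{eq:meancurvature}; the theorem is really a repackaging that combines the integrability and harmonicity conditions into the single tidy equation \eqref{harm+integ}.
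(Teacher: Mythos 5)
Your proposal is correct and follows essentially the same route as the paper's own proof: the converse is handled by adding and subtracting the complex conjugate of \eqref{harm+integ} to recover the harmonicity condition \eqref{HME} and the integrability condition \eqref{basic integrability 3}, then invoking Proposition \ref{prop:Integrablity} and concluding minimality from harmonicity (your closing step via \eqref{eq:anotherstructure} and $e^u f^{-1}\boldsymbol H = 0$ is just the explicit form of the paper's remark that a harmonic conformal immersion is minimal). The forward direction you spell out is treated in the paper as the preceding discussion being summarized, so nothing is missing or different in substance.
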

\begin{proof}
 Let $\alpha = \varPhi{d}z+\bar {\varPhi}d\bar{z}$ be a
  $\mathfrak g$-valued $1$-form satisfying {\rm(\ref{conformalimmersion})} 
 and {\rm(\ref{harm+integ})}. 
 Then subtraction and addition of the complex conjugate 
 of \eqref{harm+integ} to itself
 gives the integrability condition \eqref{basic integrability 3} 
 and the harmonicity condition \eqref{HME}, respectively.
 Hence Proposition \ref{prop:Integrablity} implies 
 that there exists a conformal immersion $f$
 such that $f^{-1} d f =\alpha$. Since $f$ is harmonic, it is minimal.
\end{proof}
 In the study of harmonic maps of Riemann surfaces into 
 compact semi-simple Lie groups equipped with a
 bi-invariant Riemannian metric, the zero curvature representation is 
 the starting point of the loop group approach, see Segal \cite{Segal}, 
 Uhlenbeck \cite{Uhlenbeck}.  
 In case the metric on the target Lie group 
 is only left invariant we need to require the additional condition 
 $$
 \{\varPhi, \bar{\varPhi} \} =0,
 $$ 
 the so-called \textit{admissibility condition}. 

 It should be remarked that all the examples of minimal surfaces in 
 $\mathrm{Nil}_3$ studied in this paper do not satisfy the admissibility  
 condition. Thus we can not expect to generalize the Uhlenbeck-Segal 
 approach for harmonic maps into compact semi-simple Lie groups to maps 
 into more general Lie groups in a straightforward manner.
\section{The three-dimensional Heisenberg group $\mathrm{Nil}_3$}\label{sc:Preliminaries2}
\subsection{}
 We define a $1$-parameter family
 $\{\mathrm{Nil}_{3}(\tau)\}_{\tau \in \mathbb{R}}$ of $3$-dimensional 
 Lie groups 
 $$
\mathrm{Nil}_{3}(\tau)
=(\mathbb{R}^{3}(x_1,x_2,x_3),\cdot)
 $$
 with multiplication:
 $$
(x_1,x_2,x_3)\cdot
(\tilde{x}_1,\tilde{x}_2,\tilde{x}_3)
=
(x_{1}+\tilde{x}_1,
x_{2}+\tilde{x}_2,
x_{3}+\tilde{x}_3
+\tau
(x_1\tilde{x}_{2}-\tilde{x}_{1}x_{2})
).
 $$
 The unit element $\id$ 
 of $\mathrm{Nil}_3(\tau)$ is 
 $(0,0,0)$.
 The inverse element of $(x_1,x_2,x_3)$ is
 $-(x_1,x_2,x_3)$.
 Obviously, $\mathrm{Nil}_3(0)$ is the abelian group $(\mathbb{R}^3,+)$.
 The groups $\mathrm{Nil}_3(\tau)$ and $\mathrm{Nil}_3(\tau^{\prime})$
 are isomorphic if $\tau \tau^{\prime} \neq 0$.
 
\subsection{}
 The Lie algebra $\mathfrak{nil}_{3}(\tau)$ of $\mathrm{Nil}_3(\tau)$ is
 $\mathbb{R}^3$ with commutation relations:
\begin{equation}\label{1.1}
[e_1,e_2]=2\tau e_{3},
\ \
[e_2,e_3]=[e_3,e_1]=0
\end{equation}
 with respect to the natural basis 
 $e_{1}=(1,0,0)$, $e_{2}=(0,1,0)$, $e_{3}=(0,0,1)$.
 The formulas (\ref{1.1})
 imply that $\mathfrak{nil}_3(\tau)$ is nilpotent.
 The respective left translated vector fields of 
 $e_1$, $e_2$ and $e_3$ are
$$
E_{1}=\partial_{x_1}-\tau{x_2}
\partial_{x_3},
\
E_{2}=\partial_{x_2}+\tau{x_1}
\partial_{x_3}
\;\;
\mbox{and}
\;\;
E_{3}=\partial_{x_3}.
$$

 We define an inner product $\langle\cdot,\cdot\rangle$
 on $\mathfrak{nil}_3(\tau)$ so that
 $\{e_1,e_2,e_3\}$ is orthonormal with respect to it.
 Then the resulting 
 left-invariant Riemannian metric 
 $ds^2_\tau=\langle\cdot,\cdot\rangle_{\tau}$ 
 on $\mathrm{Nil}_3(\tau)$ is
\begin{equation}\label{1.2}
ds_{\tau}^2=(dx_1)^{2}+(dx_{2})^{2}+\omega_\tau
\otimes
\omega_\tau,
\end{equation}
 where
\begin{equation}\label{eq:omegatau}
\omega_\tau=dx_{3}+\tau(x_{2}dx_{1}-x_{1}dx_{2}).
\end{equation}
The $1$-form $\omega_\tau$ satisfies 
$d\omega_\tau \wedge \omega_\tau=-2\tau\> dx_{1}\wedge dx_{2}\wedge dx_{3}$.
Thus $\omega_\tau$ is a 
\textit{contact form} on $\mathrm{Nil}_3(\tau)$ if and only if $\tau\not=0$.

 The homogeneous Riemannian $3$-manifold 
 $(\mathrm{Nil}_3(\tau),ds_{\tau}^2)$ 
 is called the three-dimensional 
 \textit{Heisenberg group} if $\tau\not=0$.
 Note that $(\mathrm{Nil}_3(0),ds_0^2)$ 
 is the Euclidean $3$-space $\mathbb{E}^3$. 
 The homogeneous Riemannian $3$-manifold $(\mathrm{Nil}_3(1/2),ds_{1/2}^2)$ is 
 frequently referred to as the model space $\mathrm{Nil}_{3}$
 of the nilgeometry in the sense of Thurston, \cite{Thurston}.

\begin{Remark}
For $\tau\not=0$, $(\mathrm{Nil}_{3}(\tau),\omega_\tau)$ is a 
contact manifold, and the unit Killing vector field $E_3$ is 
the \textit{Reeb vector field} of this contact manifold. 
In particular $\mathrm{Nil}_3(1)$ is 
isometric to the \textit{Sasakian space form} 
$\mathbb{R}^{3}(-3)$ in the sense of contact Riemannian geometry,
\cite{Boyer, BG}.
\end{Remark}
 We orient $\mathrm{Nil}_{3}(\tau)$ 
 so that $\{E_1,E_2,E_3\}$ is a positive 
 orthonormal frame field. 
 Then the volume element $dv_\tau$ of 
 the oriented Riemannian $3$-manifold 
 $\mathrm{Nil}_{3}(\tau)$ 
 with respect to the metric $ds_\tau^2$ is 
 $dx_1\wedge dx_{2}\wedge dx_3$.
 The \textit{vector product operation} $\times$ with respect to 
 this orientation is defined by
$$
\langle X\times Y,Z\rangle_{\tau}=dv_\tau(X,Y,Z)
$$
 for all vector fields $X$, $Y$ and $Z$ on $\mathrm{Nil}_{3}(\tau)$. 

\subsection{}
The nilpotent Lie group $\mathrm{Nil}_3(\tau)$ is realized as a closed
subgroup of the general linear group $\mathrm{GL}_{4}\mathbb{R}$.
In fact, $\mathrm{Nil}_3(\tau)$ is imbedded in $\mathrm{GL}_{4}\mathbb{R}$ by
$\iota:\mathrm{Nil}_3(\tau)\to\mathrm{GL}_{4}\mathbb{R}$;
$$
 \iota(x_1,x_2,x_3)=
 e^{x_1}E_{11} + \sum_{i=2}^{4}E_{ii} + 2 \tau x_1 E_{23} 
 + (x_3 + \tau x_1 x_2)E_{24}+x_2 E_{34},
$$
 where $E_{ij}$ are $4$ by $4$ matrices with the $ij$-entry $1$,
 and all others $0$. Clearly $\iota$ is an injective Lie group 
 homomorphism. Thus  $\mathrm{Nil}_3(\tau)$ is identified with
 $\{\iota(x_1,x_2,x_3)\ \vert \ x_1,x_2,x_3 \in \mathbb{R}\}
 = \mathrm{Nil}_3(\tau)$.
 The Lie algebra $\mathfrak{nil}_3(\tau)$ 
 corresponds to
$$
\left\{
u_1 E_{11} + 2 \tau u_1 E_{23} + u_3 E_{24} +u_2E_{34}
\
\vert
\
u_1,u_2,u_3
\in \mathbb{R}
\right\}.
$$
 The orthonormal basis $\{e_1,e_2,e_3\}$ is identified with
$$
 e_{1}= E_{11} + 2 \tau E_{23}, \ e_{2}=  E_{34}\;\;\mbox{and}\;\;e_{3}=E_{24}.
$$
 The exponential map 
 $\exp:\mathfrak{nil}_{3}(\tau)\to \mathrm{Nil}_{3}(\tau)$ is given 
 explicitly by  
\begin{equation}\label{exp-map}
 \exp(x_{1}e_{1}+x_{2}e_{2}+x_{3}e_{3})=
 e^{x_1} E_{11} + \sum_{i=2}^{4}E_{ii} +2 \tau x_1 E_{23} 
 +(x_3 + \tau x_1 x_2) E_{24} + x_2 E_{34}.
\end{equation}
 This shows that $\exp$ is a diffeomorphism. Moreover the 
 inverse mapping $\exp^{-1}$ can be identified with the 
 global coordinate system $(x_1,x_2,x_3)$ of $\mathrm{Nil}_{3}(\tau)$.
 The coordinate system $(x_1,x_2,x_3)$ is called the 
 \textit{exponential coordinate system} of $\mathrm{Nil}_{3}(\tau)$.
 In this coordinate system the exponential map is the identity map.

\subsection{}
The Levi-Civita connection
$\nabla$ of $ds_\tau^2$ is
given by 
\begin{equation}\label{eq:Levi-Civita}
\begin{split}
\displaystyle 
\nabla_{e_1}{e_1}=0,\ \ 
\nabla_{e_1}{e_2}=\tau\>e_{3},\ \ 
\nabla_{e_1}{e_3}=-\tau\>e_{2},  \\
\nabla_{e_2}{e_1}=-\tau\>e_{3},\ \ 
\nabla_{e_2}{e_2}=0,\ \ 
\nabla_{e_2}{e_3}=\tau\>e_{1}, \\
\nabla_{e_3}{e_1}=-\tau\>e_{2},\ \ 
\nabla_{e_3}{e_2}=\tau\>e_{1},\ \ 
\nabla_{e_3}{e_3}=0.
\end{split}
\end{equation}
 The Riemannian curvature tensor 
 $R$ defined by $R(X,Y)=[\nabla_X,\nabla_Y]-\nabla_{[X,Y]}$ is given by
\begin{align*}
 R(X,Y)Z &= -3\tau^2
 \left(\langle Y,Z\rangle_{\tau}{X}-\langle Z,X\rangle_{\tau}{Y}\right)\\
 &\phantom{=}+ 4\tau^{2}\left(
 \omega_{\tau}(Y)\omega_{\tau}(Z)X-\omega_{\tau}(Z)\omega_{\tau}(X)Y\right)
 \\
 &\phantom{=}+4\tau^{2} 
 \left(\omega_{\tau}(X)\langle Y,Z\rangle_{\tau}-
 \omega_{\tau}(Y)\langle Z,X\rangle_{\tau}\right){e_3}.
 \end{align*}
 The Ricci tensor field $\mathrm{Ric}$ is given by
$$
\mathrm{Ric}=-\tau^2\langle\cdot,\cdot 
\rangle_{\tau}+4\tau^2\omega_{\tau} \otimes \omega_{\tau}.
$$
 The scalar curvature of $\mathrm{Nil}_{3}(\tau)$ is $\tau^{2}$.
 The symmetric bilinear map $\{\cdot, \cdot\}$ defined in \eqref{eq:anotherstructure}
 is explicitly given by 
\begin{equation}\label{2.11}
\{e_1,e_2\} =0,\ \
\{e_1,e_3\}=-2\tau e_{2}\ \ \mbox{and} \ \
\{e_2,e_3\}= 2\tau e_{1}.
\end{equation}
 Note that $\{\cdot,\cdot\}$ measures the 
 non right-invariance of the metric.
 In fact $\{\cdot, \cdot\}=0$ if and only if
 $ds_\tau^2$ is right invariant (and hence bi-invariant).
 The formulas (\ref{2.11}) imply that
 $ds_\tau^2$ is bi-invariant only when $\tau=0$.

\section{Surface theory in $\mathrm{Nil}_3$}\label{sc:SurfaceTheory}
\subsection{}
 Hereafter we study $\mathrm{Nil}_{3}(1/2)$ 
 for simplicity and denote the 
 space by $\mathrm{Nil}_3$. The metric $ds^2_{1/2}$ is simply 
 denoted by $ds^2=\langle\cdot,\cdot\rangle$ and 
 $\omega_{1/2}$ of \eqref{eq:omegatau} by $\omega$.

 Let $f:M\to \mathrm{Nil}_3$ be an immersion of a
 $2$-manifold. Our main interests are surfaces of 
 constant mean curvature 
 (and in particular minimal surfaces). 
 In the case where a surface has nonzero constant mean curvature, 
 we can assume without loss of generality that 
 $M$ is orientable and $f$ is a conformal
 immersion of a Riemann surface. 
 In the minimal surface case, if necessary, taking a double covering, 
 we may assume that $f$ is an orientable conformal immersion of 
 a Riemann surface. 
 Clearly, $\mathrm{Nil}_3$ is a three-dimensional Riemannian 
 spin manifold. Thus $f$ induces a spin structure on $M$. 
 Hereafter we will always use the induced spin structure on $M$.   

 As in Section \ref{sc:Preliminaries}, 
 we consider the $1$-form $\varPhi{d}z$ on a simply connected domain 
 $\mathbb{D}\subset M$ that takes values 
 in the complexification $\mathfrak{nil}_3^{\mathbb{C}}$ of the 
 Lie algebra $\mathfrak{nil}_3$. With respect to the natural 
basis $\{e_1,e_2,e_3\}$ of $\mathfrak{nil}_3$, we expand $\varPhi$ as 
$\varPhi =\sum_{k=1}^3 \phi_k e_k$
 and assume that \eqref{basic integrability 2} 
 and \eqref{basic integrability 2-b} are satisfied. 
 Then there exist
 complex valued functions
 $\psi_1$ and $\psi_2$ such that
 \footnote{In \cite[(8)]{BT:Sur-Lie}, the spinor
 representation  
 \begin{equation*}
 \phi_1 = \frac{i}{2}(\overline{\psi_2}^2 + \psi_1^2), \;\;
 \phi_2 = \frac{1}{2}(\overline{\psi_2}^2 - \psi_1^2), \;\;
 \phi_3 = \psi_1 \overline{\psi_2}
 \end{equation*}
 is used. The correspondence to ours representation is
 $$\psi_j \to \sqrt{2} \psi_j, \;  \;\;\mbox{and}\;\;
 e_1 \to e_2, \;\; e_2 \to e_1.
 $$ 
 Thus the sign of the unit normal also changes.}
 \begin{equation}\label{eq:phi}
 \phi_1 = (\overline{\psi_2})^2 - \psi_1^2, \;\;
 \phi_2 = i ((\overline{\psi_2})^2 + \psi_1^2), \;\;
 \phi_3 = 2 \psi_1 \overline{\psi_2},
 \end{equation}
 where $\overline{\psi_2}$ denotes the complex conjugate of $\psi_2$.
 It is easy to check that 
 $\psi_{1}\sdz$ and $\psi_{2}\sdzb$ 
 are well defined on $M$. 
 More precisely, $\psi_{1}\sdz$ and $\psi_{2}\sdzb$ are respective 
 sections  of the spin bundles $\varSigma$ and $\bar{\varSigma}$ over $M$, 
 see Appendix \ref{sc:spin}.
 The sections $\psi_{1}\sdz$ and 
 $\psi_{2}\sdzb$ are called the 
 \textit{generating spinors} of the conformally immersed 
 surface $f$ in $\mathrm{Nil}_3$. 
 The coefficient functions
 $\psi_1$ and $\psi_2$ are also called the generating spinors
 of $f$,  see \cite{BT:Sur-Lie}. Note that after a change of 
 coordinates the new generating spinors $\varphi_1, \varphi_2$ 
 are $\varphi_1(w) = \sqrt{z_{w}} \psi_1(z(w))$ and 
 $\varphi_2(w)=\sqrt{\bar z_w} \psi_2 (z(w))$.

 We would like to note that from this representation of
 $\varPhi$ it is straightforward to verify $f$ has 
 a branch point, that is $\varPhi$ and $\bar \varPhi$ 
 are linearly dependent, if and only if $\psi_1=\psi_2=0$ at the 
 point. Sometimes, we consider conformal immersions
 with branch points.
 Since we are interested in immersions, we will 
 only admit a nowhere dense set of branch points, if any.

 The conformal factor $e^{u}$ of the 
 induced metric $\langle df,df\rangle$ 
 can be expressed  by the spinors  $\psi_1, \psi_2$ via 
 formula \eqref{basic integrability 2-b}:
 \begin{equation}\label{eq:metric}
 e^{u} =4 (|\psi_1|^2+|\psi_2|^2)^2.
 \end{equation}

\begin{Remark}\label{rm:psizero}
 Let $f:M\to \mathrm{Nil}_3$ be a conformal immersion. 
 Then $\phi_3=2\psi_{1}\overline{\psi_2}$ can not vanish identically on 
 any open subset of $M$.
 In fact, if $\phi_3=0$, then $f$ is normal to $E_3$ everywhere. 
 Namely, $f$ is an integral surface of the contact distribution 
 defined by 
 the equation $\omega=0$. 
 However, since $\omega$ is a contact form on $\mathrm{Nil}_3$,
 this is impossible. (The maximum dimension of an integral manifold 
 is one.) In particular, for every conformal immersion $f: M \to 
 \Nil$, there exists an open and dense subset $M_f$ on which $\psi_1 \neq 
 0$ and $\psi_2 \neq 0$.    
\end{Remark}
\begin{Example}[Vertical plane]
Let $\Pi$ be an affine plane in $\mathrm{Nil}_3$ defined by 
$$
\Pi=\{(x_1,x_2,x_3) \ \vert \ ax_{1}+bx_{2}+c=0\}
$$ for some constants $a$, $b$ and $c$. 
Such a plane is called a \textit{vertical plane} in 
$\mathrm{Nil}_3$. One can see that every vertical plane is 
minimal in $\mathrm{Nil}_3$. Vertical planes are homogeneous and 
minimal Hopf cylinders. 
See Proposition \ref{homogenroussurfaces} and Theorem \ref{thm:ARholo}. 
Vertical planes are minimal and flat, but not totally geodesic. 
It should be emphasized that there are no totally umbilical surfaces
in $\mathrm{Nil}_3$, see \cite{Sanini, IKN}.
\end{Example}

\subsection{}
 Let $N$ denote the positively oriented 
 unit normal vector field along $f$.
 We then define an unnormalized normal vector field $L$ by
\begin{equation}\label{eq:unnormalizednormal}
 L = e^{u/2} N.
\end{equation}
 Note that $L\sdz \sdzb$ 
 is well defined on $M$. We call this section 
 the \textit{normal} of $f$.
 We also note that $e^{u/2} L = e^{u} N$ is given by 
 the vector product $f_x \times f_y$.

 Moreover, from \eqref{eq:phi}, the left translated vector field 
 $f^{-1}N$ 
 of the unit normal $N$ to $\mathfrak{nil}_3$
 can be represented by the spinors  $\psi_1$ and $\psi_2$:
 \begin{equation}\label{eq:Nell}
 f^{-1} N = \frac{1}{|\psi_1|^2+ |\psi_2|^2}
 \left( 
 2 \Re(\psi_1 \psi_2) e_1 +2 \Im(\psi_1 \psi_2)e_2 + 
 (|\psi_1|^2 -|\psi_2|^2)e_3
 \right),
 \end{equation}
 where $\Re$ and $\Im$ denote the real and 
 the imaginary parts of a complex number.
 Accordingly, the left translation of the unnormalized normal 
 $f^{-1} L = e^{u/2} f^{-1}N$ can be computed as 
%\footnote{
% In \cite[(10)]{BT:Sur-Lie}, one obtains
% \begin{equation*}
% f^{-1} N = e^{-\alpha}
% \left(-\Im(\psi_1 \psi_2) e_1 - 2 \Re(\psi_1 \psi_2)e_2 + 
% (|\psi_2|^2 -|\psi_1|^2)e_3\right),
% \end{equation*}
% where $e^{2 \alpha} dz d\bar z$  is the metric. }
%
 \begin{equation}\label{eq:L}
 f^{-1} L = 4 \Re(\psi_1 \psi_2) e_1 + 4 \Im(\psi_1 \psi_2)e_2 + 
 2 (|\psi_1|^2 -|\psi_2|^2)e_3.
 \end{equation}
 We define a function $h$ by
 $$
 h =\langle f^{-1} L, e_3 \rangle  =  2(|\psi_1|^2-|\psi_2|^2).
 $$ 
 Then we get a section $h\sdz\sdzb$ of 
 $\varSigma \otimes \bar{\varSigma}$. 
 This section is called the \textit{support} of $f$.  
 The coefficient function $h$ is called the \textit{support function} 
 of $f$ with respect to $z$. 

\begin{Remark}
 Let us denote by $\vartheta$ the angle between $N$ and the Reeb vector field $E_3$, then 
 $h$ is represented as $h=e^{u/2}\cos \vartheta$. The angle function $\vartheta$ is called 
 the \textit{contact angle} of $f$. One can check that 
 $h\sdz\sdzb=\cos \vartheta|df|$. Here $|df|=e^{u/2}\sdz\sdzb$ is a half density
 on $M$.
\end{Remark}
 From \eqref{eq:L}, we obtain the following
 Proposition.
\begin{Proposition}\label{heightzero}
 For a surface $f:\mathbb{D}\to \mathrm{Nil}_{3}$, the following 
 properties are equivalent{\rm:}
 \begin{enumerate}
 \item $f$ has the support function equal 
 zero at $p$ in $\mathbb D$, that is, 
 the support function $h$ of $f$ vanishes at $p$, $h(p) =0$.

 \item $E_3$ is tangent to $f$ at $p$.
 \end{enumerate}
\end{Proposition}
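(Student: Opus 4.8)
The plan is to prove the equivalence of the two stated conditions by reducing both to the vanishing of the same quantity, namely the $e_3$-component of the left-translated normal. The key observation is that the support function $h$ was \emph{defined} precisely as $h = \langle f^{-1}L, e_3\rangle$, where $L = e^{u/2}N$ is the unnormalized normal from \eqref{eq:unnormalizednormal}. Since $e^{u/2} > 0$ everywhere (the immersion is nondegenerate), the vanishing $h(p) = 0$ is equivalent to $\langle f^{-1}N, e_3\rangle = 0$ at $p$. Thus the whole statement hinges on interpreting this inner product geometrically.

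For the forward direction, I would argue as follows. The left-translation by $f(p)^{-1}$ is an isometry of the left-invariant metric, so $\langle f^{-1}N, e_3\rangle = \langle N, (L_{f(p)})_* e_3\rangle$ evaluated at the image point. The left-invariant vector field generated by $e_3$ is exactly $E_3 = \partial_{x_3}$, which is the Reeb field and is globally defined. Hence $\langle f^{-1}N, e_3\rangle$ equals the pointwise inner product $\langle N, E_3\rangle$ of the unit normal with the Reeb vector field at $f(p)$. If $h(p) = 0$, then $\langle N, E_3\rangle(p) = 0$, meaning $E_3$ is orthogonal to the normal $N$ of the surface at that point, i.e. $E_3$ lies in the tangent plane $df(T_pM)$. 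This is precisely condition (2). The converse runs identically: if $E_3$ is tangent to $f$ at $p$, then it is orthogonal to the unit normal $N$, so $\langle N, E_3\rangle(p) = 0$, and multiplying by the positive factor $e^{u/2}$ gives $h(p) = 0$.

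Concretely I would extract the needed identity directly from \eqref{eq:L}: we have $f^{-1}L = 4\Re(\psi_1\psi_2)e_1 + 4\Im(\psi_1\psi_2)e_2 + 2(|\psi_1|^2 - |\psi_2|^2)e_3$, so the $e_3$-coefficient is $2(|\psi_1|^2 - |\psi_2|^2) = h$, confirming the definition. The content is then that $\{e_1,e_2,e_3\}$ is orthonormal, so the inner product with $e_3$ picks out exactly the third coordinate, and that the tangent space is the orthogonal complement of $N$. The only point requiring care is the identification $\langle f^{-1}N, e_3\rangle = \langle N, E_3\rangle$ at $f(p)$, which uses left-invariance of the metric together with the fact that the left-translate of the constant $e_3$ is the Reeb field $E_3$.

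The main obstacle, such as it is, is essentially bookkeeping rather than mathematical depth: one must be careful to distinguish the Lie-algebra element $e_3 \in \mathfrak{nil}_3$ (used in the spinor formulas via $f^{-1}N$) from the left-invariant vector field $E_3 = \partial_{x_3}$ on the manifold, and to verify that the left-translation isometry correctly intertwines the two inner products. Once this identification is in place, both implications are immediate from the positivity of $e^{u/2}$ and the nondegeneracy of the induced metric, so no genuinely hard step remains.
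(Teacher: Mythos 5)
Your proof is correct and follows essentially the same route as the paper, which simply reads off from \eqref{eq:L} that $h$ is the $e_3$-component of $f^{-1}L = e^{u/2} f^{-1}N$ and concludes tangency of $E_3$ from its vanishing. The left-invariance bookkeeping you spell out (identifying $\langle f^{-1}N, e_3\rangle$ with $\langle N, E_3\rangle$ at $f(p)$) is exactly what the paper leaves implicit.
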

 Let $\pi: \Nil \to \R^2$ be the natural projection defined by 
 $\pi (x_1, x_2, x_3) = (x_1, x_2)$. We define a
 \textit{Hopf cylinder}
 by the inverse image of a plane curve under the projection $\pi$. 
 Hopf cylinders are flat and its mean curvature is 
 half of the curvature of the base curve.

 It is clear from the definition that 
 surfaces tangent to $E_3$ are Hopf cylinders, 
 \cite{BDI}. 
 Thus a surface which has zero support, 
 that is $h\equiv 0$, is a Hopf cylinder. 
 
 For later purpose we list some notion:
 A surface is called {\it vertical at $p$} in $M$ 
 if $E_3$ is tangent to $f$ at $p$ in $M$. A surface is 
 {\it vertical}, if it is vertical at all points $p$ in $M$. 
 A surface is called {\it nowhere vertical} if 
 it is nowhere tangent to $E_3$.
\subsection{} 
 Conformal immersions into $\Nil$ are
 characterized by the integrability condition
 \eqref{basic integrability 3} and the structure equation 
 \eqref{eq:anotherstructure}. Note, since the target space $\Nil$ 
 is three-dimensional, the mean curvature vector field $\boldsymbol H$
 in  \eqref{eq:anotherstructure} can be represented as 
 $$
 \boldsymbol H = H N,
 $$ 
 where $H$ is the mean curvature and $N$ is the unit normal.
 These equations are given by six equations
 for the functions $\phi_1$, $\phi_2$ and $\phi_3$ or, equivalently, 
 for the generating spinors $\psi_1$ and $\psi_2$,  see 
 \cite[$(18)$]{BT:Sur-Lie}. 
 Then the equations \eqref{basic integrability 3} and \eqref{eq:anotherstructure}
 are equivalent to the following \textit{nonlinear Dirac equation}, 
 that is,  
 \begin{equation}\label{Dirac1}
\slashed{D} \begin{pmatrix} 
\psi_1
\\ 
\psi_2
\end{pmatrix} 
:=
\begin{pmatrix}
\partial_{z}\psi_{2}+\mathcal{U}\psi_1
\\
-\partial_{\bar z}\psi_1+\mathcal{V}\psi_2
\end{pmatrix} 
=
\left(
\begin{array}{c}
0
\\
0
\end{array}
\right),
\end{equation}
 where 
 \begin{equation}\label{Dirac2}
 \mathcal U = \mathcal V = - \frac{H}{2}e^{u/2} + \frac{i}{4}h.  
% \footnote{The potential in \cite{BT:Sur-Lie} and ours differ by $-2$.}
 \end{equation}
 Here $H$, $e^{u}$ and $h$ are  
 the mean curvature, the conformal factor 
 and the support function for $f$
 respectively. More precisely by Remark \ref{rm:psizero},
 we have $\psi_1 \psi_2 \neq 0$ on an open dense subset $M_f$, 
 and on this subset, we show that \eqref{basic integrability 3} 
 and \eqref{eq:anotherstructure} together are equivalent with 
 the nonlinear Dirac equation. Thus we extend to $M$ by continuity.
 The complex function $\mathcal U (=\mathcal V)$ 
 is called the \textit{Dirac potential} of the nonlinear 
 Dirac operator $\slashed{D}$.

\begin{Remark}

\mbox{}
\begin{enumerate}
\item
 The above equivalence can be seen explicitly as follows:
 The coefficients of $e_1, e_2$ and $e_3$ in \eqref{basic integrability 3} 
 and \eqref{eq:anotherstructure} give six equations. 
 The equations given by the respective coefficients of $e_1$ and 
 $e_2$ in \eqref{basic integrability 3}
 and \eqref{eq:anotherstructure} together are equivalent to 
 the nonlinear Dirac equation. 
 Conversely, the equations given by the coefficients of $e_3$ 
 in \eqref{basic integrability 3} and \eqref{eq:anotherstructure}
 follow from  the nonlinear Dirac equation.
 Therefore the nonlinear Dirac equation is equivalent to 
 the integrability equation \eqref{basic integrability 3}
 together with the structure equation \eqref{eq:anotherstructure}. 

\item 
 To prove  the equations 
 \eqref{basic integrability 3} and \eqref{eq:anotherstructure} 
 from the nonlinear Dirac equation, we can choose 
 a real-valued function $H$ freely, 
 however, $e^{u/2}$ and $h$, which are the functions in the Dirac potential 
 $\mathcal U(=\mathcal V)$, \eqref{Dirac2}, 
 and solutions $\psi_j, (j=1, 2)$ of the nonlinear Dirac 
 equation need to 
 satisfy the special relation:
 $$
 e^{u/2} = 2(|\psi_1|^2+ |\psi_2|^2) \;\;\mbox{and}\;\; 
 h= 2(|\psi_1|^2 -|\psi_2|^2 ).
 $$
 Under this special condition, we derive the equations 
 \eqref{basic integrability 3} 
 and \eqref{eq:anotherstructure}. Moreover, up to an initial condition, 
 there exists an immersion into $\Nil$
 such that the conformal factor, the mean curvature and support function
 are $e^{u/2}$, $H$ and $h$, respectively.
 \end{enumerate}
\end{Remark}

\subsection{}
 The \textit{Hopf differential} $A \, dz^2$ is 
 the $(2,0)$-part of the second fundamental form for $f: 
 M \to \Nil$ 
 defined by
 $$
 A = \langle \nabla^f_{\partial_z} f_{z}, N \rangle.
 $$
 It is easy to see that $A$ can be expanded as 
 \begin{eqnarray*}
 A = \langle\nabla_{f_z} f_{z}, N \rangle 
   = \langle (f^{-1} f_z)_{z}, f^{-1} N \rangle  
   + \langle \textstyle \sum_{k, j} \phi_k\phi_j\nabla_{e_k} e_j, f^{-1} N \rangle,  
\end{eqnarray*}
 where $\phi_k, \phi_j$ are defined in \eqref{eq:phi}. 
 Then using the formulas in \eqref{eq:Levi-Civita} 
 and the $f^{-1} N$ in \eqref{eq:Nell}, the coefficient function  
 $A$ can be given explicitly as 
\begin{equation}\label{eq:Hopfdiff}
 A =  2 (\psi_1 (\overline{\psi_2})_z - \overline{\psi_2} (\psi_{1})_{z})
      +4 i \psi_1^2 (\overline{\psi_2})^2.
\end{equation}
Next, define $B$ as the complex valued function 
 \begin{equation}\label{eq:ARdiff}
 B = \frac{1}{4}(2 H + i) \tilde A, \;\; 
 \;\;\; \mbox{where} \;\;\; \tilde A = 
 A + \frac{\phi_3^2}{2 H +i}.
 \end{equation}
 Here $A$ and $\phi_3$ are the Hopf differential and 
 the $e_3$-component of $f^{-1} f_{z}$ for $f$ in $\Nil$, 
 respectively.

 The complex quadratic differential $\tilde A\, dz^2$ 
 will be called the \textit{Berdinsky-Taimanov differential} 
 \cite[Lemma 1]{Taimanov:Hei}. 
 Next we recall the \textit{Abresch-Rosenberg differential} of a surface 
 $f:M\to \mathrm{Nil}_{3}(\tau)$. It is the quadratic differential
 $Q dz^2$ given 
 by \cite{Fer-Mira2, Abresch-Rosenberg}:
 $$
 Qdz^2 = 2 (H+ i \tau)Adz^2+4\tau^2 \phi_3^2 \>dz^2. 
 $$
 It is clear that for $\tau =1/2$, the quadratic differential $4 Bdz^2$ 
 is the Abresch-Rosenberg differential.\footnote{
 In \cite[(20)]{BT:Sur-Lie}, 
 $$
 A = (\overline{\psi_2} \psi_{1 z} -\psi_1 \overline{\psi_2}_z)
 + i \psi_1^2 (\overline{\psi_2})^2.
 $$
 In many papers, $A_{AR}:=\tilde{A}dz^2$ is called 
 Abresch-Rosenberg differential. Taimanov uses the notation 
 $B=(2H+i)\tilde{A}/4$ and quotes Berdinsky's paper \cite{Ber:Heisenberg}. 
 Sometimes the differential $Bdz^2$ is called  Abresch-Rosenberg differential, 
 see e.g., \cite{Fer-Mira}.
 } 
\subsection{} 
 We are mainly interested in conformal immersions of constant mean curvature 
 into $\Nil$. Namely, our main 
 interest is the case, where both the Berdinsky-Taimanov differential 
 and the Abresch-Rosenberg differential are holomorphic.
 However, these differentials do not enter the nonlinear Dirac equations
 \eqref{Dirac1} and \eqref{Dirac2}. It is therefore fortunate that
 Berdinsky \cite{Ber:Heisenberg} found another system of partial 
 differential equations for the spinor field $\tilde \psi = (\psi_1, \psi_2)$ of 
 a surface $f$ which is actually equivalent with the nonlinear Dirac equations
 (for a proof see \cite{Taimanov:Hei})
 and where the quadratic differentials enter.
 We define a function $w$ using the Dirac potential $\mathcal U(=\mathcal V)$ 
 as 
 \begin{equation}\label{def-exp(w/2)}
 e^{w/2} =\mathcal{U} 
 = \mathcal V = - \frac{H}{2}e^{u/2} + \frac{i}{4}h.
 \end{equation}
 Here, to define the complex function $w$, 
 we need assume that the mean curvature $H$ and 
 the support function $h$ do not have any common zero.
 For nonzero constant mean curvature surfaces 
 this is no restriction, however, 
 for minimal surfaces, this assumption is equivalent to 
 that $h$ never vanish, that is, surfaces are
 nowhere vertical. 
 By Proposition \ref{heightzero}, minimal immersions with $h =0$ 
 everywhere are exactly the vertical planes.
 In what follows, we will thus always exclude vertical planes 
 from our discussions, and usually also work on open sets not 
 including any point where vertical.
 We can therefore use 
 the representation \eqref{def-exp(w/2)}.
 \begin{Theorem}[\cite{Ber:Heisenberg}]\label{thm:Berdinskysystem}
 Let $\D$ be a simply connected domain in $\C$ and $f: \D \rightarrow \Nil$
 a conformal immersion and $w$ is a complex function defined in 
 \eqref{def-exp(w/2)}. 
 Then the vector $\tilde \psi = (\psi_1, \psi_2)$
 satisfies the system of equations
 \begin{equation}\label{eq:Lax-Niltilde}
 \tilde \psi_z = \tilde \psi \tilde U , \;\;
 \tilde \psi_{\bar z} =  \tilde \psi \tilde V, \;\;
 \end{equation}
 where 
 \begin{equation}\label{eq:U-V1}
 \tilde U =
 \begin{pmatrix}
 \frac{1}{2} w_z  + \frac{1}{2} H_{z} e^{-w/2+u/2}&
 - e^{w/2} \\ 
 B e^{-w/2} & 0
 \end{pmatrix}, \;\;
 \tilde V =
 \begin{pmatrix}
 0 & - \bar B e^{-w/2}\\
 e^{w/2} & \frac{1}{2}w_{\bar z}+\frac{1}{2} H_{\bar z}e^{- w/2 + u/2}
 \end{pmatrix}.
 \end{equation}
 Conversely, every vector solution $\tilde \psi$ 
 to  \eqref{eq:Lax-Niltilde} with  \eqref{def-exp(w/2)} 
 and \eqref{eq:U-V1} is a solution to the nonlinear Dirac equation
 \eqref{Dirac1} with \eqref{Dirac2}. 
% \footnote{In \cite{Ber:Heisenberg}, Berdinsky uses  $v$  and $\alpha$, which 
% corresponds to $w/2$ and $u/2$ in our case.}
\end{Theorem}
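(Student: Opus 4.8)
My plan is to unpack the matrix system \eqref{eq:Lax-Niltilde}--\eqref{eq:U-V1} into its four scalar components, reading $\tilde\psi=(\psi_1,\psi_2)$ as a row vector. Then $\tilde\psi_z=\tilde\psi\tilde U$ yields $(\psi_1)_z=(\tfrac12 w_z+\tfrac12 H_z e^{-w/2+u/2})\psi_1+Be^{-w/2}\psi_2$ and $(\psi_2)_z=-e^{w/2}\psi_1$, while $\tilde\psi_{\bar z}=\tilde\psi\tilde V$ yields $(\psi_1)_{\bar z}=e^{w/2}\psi_2$ and $(\psi_2)_{\bar z}=-\bar B e^{-w/2}\psi_1+(\tfrac12 w_{\bar z}+\tfrac12 H_{\bar z}e^{-w/2+u/2})\psi_2$. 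The first observation is that, since $e^{w/2}=\mathcal U=\mathcal V$ by \eqref{def-exp(w/2)}, the two off-diagonal equations $(\psi_2)_z=-e^{w/2}\psi_1$ and $(\psi_1)_{\bar z}=e^{w/2}\psi_2$ are \emph{verbatim} the two components of the nonlinear Dirac equation \eqref{Dirac1}--\eqref{Dirac2}. This settles two of the four equations in the forward direction, and read backwards it also gives the converse for free: any solution of \eqref{eq:Lax-Niltilde}--\eqref{eq:U-V1} has off-diagonal entries that are precisely $\slashed{D}\tilde\psi=0$.

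The content therefore lies in the two diagonal-column equations, say (E1) for $(\psi_1)_z$ and (E2) for $(\psi_2)_{\bar z}$. The plan is to derive (E1) by differentiating the norm relations coming from \eqref{eq:metric} and $h=2(|\psi_1|^2-|\psi_2|^2)$, namely $4|\psi_1|^2=e^{u/2}+h$ and $4|\psi_2|^2=e^{u/2}-h$, and inserting the Dirac-known derivatives together with their conjugates $(\overline{\psi_1})_z=e^{\overline{w}/2}\overline{\psi_2}$ and $(\psi_2)_z=-e^{w/2}\psi_1$. This gives $(\psi_1)_z\overline{\psi_1}=\partial_z|\psi_1|^2-\psi_1 e^{\overline{w}/2}\overline{\psi_2}$ and an analogous expression for $(\overline{\psi_2})_z$. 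The subtlety is that to recast the answer in terms of $e^{w/2}$ and $B$ (rather than $e^{\overline{w}/2}$) one must bring in the definition of $B$ from \eqref{eq:ARdiff} with $\phi_3=2\psi_1\overline{\psi_2}$ and the Hopf differential \eqref{eq:Hopfdiff}, which expands to $B=\tfrac{2H+i}{2}\psi_1(\overline{\psi_2})_z-\tfrac{2H+i}{2}\overline{\psi_2}(\psi_1)_z+2Hi\,\psi_1^2(\overline{\psi_2})^2$. Since $B$ itself already contains the unknowns $(\psi_1)_z$ and $(\overline{\psi_2})_z=\overline{(\psi_2)_{\bar z}}$, equations (E1) and (E2) form a coupled linear system; I would assemble it from the $B$-relation and the two norm-derivative relations, solve it on the open dense set $M_f$ where $\psi_1,\psi_2\neq0$ (Remark~\ref{rm:psizero}), and extend to all of $\D$ by continuity. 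The crucial point is that the combination of $\partial_z|\psi_1|^2$ and $\partial_z|\psi_2|^2$ reassembles, via \eqref{def-exp(w/2)}, into $\tfrac12 w_z$, and the correction term $\tfrac12 H_z e^{-w/2+u/2}$ is exactly what absorbs the direct appearance of $\partial_z H$ inside $w_z$.

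The hard part will be showing that these manifestly non-holomorphic expressions, with their $1/\overline{\psi_1}$ and $1/\overline{\psi_2}$ factors, collapse into the stated clean coefficients. The key algebraic identity, obtained by substituting \eqref{eq:metric} and $h=2(|\psi_1|^2-|\psi_2|^2)$ into \eqref{def-exp(w/2)}, is
\[
e^{w/2}=-\big(H-\tfrac{i}{2}\big)|\psi_1|^2-\tfrac{2H+i}{2}|\psi_2|^2 .
\]
It produces the cancellation $1+e^{-w/2}\tfrac{2H+i}{2}|\psi_2|^2=-e^{-w/2}\big(H-\tfrac{i}{2}\big)|\psi_1|^2$, which is precisely what is needed to eliminate the $(\psi_1)_z$ hidden inside $B$ and to make the reciprocal-spinor factors cancel, leaving $Be^{-w/2}$ as the coefficient of $\psi_2$ in (E1). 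Identifying the coefficient of $\psi_1$ with $\tfrac12 w_z+\tfrac12 H_z e^{-w/2+u/2}$ then amounts to recognizing $\tfrac12 w_z=(e^{w/2})_z/e^{w/2}$ as a logarithmic derivative and peeling off the $H_z$ term; equation (E2) is handled identically with $z\leftrightarrow\bar z$ and $B\leftrightarrow\bar B$. Controlling the coupled cross-term $(\overline{\psi_2})_z$ through this cancellation—rather than the bookkeeping of the norm derivatives, which is routine once the reassembly into $w_z$ is seen—is where the precise definitions of $B$ and $w$ are indispensable, and is the step I expect to be most delicate (compare the computation in \cite{Taimanov:Hei} cited in the statement).
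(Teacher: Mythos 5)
Your proposal is correct and follows essentially the same route as the paper's proof: the two equations coming from the second column of $\tilde U$ and the first column of $\tilde V$ are verbatim the nonlinear Dirac equation \eqref{Dirac1} (which also yields the converse), while the remaining two are obtained by differentiating $e^{w/2}$ expressed through the spinors, substituting the Dirac-known derivatives $(\psi_2)_z=-e^{w/2}\psi_1$, $(\overline{\psi_1})_z=e^{\bar w/2}\overline{\psi_2}$, and eliminating the cross-term $(\overline{\psi_2})_z$ via the expansion of $B$ from \eqref{eq:ARdiff} together with the identity $e^{w/2}=-\tfrac{2H-i}{2}|\psi_1|^2-\tfrac{2H+i}{2}|\psi_2|^2$, exactly as you outline. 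The only (cosmetic) difference is organizational: the paper multiplies the differentiated relation by $\psi_1$ and substitutes $B$ directly, so it divides only by $e^{w/2}\neq 0$ and never by the spinors, which makes your dense-set $M_f$ plus continuity step unnecessary.
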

\begin{proof}[Sketch of proof]
 Taking the derivative of the potential $\mathcal U = e^{w/2}$ 
 with respect to $z$, we have 
 $$
 \partial_z e^{w/2}
 =  -\frac{H_z}{2} e^{u/2} -\frac{H}{2} (e^{u/2})_z +\frac{i}{4}h_z.
 $$
 Using the explicit formulas for $e^{u/2}$ and $h$ 
 by $\psi_1$ and $\psi_2$, we have 
 $$
\frac{w_z}{2} e^{w/2} 
 = -\frac{H_z}{2}e^{u/2} 
 -\frac{2H+i}{2}\left(
 {\psi_2}_z \overline{\psi_{2}}+\psi_2 (\overline{\psi_{2}})_z\right) 
 -\frac{2H-i}{2}\left({\psi_1}_z \overline{\psi_{1}}+\psi_1 (\overline{\psi_{1}})_z\right). 
 $$ 
 From the nonlinear Dirac equation we can rephrase this as
 $$
\frac{w_z}{2} e^{w/2} 
 = -\frac{H_z}{2}e^{u/2} 
 -\frac{2H+i}{2} \psi_2 (\overline{\psi_2})_z
 -\frac{2H-i}{2}({\psi_1})_z \overline{\psi_{1}}
 -2 iH \psi_1 \overline{\psi_2}|\psi_2|^2.
 $$
 By multiplying the equation above by $\psi_1$ and 
 using the equation $e^{w/2} = -H e^{u/2}/2 + ih/4 =
 -(2H+i)|\psi_2|^2/2 -(2 H-i)|\psi_1|^2/2$, 
 we derive
 $$
 {\psi_1}_z =\left(\frac{w_z}{2}  + \frac{H_z}{2}e^{-w/2+u/2}\right) \psi_1
              + B e^{-w/2} \psi_2.
 $$
 Similarly, ${\psi_2}_z, {\psi_1}_{\bar z}$ and ${\psi_2}_{\bar z}$ 
 can be computed by the nonlinear Dirac equation.
 
 Conversely, let $\tilde \psi$ be a vector solution 
 of \eqref{eq:Lax-Niltilde}. Then the second column in $\tilde U$ and 
 the first column in $\tilde V$ produce the nonlinear Dirac equation.
\end{proof}
 The compatibility condition for \eqref{eq:Lax-Niltilde}, that is 
 $\tilde U_{\bar z} -\tilde V_{z} + [\tilde V, \tilde U] =0$, 
 gives the Gauss-Codazzi equations
 of a surface $f:\mathbb{D}\to \mathrm{Nil}_3$. 
 These are four equations, one for each matrix entry. 
 We obtain 
 \begin{equation}\label{GaussEquation}
 \frac{1}{2} w_{z \bar z}+e^{w}-|B|^2e^{-w}+\frac{1}{2}(H_{z \bar z}+p)e^{-w/2+u/2}=0,
\end{equation}
 where $p$ is $H_z (-w/2 +u/2)_{\bar z}$ for the $(1, 1)$-entry and
 $H_{\bar z} (-w/2 +u/2)_{z}$ for the $(2, 2)$-entry, respectively.
 Moreover, the remaining two equations are
 \begin{equation}\label{CodazziEquation}
\begin{split}
 \bar B_z  e^{-w/2}= -\frac{1}{2} \bar B H_z e^{-w+u/2} 
                    -\frac{1}{2} H_{\bar z}e^{u/2}, \\
B_{\bar z} e^{-w/2} = -\frac{1}{2} B H_{\bar z} e^{-w+u/2} 
                    -\frac{1}{2} H_{z}e^{u/2}.
\end{split}
 \end{equation}
 The Codazzi equations \eqref{CodazziEquation} 
 imply that $B$ is holomorphic if the surface is of constant mean curvature.
 However, we should emphasize that the holomorphicity of $B$ does 
 not imply  the constancy of the mean curvature. This situation is very
 different from the case of space forms.
 For a precise statement we refer to Appendix \ref{sc:Appendix}.

\begin{Remark}
 Let $w, B, H$ be solutions to the Gauss-Codazzi equations 
 \eqref{GaussEquation} and \eqref{CodazziEquation}.
 To obtain the immersion into $\Nil$, a vector solution $\tilde \psi
 = (\tilde \psi_1, \tilde \psi_2)$
 of \eqref{eq:Lax-Niltilde} is not enough; the complex function $e^{w/2}$
 also needs to satisfy
 $$
 e^{w/2} = - H(|\tilde \psi_1|^2+|\tilde \psi_2|^2) 
           + \frac{i}{2} (|\tilde \psi_1|^2-|\tilde \psi_2|^2).
 $$
 In Proposition \ref{prop:Ber-over} for constant mean curvature surfaces,  
 this will be rephrased in terms of equations for $w, H$ and $B$.
\end{Remark}
%%%%%%%%%%%%%%%%%%%%%%%%%%%%%%%%%%%%%%%%%%%%%%%%%%%%%%%
\section{Constant mean curvature surfaces in $\Nil$}\label{sc:CMCinNil}
\subsection{}
 Let $f$ be a conformal immersion in $\Nil$ as in the 
 preceding section and $\tilde  \psi = (\psi_1, \psi_2)$
 and $e^{w/2} = \mathcal U = \mathcal V \neq 0$ 
 the spinors generating $f$ and the Dirac potential, respectively.
 Then we have the equations $\tilde \psi_z = \tilde \psi \tilde U$ and 
 $\tilde \psi_{\bar z} =  \tilde \psi \tilde V$ as before.
 Take a fundamental system $\tilde{F}$ of solutions to this system, 
 we obtain the matrix differential equations
 \begin{equation}\label{eq:Lax-Ftilde}
 \tilde F_z = \tilde F \tilde U , \;\;
 \tilde F_{\bar z} =  \tilde F \tilde V. \;\;
 \end{equation}
 It will be convenient for us to replace this system of equations by some
 gauged system.
 Consider the $\mathrm{GL}_{2}\mathbb{C}$ valued function 
 $G= \di (e^{-w/4}, e^{- w/4})$
 and put $F := \tilde F G$, where $\di (a, b)$ denotes 
 the diagonal $\GL$ matrix with entries $a, b$.
 Then the complex matrix $F$ satisfies the 
 equations
 \begin{equation}\label{eq:Lax-Nil}
 F_z = F U, \;\;F_{\bar z} = F V, 
 \end{equation}
 where $U = G^{-1} \tilde U G + G^{-1} G_z$ and 
 $V = G^{-1} \tilde V G + G^{-1} G_{\bar z}$. 
\subsection{}
 We define a family of Maurer-Cartan forms 
 $\alpha^{\lambda}$, parametrized by $U$ and $V$ 
 and the spectral parameter $\lambda \in \C^\times(:=\mathbb{C}\setminus\{0\})$
 as follows:
 \begin{equation}\label{eq:alpha}
 \alpha^{\lambda} := U^{\lambda} dz + V^{\lambda}d\bar z,
 \end{equation}
 where
 \begin{equation}\label{eq:U-V}
 U^{\lambda} =
 \begin{pmatrix}
 \frac{1}{4} w_z +\frac{1}{2} H_z e^{-w/2+u/2} 
 &-\lambda^{-1}e^{w/2} \\[0.1cm]
  \lambda^{-1} B e^{-w/2} & - \frac{1}{4} w_z
 \end{pmatrix}, \;\;
 V^{\lambda} =
 \begin{pmatrix}
 - \frac{1}{4} w_{\bar z} 
 &  -\lambda \bar B e^{-w/2}\\[0.1cm]
 \lambda e^{w/2} & 
 \frac{1}{4} w_{\bar z} +\frac{1}{2} H_{\bar z} e^{-w/2+u/2} 
 \end{pmatrix}.
 \end{equation}
 We note that $U^{\lambda}|_{\lambda =1} = U$ and $V^{\lambda}|_{\lambda =1} = V$.
 Similar to what happens in space forms, a surface $f$ in $\Nil$ of constant 
 mean curvature can be characterized as follows:
 \begin{Theorem}\label{thm:CMCcharact}
 Let $f : \D \to \Nil$ be a conformal immersion and $\alpha^{\lambda}$
 the $1$-form defined in \eqref{eq:alpha}.
 Then the following statements are mutually equivalent{\rm:}
 \begin{enumerate}
 \item $f$ has constant mean curvature.
 \item $d + \alpha^{\lambda}$ is a family of flat connections on $\D \times 
 \GL$.
 \item The map $\ad (F) \sigma_3$ from $\D$ to the semi-Riemannian symmetric 
 space $\GL /\di$ is harmonic.
 \end{enumerate}
 Here $\sigma_3$ denotes the diagonal matrix with entries $1, -1$ 
 and $\di$ denotes the diagonal subgroup $\mathrm{GL}_{1}\mathbb{C}
 \times \mathrm{GL}_{1}\mathbb{C}$ of $\mathrm{GL}_{2}\mathbb{C}$.
 \end{Theorem}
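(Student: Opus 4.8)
The plan is to handle $(1)\Leftrightarrow(2)$ by a direct curvature computation and $(2)\Leftrightarrow(3)$ by the standard zero-curvature characterization of harmonic maps. For the first equivalence, recall that $d+\alpha^{\lambda}$ is flat exactly when $U^{\lambda}_{\bar z}-V^{\lambda}_{z}+[V^{\lambda},U^{\lambda}]=0$. From \eqref{eq:U-V} I would split $U^{\lambda}=U_0+\lambda^{-1}U_{-1}$ and $V^{\lambda}=V_0+\lambda V_1$ with $U_0,V_0$ diagonal and $U_{-1},V_1$ off-diagonal, so that the curvature is a Laurent polynomial in $\lambda$ supported in degrees $-1,0,1$. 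Collecting coefficients, the $\lambda^{0}$ term is $\partial_{\bar z}U_0-\partial_z V_0+[V_1,U_{-1}]$ (the bracket $[V_0,U_0]$ vanishes, and $[V_1,U_{-1}]$ is diagonal), so it is diagonal; expanding $\partial_{\bar z}(e^{-w/2+u/2})$ I expect it to reproduce the Gauss equation \eqref{GaussEquation} verbatim, including the $p$-terms. The $\lambda^{\pm1}$ terms, $-\partial_z V_1+[V_1,U_0]$ and $\partial_{\bar z}U_{-1}+[V_0,U_{-1}]$, are off-diagonal.

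Next I would simplify the off-diagonal coefficients using the Codazzi equations \eqref{CodazziEquation}: in each $\lambda^{\pm1}$ block one entry is already a multiple of $H_z$ or $H_{\bar z}$, while the other carries a derivative of $B$ (or $\bar B$) which, upon substituting \eqref{CodazziEquation}, also reduces to a mean-curvature term, leaving the curvature equal to
$$
\lambda\begin{pmatrix} 0 & -\frac12 H_{\bar z}e^{u/2}\\ \frac12 H_z e^{u/2} & 0\end{pmatrix}
+\lambda^{-1}\begin{pmatrix} 0 & \frac12 H_{\bar z}e^{u/2}\\ -\frac12 H_z e^{u/2} & 0\end{pmatrix}.
$$
Since $f$ is a genuine immersion, the Gauss and Codazzi equations \eqref{GaussEquation}, \eqref{CodazziEquation} hold (they are the compatibility conditions of the Berdinsky system \eqref{eq:Lax-Niltilde}), so the $\lambda^{0}$ term already vanishes and the displayed remainder is all that obstructs flatness. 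This remainder is zero for every $\lambda\in\C^{\times}$ if and only if $H_z=H_{\bar z}=0$, i.e. $H$ is constant; this proves $(1)\Leftrightarrow(2)$. I expect the bookkeeping of the factor $e^{-w/2+u/2}$ in the diagonal of $U_0,V_0$ to be the main obstacle, since it is precisely these terms that must conspire to turn the $\lambda^{0}$ coefficient into the full Gauss equation and to cancel the $B$-contributions in the $\lambda^{\pm1}$ coefficients.

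For $(2)\Leftrightarrow(3)$ I would identify $\alpha^{\lambda}$ as the canonical loop of connections attached to the map $\ad(F)\sigma_3$. The involution $\ad(\sigma_3)$ of $\mathfrak{gl}_2\C$ fixes the diagonal subalgebra $\mathfrak{k}$ and negates the off-diagonal part $\mathfrak{p}$, which is the Cartan decomposition of $\GL/\di$. Writing $\alpha=F^{-1}dF=\alpha_{\mathfrak{k}}+\alpha_{\mathfrak{p}}'+\alpha_{\mathfrak{p}}''$ with $\alpha_{\mathfrak{p}}'$ and $\alpha_{\mathfrak{p}}''$ the $(1,0)$- and $(0,1)$-parts of the $\mathfrak{p}$-component, the shape of \eqref{eq:U-V} gives $\alpha_{\mathfrak{k}}=U_0\,dz+V_0\,d\bar z$, $\alpha_{\mathfrak{p}}'=U_{-1}\,dz$ and $\alpha_{\mathfrak{p}}''=V_1\,d\bar z$, whence $\alpha^{\lambda}=\alpha_{\mathfrak{k}}+\lambda^{-1}\alpha_{\mathfrak{p}}'+\lambda\,\alpha_{\mathfrak{p}}''$. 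This is exactly the loopified Maurer-Cartan form whose flatness for all $\lambda\in\C^{\times}$ is, by the standard zero-curvature characterization of harmonic maps into (semi-Riemannian) symmetric spaces \cite{DPW, Uhlenbeck}, equivalent to harmonicity of $\ad(F)\sigma_3$ into $\GL/\di$. Since that characterization is purely Lie-algebraic, the indefiniteness of the metric on $\GL/\di$ plays no role, and the equivalence $(2)\Leftrightarrow(3)$ follows, closing the cycle.
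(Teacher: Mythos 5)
Your proposal is correct and follows essentially the same route as the paper: the paper likewise expands the curvature of $d+\alpha^{\lambda}$ into its $\lambda^{-1},\lambda^{0},\lambda$ coefficients, identifies the $\lambda^{0}$ part with the Gauss equation \eqref{GaussEquation} and the $\lambda^{\pm1}$ parts with the Codazzi equations together with the terms $\tfrac{1}{2}H_z e^{u/2}$ and $\tfrac{1}{2}H_{\bar z}e^{u/2}$, and invokes \cite[Proposition 3.3]{DPW} for the equivalence with harmonicity of $\ad(F)\sigma_3$. The only cosmetic differences are that you substitute the Codazzi equations \eqref{CodazziEquation} to reduce the curvature to a clean $H$-derivative remainder, whereas the paper cites the holomorphicity of $\tilde A$ for constant mean curvature surfaces (which its own Codazzi equations already imply), and that you close the cycle via $(2)\Leftrightarrow(3)$ rather than the paper's $(1)\Leftrightarrow(3)$.
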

 \begin{proof} We start by writing out  the conditions describing that 
 $d + \alpha^{\lambda}$ is a family of flat connections on $\D \times 
 \GL$.
 It is straightforward to see that $d + \alpha^{\lambda}$ is flat 
 for all $\lambda\in \C^\times$ if and only if the equation
\begin{equation}\label{integrability}
 (U^{\lambda})_{\bar z}- (V^{\lambda})_z+[V^{\lambda}, U^{\lambda}]
 =0.
 \end{equation}
 is satisfied for all $\lambda\in \C^\times$.
 The coefficients of $\lambda^{-1}, \lambda^{0}$ and $\lambda$ 
 of \eqref{integrability} can be computed explicitly as follows:
 \begin{eqnarray}
 &\mbox{$\lambda^{-1}$-part:}\;\; \frac{1}{2} H_{\bar z} e^{u/2} =0, 
 \;\; B_{\bar z}+\frac{1}{2} B H_{\bar z}e^{-w/2+u/2}=0, \label{eq:codazzi1}\\
 & \mbox{$\lambda^{0}$-part:}\;\;
 \frac{1}{2} w_{z \bar z} + e^{w} -|B|^2 e^{-w}+\frac{1}{2}(H_{z \bar z}
 +p ) e^{-w/2 + u/2}  =0, \label{eq:structure}\\
 &\mbox{$\lambda$-part:} \;\;  
 \bar B_z+\frac{1}{2} \bar B H_z e^{-w/2+u/2}=0, 
  \;\;\frac{1}{2} H_z e^{u/2} =0,\label{eq:codazzi2}
 \end{eqnarray}
 where $p$ is $H_z (-w/2 +u/2)_{\bar z}$ for the $(1, 1)$-entry and
 $H_{\bar z} (-w/2 +u/2)_{z}$ for the $(2, 2)$-entry, respectively.
 Since the equations in \eqref{eq:structure} are structure equations for 
 the immersion $f$, these are always satisfied, which in fact is equivalent
 to \eqref{GaussEquation}. 

 $(1)\Rightarrow (2)$:
 Assume now that $f$ has constant mean curvature. 
 Then, as already mentioned earlier, 
 $\tilde A = (A + \phi_3^2/(2 H+i))$ is holomorphic, 
 \cite[Corollary2, Proposition3]{BT:Sur-Lie}. 
 Thus $B =(2 H +i) \tilde A/4$ is holomorphic as well.
 Clearly now, all equations characterizing the flatness of 
 $d + \alpha^{\lambda}$ are satisfied.

 $(2)\Rightarrow(1)$: 
 Assume now that $d + \alpha^{\lambda}$ is flat. Then it is easy to see that
 this implies $H$ is constant.
 
 $(1)\Leftrightarrow (3)$: Assume now that the first two statements of the theorem are satisfied.
 Then the coefficient matrices $U^{\lambda}$ and $V^{\lambda}$ actually 
 have trace $=0$ and $\alpha^{\lambda}$ describes the Maurer-Cartan form 
 of a harmonic map of the symmetric space $\SL / \di$ as in 
 \cite[Proposition3.3]{DPW}.
 Conversely, if $\ad (F) \sigma_3 $ is a harmonic map into
 $\GL / \di = \SL / \di$, then  \cite{DPW} shows that  $d + \alpha^{\lambda}$ is flat. 
 Note that the diagonal subgroup $\di$ of $\SL$ is 
 $\{\di(\gamma,\gamma^{-1})\ \vert \ \gamma\in \mathbb{C}^\times\}$ 
 that is isomorphic to $\mathrm{GL}_{1}\mathbb{C}$.
\end{proof}
\begin{Remark}\label{SpaceGeo}
 The semi-Riemannian symmetric space $\mathrm{SL}_{2}\mathbb{C}/
 \mathrm{GL}_{1}\mathbb{C}$ is identified with 
 the space of all oriented geodesics in the 
 hyperbolic $3$-space $\mathbb{H}^3$. The pairwise hyperbolic Gauss 
 maps of constant mean curvature surfaces in $\mathbb{H}^3$ are 
 Lagrangian harmonic maps into the indefinite K{\"a}hler symmetric space 
 $\mathrm{SL}_{2}\mathbb{C}/\mathrm{GL}_{1}\mathbb{C}$, \cite{DIK}.
\end{Remark}
 From the list of equations characterizing 
 the flatness of  $d + \alpha^{\lambda}$, we obtain the following.
\begin{Corollary} 
 Let $f : \D \to \Nil$ be a conformal immersion.
 If $f$ has constant mean curvature, then $B$ is holomorphic and
\begin{equation}\label{integrCMC}
 w_{z \bar z} + 2e^{w} -2|B|^2 e^{-w}=0
\end{equation}
 holds. The equation {\rm(\ref{integrCMC})} is the Gauss equation of the 
 constant mean curvature surface.
\end{Corollary}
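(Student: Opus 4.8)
The plan is to read off both assertions directly from the flatness equations already derived in the proof of Theorem~\ref{thm:CMCcharact}. By that theorem the hypothesis that $f$ has constant mean curvature is equivalent to $d+\alpha^{\lambda}$ being a family of flat connections, so the $\lambda^{-1}$-, $\lambda^{0}$- and $\lambda$-part equations \eqref{eq:codazzi1}, \eqref{eq:structure} and \eqref{eq:codazzi2} all hold. The one observation that drives everything is that constant mean curvature means precisely $H_{z}=H_{\bar z}=0$, whence also $H_{z\bar z}=0$.

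First, to get holomorphicity of $B$, I would substitute $H_{\bar z}=0$ into the second equation of the $\lambda^{-1}$-part \eqref{eq:codazzi1}, namely $B_{\bar z}+\tfrac12 B H_{\bar z}e^{-w/2+u/2}=0$. The second summand vanishes, leaving $B_{\bar z}=0$, which says exactly that $B$ is holomorphic. One could equally use the conjugate equation in the $\lambda$-part \eqref{eq:codazzi2} to conclude $\bar B_{z}=0$.

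Next, for the Gauss equation, I would take the $\lambda^{0}$-part \eqref{eq:structure}, which reads $\tfrac12 w_{z\bar z}+e^{w}-|B|^{2}e^{-w}+\tfrac12(H_{z\bar z}+p)e^{-w/2+u/2}=0$. Here $p$ is a scalar multiple of $H_{z}$ or $H_{\bar z}$ (by definition it equals $H_{z}(-w/2+u/2)_{\bar z}$ for the $(1,1)$-entry and $H_{\bar z}(-w/2+u/2)_{z}$ for the $(2,2)$-entry), so $p=0$ when $H$ is constant, and $H_{z\bar z}=0$ as well. The final bracketed term therefore drops out, and multiplying the surviving identity by $2$ yields $w_{z\bar z}+2e^{w}-2|B|^{2}e^{-w}=0$, which is exactly \eqref{integrCMC}.

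I do not anticipate any genuine obstacle: the corollary is essentially a specialization of the already-established flatness equations to the constant-$H$ case. The only point deserving an explicit line is that the auxiliary term $p$ vanishes identically under the constant mean curvature hypothesis, which is immediate from its definition as a multiple of a first derivative of $H$. The identification of \eqref{integrCMC} with the Gauss equation is then nothing more than the fact, already recorded in the proof of Theorem~\ref{thm:CMCcharact}, that the $\lambda^{0}$-coefficient \eqref{eq:structure} of the integrability condition is equivalent to \eqref{GaussEquation}.
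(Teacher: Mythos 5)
Your proposal is correct and follows essentially the same route as the paper: the paper introduces this corollary with the phrase ``From the list of equations characterizing the flatness of $d+\alpha^{\lambda}$, we obtain the following,'' i.e.\ exactly your specialization of \eqref{eq:codazzi1}, \eqref{eq:structure}, \eqref{eq:codazzi2} (equivalently the Gauss--Codazzi equations \eqref{GaussEquation}--\eqref{CodazziEquation}) to $H_z=H_{\bar z}=H_{z\bar z}=p=0$. Nothing further is needed.
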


\begin{Remark}
\mbox{}
\begin{enumerate}
\item By what was said earlier, the converse is almost true. 
Actually there is only one counter example.
In particular, the converse to the statement in the corollary is true, if 
the spinors $\psi_1 $ and $\psi_2$ satisfy $|\psi_1| \neq |\psi_2|$.

\item 
 If, in the setting of the corollary above, we assume $B$ to be holomorphic,
 then the solution to the elliptic equation \eqref{integrCMC} 
 produces an analytic solution $w$.
 Inserting this into \eqref{eq:Lax-Ftilde} we see that 
 the spinors $\psi_j$, $ j = 1,2$
 are analytic. Therefore, the condition in (1) above follows, 
 if $\psi_1$ and $\psi_2$ have a different absolute value at 
 least at one point of the domain $\D$.
\end{enumerate}
\end{Remark}

\subsection{}\label{subsc:associated}
 If $f$ is a constant mean curvature immersion into $\Nil$, 
 then the corresponding maps $\ad (F) \sigma_3$, where $F$ satisfies 
 $F^{-1} d F = \alpha^{\lambda}, \lambda \in \mathbb S^1$,
 into $\SL / \di$ all are harmonic. 
 They form the \textit{associated family} of harmonic maps.
 Tracing back all steps carried out so far, starting from $f$ and ending up
 at $F$, one can define maps $f^{\lambda} : \D \rightarrow \Nil^{\C}$ 
 into the complexification $\Nil^\mathbb{C}$ of $\Nil$. 
 We call the $1$-parameter family of maps $\{f^\lambda\}_{\lambda 
 \in \mathbb S^1}$ the 
 \textit{associated family} of $f$. 
 From the definition it is clear that the associated family $f^{\lambda}$
 has the invariant support function $e^{w/2}$ 
 and the varying Abresch-Rosenberg differential $4 B^{\lambda} dz^2$, 
 that is, $B^{\lambda} = 
 \lambda^{-2} B$.
 One could hope that all these maps in the associated family 
 actually have values in $\Nil$ 
 and are of constant mean curvature. This turns out not to be the case.

 The reason is that for constant mean curvature surfaces into $\Nil$ 
 there exists a special formula, due to Berdinsky 
 \cite{Ber:Note, Ber:Heisenberg}.
\begin{Proposition}[\cite{Ber:Note, Ber:Heisenberg}]\label{prop:Ber-over}
 Let $f:\D \rightarrow \Nil$ be a conformal immersion of constant mean curvature.
 Then the imaginary part of $w$ is constant or one of the following 
 three formulas holds:
\begin{equation*}
e^{( \bar{w} - w)/2} = \frac{2H+i}{2H-i}, \;\;\;\;e^{( \bar{w} - w)/2}  = \frac{2H-i}{2H+i}
\end{equation*}
or 
\begin{equation}\label{eq:Berd3}
\left|r + \bar{r} \frac{B}{|e^{w}|}\right|^2 
 = -st \left( 1 - \frac{|B|^2}{|e^{2w}|}\right)^2,
\end{equation}
 where $B$ and $w$ are as above and 
 $r = -\frac{1}{2}(2H + i){(\bar{w} - w)}_z, 
 s = (2H+i)e^{{\bar{w}/2}} -  (2H-i)e^{w/2}$
 and $t = (2H+i) e^{w/2} -(2H-i)e^{{\bar{w}/2}}$.
\end{Proposition}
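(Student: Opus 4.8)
The plan is to use the one algebraic constraint that a genuine immersion imposes on top of the Berdinsky system \eqref{eq:Lax-Niltilde}, namely the definition \eqref{def-exp(w/2)} of $w$ rewritten through $e^{u/2}=2(|\psi_1|^2+|\psi_2|^2)$ and $h=2(|\psi_1|^2-|\psi_2|^2)$ as
\[
e^{w/2}=-\tfrac12(2H-i)|\psi_1|^2-\tfrac12(2H+i)|\psi_2|^2 .
\]
Since $f$ has constant mean curvature, $H_z=H_{\bar z}=0$, so the matrices $\tilde U,\tilde V$ of \eqref{eq:U-V1} lose their $H_z,H_{\bar z}$ terms. First I would differentiate the displayed identity with respect to $z$, and separately differentiate its complex conjugate with respect to $z$, each time eliminating $\partial_z\psi_j$ and $\partial_z\overline{\psi_j}=\overline{\partial_{\bar z}\psi_j}$ by means of \eqref{eq:Lax-Niltilde}. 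Because $\partial_z e^{w/2}=\tfrac12 w_z e^{w/2}$, on each differentiation a diagonal term proportional to $w_z$ (resp.\ $\bar w_z$) cancels against the left-hand side, leaving two purely algebraic relations among the real quantities $a=|\psi_1|^2$, $b=|\psi_2|^2$ and the complex quantity $c=\psi_1\overline{\psi_2}$ (which satisfy $ab=|c|^2$).

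To package the result cleanly I would introduce $X=c+\tfrac{B}{|e^{w}|}\,\bar c$ and record the elementary identities $\bar s=-s$ and $\bar t=-t$, i.e.\ that $s$ and $t$ are purely imaginary; these follow at once from $\overline{2H\pm i}=2H\mp i$ and $\overline{e^{w/2}}=e^{\bar w/2}$, and they are exactly what lets the $\bar c$-coefficients collapse to $\pm\,t/|e^{w}|$ and $\mp\,s/|e^{w}|$. With this notation the two differentiated identities become
\[
ra+sX=0,\qquad rb+tX=0 .
\]
Note that $r=-\tfrac12(2H+i)(\bar w-w)_z=i(2H+i)(\Im w)_z$, so $r=0$ is equivalent to $(\Im w)_z=0$; hence if $r\equiv0$ then $\Im w$ is constant, the first alternative.

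It then remains to treat $r\neq0$. Conjugating the two relations and using $\bar s=-s,\ \bar t=-t$ yields $s(\bar rX+r\bar X)=0$ and $t(\bar rX+r\bar X)=0$. If $s=0$ or $t=0$ one reads off, from the definitions of $s,t$, exactly $e^{(\bar w-w)/2}=(2H-i)/(2H+i)$ or $(2H+i)/(2H-i)$, the two middle alternatives; otherwise $\bar rX+r\bar X=0$, i.e.\ $\bar X=-(\bar r/r)X$. On the dense open set $M_f$ of Remark \ref{rm:psizero} we have $a,b>0$, so $X\neq0$, and I would finish by eliminating $a,b,c$: the two relations give $ab=stX^2/r^2$, while inverting $X=c+\tfrac{B}{|e^{w}|}\bar c$ gives $\Delta\,c=X-\tfrac{B}{|e^{w}|}\bar X$ with $\Delta=1-|B|^2/|e^{2w}|$, whence $ab=|c|^2=\Delta^{-2}\,|X-\tfrac{B}{|e^{w}|}\bar X|^2$. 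Equating the two expressions for $ab$ and substituting $\bar X=-(\bar r/r)X$ makes the common factor $X^2$ cancel and produces $|r+\bar r\,\tfrac{B}{|e^{w}|}|^2=-st\,\Delta^2$, which is \eqref{eq:Berd3}. I expect the only real difficulty to be bookkeeping: carrying the four combinations of $e^{\pm w/2},e^{\pm\bar w/2}$ accurately through the two differentiations so that they reassemble into $r,s,t$, and arranging the case split so that the three degenerate alternatives are cleanly peeled off from the generic identity \eqref{eq:Berd3}; no individual step is analytically deep.
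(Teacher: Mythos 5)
Your proposal is correct and follows essentially the same route as the paper: you differentiate the reality constraint $e^{w/2}=-\tfrac12(2H-i)|\psi_1|^2-\tfrac12(2H+i)|\psi_2|^2$ via the Berdinsky system, and your two relations $ra+sX=0$, $rb+tX=0$ are precisely the paper's equations \eqref{eq:Br1}--\eqref{eq:Br2} with the traces expanded (the paper merely divides through by $|\psi_1|^2$ and works with $\xi=\psi_2/\psi_1$ instead of with $a,b,X$). The subsequent case split on $r$, $s$, $t$ and the final elimination leading to \eqref{eq:Berd3} likewise mirror the paper's argument, the only cosmetic differences being your conjugation trick $\bar rX+r\bar X=0$ in place of the paper's substitution $|\xi|^2=t/s$, and a division by $\Delta=1-|B|^2/|e^{2w}|$ that the paper's absolute-value step avoids (the case $\Delta=0$ makes both sides of \eqref{eq:Berd3} vanish, so nothing is lost).
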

\begin{proof}[Sketch of proof]
 Let us consider the equation 
 $$
 e^{w/2} = -H(|\psi_1|^2 + |\psi_2|^2) + \frac{i}{2}(|\psi_1|^2- |\psi_2|^2).
 $$ 
 This equation is rewritten equivalently in the form 
 \begin{equation}\label{eq:hath}
 I = \bar{\psi}^t \hat{h} \psi \;\;\mbox{with}\;\; 
 \hat h =
 \begin{pmatrix}
 \frac{1}{2}(-2H +i) e^{-w/2} & 0 \\ 0 & \frac{1}{2}(-2 H - i)e^{-w/2} 
 \end{pmatrix}.
 \end{equation}
 This equation is differentiated for $z$ and for $ \bar{z}$.
 The resulting equations are
 \begin{equation}\label{eq:Br1}
 \mathrm{tr}\>
 \left(
 \frac{1}{2}e^{-w/2}
 \begin{pmatrix}0 & \frac{t B}{|e^w|} \\ t & r
 \end{pmatrix} 
 \begin{pmatrix}
 |\psi_1|^2 & \psi_1 \overline{\psi_2} \\
 \overline{\psi_1} \psi_2 & |\psi_2|^2 
 \end{pmatrix}
 \right) =0, 
 \end{equation}
 \begin{equation}\label{eq:Br2}
 \mathrm{tr}\>
 \left(
 \frac{1}{2}e^{-w/2}
 \begin{pmatrix}- \bar r & s \\
 \frac{s \bar B}{|e^w|} & 0
 \end{pmatrix} 
 \begin{pmatrix}
 |\psi_1|^2 & \psi_1 \overline{\psi_2} \\
 \overline{\psi_1} \psi_2 & |\psi_2|^2 
 \end{pmatrix}
 \right) =0,
 \end{equation}
 where $r =-\tfrac{1}{2}(2H +i)(\bar w- w)_z$, 
 $t =(2H+i)e^{w/2} -(2H-i)e^{\bar w/2}$
 and $s =(2H+i)e^{\bar w/2} -(2H-i)e^{w/2}$.
 Conversely, from these latter two equations one obtains
  $ce^{w/2} = -H(|\psi_1|^2 + |\psi_2|^2) + i(|\psi_1|^2- |\psi_2|^2)/2$, 
 where $c$ is a complex constant. 
 One can normalize things or manipulate things so 
 that this constant can be removed. 
 Thus \eqref{eq:hath} is equivalent with 
 \eqref{eq:Br1} and \eqref{eq:Br2}.
 The equations  \eqref{eq:Br1} and \eqref{eq:Br2}
 are then reformulated equivalently in the form:
$$
\frac{t B}{|e ^w|} \xi + t \bar{\xi} + r |\xi|^2 = 0
\;\;\mbox{and}\;\;
\frac{s B}{|e ^w|} \xi + s \bar{\xi}+r =0,
$$
 where $\xi = \psi_2/\psi_1$, 
 which can be done without loss of generality since otherwise $\psi_1 = 0$ 
 identically and the Berdinsky system would also yield $\psi_2 =0$.
 The next conclusion in \cite{Ber:Heisenberg} requires to assume that 
 $s$  does not vanish identically.
 Therefore we need to admit the case $s = 0$ and 
 continue with the assumption $s \neq 0$. 
 Inserting the second equation into the first yields
 $r (|\xi|^2 - t/s) = 0$. We thus need to admit the case $r=0$.
 It is easy to see that this latter condition implies that the imaginary 
 part of $w$ is constant.
 This same statement follows from  $s = 0$.
 Now we assume $s \neq 0$ and $r \neq 0$ 
 and obtain (as claimed in \cite{Ber:Heisenberg})
 $|\xi|^2 = t/s$. 
 Inserting this into the first of the last two equations above we see that 
 these two equations are complex conjugates of each other now if $t \neq 0$. 
 But $t = 0$ implies again that the imaginary part of $w$ is constant.
 Finally, solving for $\xi$ in the second equation above 
 and inserting into the first one now obtains
 the equation 
 $$ 
 \left(1 - \frac{|B|^2}{|e^{w}|^2}\right) \bar{\xi} = 
 -\frac{1}{s}\left(r + \bar{r} \frac{B}{|e^{w}|}\right).
 $$
 Taking absolute values here yields the last of our equations. 
\end{proof}
 As a corollary to this result we obtain the following.
\begin{Corollary}\label{coro:CMCnotinNil}
 Let $f:\D \rightarrow \Nil$ be a conformal 
 immersion of constant mean curvature.
 If the associated family 
 $f^{\lambda}$ of immersions into $\Nil^{\C}$ 
 as defined above actually has values in $\Nil$ 
 for all $\lambda \in \mathbb{S}^1$, 
 then $\bar{w} - w$ is constant and the surfaces are minimal.
\end{Corollary}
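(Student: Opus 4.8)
The plan is to derive the conclusion directly from Proposition~\ref{prop:Ber-over}, applied not to $f$ itself but to the whole associated family $f^\lambda$. The key observation is that the associated family preserves the support function $e^{w/2}$ while replacing the Abresch-Rosenberg differential $B$ by $B^\lambda = \lambda^{-2} B$. Since by hypothesis every $f^\lambda$ (for $\lambda \in \mathbb{S}^1$) is a genuine conformal constant mean curvature immersion into $\Nil$, each $f^\lambda$ must satisfy the conclusion of Proposition~\ref{prop:Ber-over}: namely, for each $\lambda$, either $\Im w$ is constant, or one of the two algebraic relations $e^{(\bar w - w)/2} = (2H \pm i)/(2H \mp i)$ holds, or the equation \eqref{eq:Berd3} holds with $B$ replaced by $B^\lambda = \lambda^{-2} B$.

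First I would write down \eqref{eq:Berd3} for the family, substituting $B \mapsto \lambda^{-2} B$. The crucial point is that $w$, $H$, $r$, $s$, $t$ are \emph{independent of $\lambda$} (they depend only on the invariant data of the surface: $w$ is built from the invariant support, $H$ is the constant mean curvature, and $r,s,t$ are functions of $w$ and $H$ only). Only the terms containing $B$ carry a $\lambda$-dependence, and in \eqref{eq:Berd3} these come with factors $\lambda^{-2} B / |e^w|$ on the left and $|\lambda^{-2} B|^2 / |e^{2w}| = |B|^2/|e^{2w}|$ on the right. Since $|\lambda^{-2}| = 1$ on $\mathbb{S}^1$, the right-hand side of \eqref{eq:Berd3} is actually independent of $\lambda$, while the left-hand side $|r + \bar r \lambda^{-2} B/|e^w||^2$ genuinely varies with $\lambda$ unless one of the two summands vanishes. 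So the strategy is: expand the left-hand side as $|r|^2 + |B|^2|r|^2/|e^{2w}| + 2\Re(\bar r \cdot \bar r \lambda^{-2} B /|e^w|)$, and observe that the cross term $2\Re(\lambda^{-2} \bar r^2 B / |e^w|)$ oscillates as $\lambda$ runs over $\mathbb{S}^1$ while everything else is constant. For the equation to hold for \emph{all} $\lambda \in \mathbb{S}^1$, this oscillating term must vanish identically, forcing $\bar r^2 B = 0$, hence $r = 0$ (since $B$ is generically nonzero on the dense set $M_f$ from Remark~\ref{rm:psizero}).

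Next I would handle the remaining alternatives in Proposition~\ref{prop:Ber-over}. The two algebraic relations $e^{(\bar w - w)/2} = (2H\pm i)/(2H\mp i)$ involve no $B$ at all and immediately give that $\bar w - w$ is constant, which is exactly the desired conclusion. Likewise, as noted in the proof of Proposition~\ref{prop:Ber-over}, the condition $r = 0$ — where $r = -\tfrac{1}{2}(2H+i)(\bar w - w)_z$ and $H$ is a nonzero constant, so $(2H+i) \neq 0$ — forces $(\bar w - w)_z = 0$, and together with its conjugate $(\bar w - w)_{\bar z} = 0$ this yields that $\bar w - w$ is constant. Thus in every case we conclude $\bar w - w$ is constant. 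Finally, to see that the surface must be minimal, I would recall that $\bar w - w = 2i\,\Im w$ is purely imaginary and relate it back to the definition $e^{w/2} = -\tfrac{H}{2}e^{u/2} + \tfrac{i}{4}h$: when $\Im w$ is constant, comparing this constraint with the explicit form $e^{w/2} = -(2H+i)|\psi_2|^2/2 - (2H-i)|\psi_1|^2/2$ and using that $|\psi_1|, |\psi_2|$ vary forces $H = 0$.

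The main obstacle will be the last step, establishing minimality from the constancy of $\bar w - w$. The vanishing of the oscillating term is a clean Fourier-type argument on $\mathbb{S}^1$ and the case analysis is routine, but extracting $H = 0$ requires care: one must argue that if $H \neq 0$ were a nonzero constant, the invariant constraint relating $e^{w/2}$ to the spinors (which must hold for the family to stay in $\Nil$) together with $\Im w = \mathrm{const}$ is incompatible with $f$ being an immersion with nonconstant $(|\psi_1|^2, |\psi_2|^2)$. Here I expect to invoke the special Berdinsky formula \eqref{eq:hath} for each $f^\lambda$ and track how the phase of $e^{w/2}$ is pinned by $\Im w$ being constant; the interplay between the two algebraic alternatives $e^{(\bar w - w)/2} = (2H\pm i)/(2H\mp i)$ and the differential conclusion $r=0$ is exactly where the nonzero-$H$ case gets excluded, leaving minimality as the only possibility.
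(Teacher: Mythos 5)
Your derivation of the first conclusion (that $\bar w - w$ is constant) follows the paper's own proof essentially verbatim: apply Proposition \ref{prop:Ber-over} to every member $f^{\lambda}$ of the family, observe that $w$, $H$, $r$, $s$, $t$ are $\lambda$-independent while $B$ is replaced by $\lambda^{-2}B$, and force the oscillating cross term $2\Re\bigl(\lambda^{-2}\bar r^{2}B\bigr)/|e^{w}|$ in \eqref{eq:Berd3} to vanish for all $\lambda\in\mathbb{S}^1$, giving $r\equiv 0$; the remaining alternatives of Proposition \ref{prop:Ber-over} yield the constancy of $\bar w - w$ directly. You are in fact more explicit than the paper here (the paper only treats the case where \eqref{eq:Berd3} holds). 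One caveat: your justification that $B$ is ``generically nonzero'' by Remark \ref{rm:psizero} is a misattribution --- that remark concerns $\phi_3 = 2\psi_1\overline{\psi_2}$, not $B$, and $B$ may vanish identically (horizontal umbrellas, spheres of revolution); the paper glosses over this same point, so I will not press it.

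The genuine gap is the final step, and you half-concede it yourself. From $\Im w = \theta_0$ constant, minimality does \emph{not} follow by ``comparing with the spinor expression and using that $|\psi_1|, |\psi_2|$ vary'': constancy of $\Im w$ is perfectly compatible with $H \neq 0$ --- for instance, on any surface with $h \equiv 0$ one has $e^{w/2} = -\tfrac{H}{2}e^{u/2}$ real, so $\Im w$ is constant regardless of the value of the constant $H$. The missing ingredient, which the paper supplies, is the Gauss equation \eqref{integrCMC}: since $\theta_0$ is constant, $w_{z\bar z}$ is real, so taking the imaginary part of \eqref{integrCMC} gives $\bigl(e^{2\Re w} + |B|^{2}\bigr)\sin\theta_0 = 0$, hence $\sin\theta_0 = 0$ and $e^{w/2} = e^{\Re w/2}e^{ik\pi/2}$ for an integer $k$. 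The conclusion then comes from a parity case analysis: if $k$ is odd, $e^{w/2}$ is purely imaginary, and since $\Re\bigl(e^{w/2}\bigr) = -\tfrac{H}{2}e^{u/2}$ this forces $H = 0$; if $k$ is even, $e^{w/2}$ is real, so $h \equiv 0$, i.e.\ $|\psi_1| = |\psi_2|$, which by the analysis of Appendix \ref{sc:Appendix} (the Hopf cylinder case) is incompatible with the constant mean curvature hypothesis and is discarded. Without invoking the Gauss equation to pin down the phase $\theta_0$ and without this case analysis, your argument cannot exclude the alternative $H \neq 0$, so as written it does not establish minimality.
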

\begin{proof}
 Assume the last of the equations is satisfied.
 Introducing $\lambda$ as in this paper can also be interpreted, 
 like for constant mean curvature surfaces in $\R^3$ by Bonnet, 
 as replacing $B$ by $ \lambda^{-2}B$, $\lambda \in \mathbb{S}^1$.
 This is an immediate consequence of \eqref{integrCMC}.
 Let $\psi_j$ be the $\lambda$-dependent solutions 
 to the equations \eqref{eq:Lax-Nil}.
 Then $f^{\lambda}$ is defined from these $\psi_j$ as $f$ was defined in the case 
 $\lambda = 1$. Thus the Berdinsky system associated with $f^{\lambda}$ is 
 a constant mean curvature system and the immersions $f^{\lambda}$ 
 all are constant mean curvature immersions into $\Nil$.
 Therefore, all the quantities associated with these immersions  satisfy 
 the Berdinsky equation \eqref{eq:Berd3}. As a consequence of our assumptions,  
 this equation needs to be satisfied for all $B^{\lambda}$.
 But replacing $B$ by $ \lambda^{-2}B = B^{\lambda}$ in \eqref{eq:Berd3} 
 we see that the required equality 
 only holds for all $\lambda \in \mathbb{S}^1$ if and only if the 
 function $r$ occurring in \eqref{eq:Berd3}
 vanishes identically.  Moreover, the vanishing of $r$ implies
 that $ \bar{w} -w$ is antiholomorphic, 
 and since this function only attains values in $i\R$, it is constant.
 Let us consider next the Gauss equation \eqref{integrCMC}.
 Since the imaginary part 
 $\Im{w} = \theta_0$ of $w$ 
 is constant, the term $w_{z \bar{z}}$ is real. 
 Therefore the imaginary part of 
 $e^w - B \bar{B} e^{-w}$ vanishes. A simple computation shows that this implies 
 $( e^{2 \Re{w}} + B \bar{B}) \sin(\theta_0) = 0 $, whence $\sin(\theta_0) = 0$ and 
 $\theta_0$ is an integral multiple of $\pi$. 
 As a consequence, $e^{w/2} = e^{\Re{w}/2} e^{ik\pi/2}$.
 If $k$ is odd, then $e^{w/2}$ is purely imaginary and $H=0$ follows. 
 If $k$ is even, then $e^{w/2}$ is real. 
 This implies $|\psi_1|^2 = |\psi_2|^2$, which shows that it is a surface of 
 non constant mean curvature, see Appendix \ref{sc:Appendix}.
\end{proof}
\begin{Remark}
 We have just shown that associated families 
 of ``real'' constant mean curvature surfaces in $\Nil$
 can only be minimal. We will show in the next section that actually every
 minimal surface is a member of an associated family of minimal surfaces 
 in $\Nil$.
\end{Remark}
%%%%%%%%%%%%%%%%%%%%%%%%%%%%%%%%%%%%%%%%%%%%%%%%%%%%%%%%%%%%%%%%%%%%%%%%%%%%%%%%%%%
\section{Characterizations of minimal surfaces in $\Nil$}\label{sc:minimal}
\subsection{}
 We recall the beginning of section \ref{sc:CMCinNil}.
 In particular, we consider the family of Maurer-Cartan forms $\alpha^{\lambda}$
 \begin{equation}\label{eq:alpha2}
 \alpha^{\lambda} := U^{\lambda} dz + V^{\lambda}d\bar z, \ \ 
 \lambda \in \mathbb{S}^1,
 \end{equation}
 where $U^{\lambda}$ and $V^{\lambda}$ are defined in \eqref{eq:U-V}.
 For surfaces of constant mean curvature these expressions have a
 particularly simple form:
 \begin{equation}\label{eq:U-VCMC}
 U(\lambda) (:=U^{\lambda}) =
 \begin{pmatrix}
 \frac{1}{4} w_z  
 &-\lambda^{-1}e^{w/2} \\[0.1cm]
  \lambda^{-1} B e^{-w/2} & - \frac{1}{4} w_z
 \end{pmatrix}, \;\;
 V(\lambda) (:= V^{\lambda}) =
 \begin{pmatrix}
 - \frac{1}{4} w_{\bar z} 
 &  -\lambda \bar B e^{-w/2}\\[0.1cm]
 \lambda e^{w/2} & 
 \frac{1}{4} w_{\bar z} 
 \end{pmatrix}.
 \end{equation}
 Minimal surfaces can be easily characterized among all 
 constant mean curvature surfaces in the following manner.
 \begin{Lemma}\label{thm:min-sym}
 Let $f$ be a surface of constant mean curvature in $\Nil$. 
 Then the following statements are mutually equivalent{\rm:}
 \begin{enumerate}
 \item $f$ is a minimal surface.
 \item  $e^{w/2} = - \frac{H}{2} e^{u/2} + \frac{i}{4}h$ is purely imaginary.
 \item The matrices $U(\lambda)$ and $V(\lambda)$ satisfy
  \begin{equation}\label{eq:U-Vsymm}
 V(\lambda) = - \sigma_3 \overline{ U(1/\bar \lambda)}^t \sigma_3,
 \;\;\mbox{where}\;\;
  \sigma_3 = \di (1, -1).
 \end{equation}
 \end{enumerate}
 In particular, for a constant mean curvature surface $f$, 
 the Maurer-Cartan form $ \alpha^{\lambda}$
 takes values in the real Lie subalgebra $\isu$
 of $\mathfrak{sl}_{2}\mathbb{C}$ if and only if $f$ is minimal{\rm;}
$$
\isu =
\left\{
\left.
\left(
\begin{array}{cc}
ai & b\\
\bar{b} & -ai
\end{array}
\right)
\
\right|
\ a \in \mathbb{R},\> b \in \mathbb{C}
\>
\right\}.
$$ 
\end{Lemma}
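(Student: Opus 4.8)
The plan is to establish $(1)\Leftrightarrow(2)$ directly from the definition of $w$, then obtain $(2)\Leftrightarrow(3)$ by comparing the matrix $-\sigma_3\overline{U(1/\bar\lambda)}^t\sigma_3$ with $V(\lambda)$ entry by entry, and finally to reinterpret the symmetry in $(3)$ as the reality condition that cuts out $\isu$.

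First I would treat $(1)\Leftrightarrow(2)$. By \eqref{def-exp(w/2)} we have $e^{w/2}=-\tfrac{H}{2}e^{u/2}+\tfrac{i}{4}h$, and since the conformal factor satisfies $e^{u/2}=2(|\psi_1|^2+|\psi_2|^2)>0$ while the support function $h=2(|\psi_1|^2-|\psi_2|^2)$ is real-valued, this displays $-\tfrac{H}{2}e^{u/2}$ and $\tfrac14 h$ as the real and imaginary parts of $e^{w/2}$. Hence $e^{w/2}$ is purely imaginary exactly when $He^{u/2}\equiv0$; as $e^{u/2}$ never vanishes away from branch points, this holds if and only if $H=0$, i.e. if and only if $f$ is minimal.

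Next, for $(2)\Leftrightarrow(3)$ I would start from the constant-mean-curvature form \eqref{eq:U-VCMC}. Replacing $\lambda^{-1}$ by $\bar\lambda$ in $U$, conjugating, transposing, and conjugating by $\sigma_3$ produces a matrix whose $(2,1)$-entry is $-\lambda\,\overline{e^{w/2}}$ and whose $(1,1)$-entry is $-\tfrac14\overline{w_z}$. Setting these equal to the corresponding entries $\lambda e^{w/2}$ and $-\tfrac14 w_{\bar z}$ of $V(\lambda)$ gives the two scalar identities $\overline{e^{w/2}}=-e^{w/2}$ and $\overline{w_z}=w_{\bar z}$, while the remaining two entries produce nothing new (the $(1,2)$-entry reduces to $\bar B\,\Re(e^{-w/2})=0$, which the first identity already forces). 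The first identity is precisely statement $(2)$, so $(3)\Rightarrow(2)$ is immediate. Since neither identity involves $\lambda$, for the converse it suffices to deduce the diagonal identity from $(2)$: if $e^{w/2}$ is purely imaginary on the connected domain $\D$, then $\Im(w/2)$ is locally and hence globally constant, so $w=\rho_0+ic$ with $\rho_0$ real-valued and $c$ constant, whence $\overline{w_z}=\overline{(\rho_0)_z}=(\rho_0)_{\bar z}=w_{\bar z}$. Thus $(2)$ supplies both identities and $(3)$ follows; in particular $(3)$ holds for one $\lambda\in\C^\times$ if and only if it holds for all of them.

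Finally, for the $\isu$-assertion I would describe $\isu$ as the fixed-point set of the antilinear involution $\rho(X)=-\sigma_3\overline{X}^t\sigma_3$ on $\sl$: the condition $\rho(X)=X$ forces the diagonal entries of $X$ to be purely imaginary and the off-diagonal entries to be complex conjugates, which is exactly the displayed description of $\isu$. A $\sl$-valued one-form $\xi\,dz+\eta\,d\bar z$ takes values in $\isu$ on real tangent vectors precisely when $\eta=\rho(\xi)$, since for a real vector $X$ one has $d\bar z(X)=\overline{dz(X)}$ and $\rho$ is antilinear. Specializing to $\lambda\in\mathbb S^1$, where $1/\bar\lambda=\lambda$, this reality condition reads $V(\lambda)=-\sigma_3\overline{U(\lambda)}^t\sigma_3$, which is exactly $(3)$. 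Hence $\alpha^\lambda$ is $\isu$-valued if and only if $(3)$ holds, if and only if $f$ is minimal. I expect the only non-formal step to be the implication from $(2)$ to the diagonal reality $\overline{w_z}=w_{\bar z}$—one must notice that a purely imaginary $e^{w/2}$ pins $\Im w$ to a constant—together with keeping the degenerate loci (branch points, zeros of $B$) under control so that the comparison is read off from the robust $(2,1)$-entry.
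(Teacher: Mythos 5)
Your proof is correct. The paper in fact states this lemma without any proof, treating it as a direct verification from \eqref{def-exp(w/2)} and \eqref{eq:U-VCMC}, and your argument supplies exactly that verification: since $H$ is constant and $e^{u/2}>0$ wherever $w$ is defined, the real part of $e^{w/2}$ is $-\tfrac{H}{2}e^{u/2}$, giving $(1)\Leftrightarrow(2)$; the entrywise comparison of $V(\lambda)$ with $-\sigma_3\overline{U(1/\bar\lambda)}^t\sigma_3$ correctly reduces to the two $\lambda$-independent identities $\overline{e^{w/2}}=-e^{w/2}$ and $\overline{w_z}=w_{\bar z}$, with the $(1,2)$-entry redundant because the reciprocal of a purely imaginary number is purely imaginary, and the diagonal identity does follow from $(2)$ since $\Im w$ is then locally constant (equivalently, one can just differentiate $\overline{e^{w/2}}=-e^{w/2}$ in $\bar z$ and divide by $e^{w/2}\neq 0$). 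Your identification of $\isu$ as the fixed-point set of the antilinear involution $X\mapsto-\sigma_3\overline{X}^t\sigma_3$, combined with $1/\bar\lambda=\lambda$ on $\mathbb{S}^1$ and the observation that a $1$-form $\xi\,dz+\eta\,d\bar z$ is $\isu$-valued on real tangent vectors precisely when $\eta=\rho(\xi)$, likewise gives the final assertion exactly as intended.
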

\subsection{}
 As is well known, constancy of the mean curvature of 
 surfaces in three-dimensional space forms is equivalent to 
 the holomorphicity of the Hopf differential. 
 Moreover constancy of the mean curvature is 
 characterized by harmonicity of appropriate Gauss maps.
 
 To obtain another characterization of minimal surfaces 
 we will introduce the notion of Gauss map for surfaces in 
 $\Nil$.  Let $N$ be the unit normal vector field along the surface $f$ 
 and $f^{-1} N$ the left translation of $N$.
 
 We identify the Lie algebra $\mathfrak{nil}_3$ of $\Nil$ 
 with Euclidean $3$-space 
 $\mathbb E^3$ via the natural basis $\{e_1, e_2,e_3\}$.
 Under this identification, 
 the map $f^{-1} N$ can be considered as a map into the 
 unit two-sphere $\mathbb S^2\subset \mathfrak{nil}_3$. 
 We now consider the \textit{normal Gauss map} $g$ of the surface $f$
 in $\Nil$, \cite{Ino:MiniHeisenberg, Daniel:GaussHeisenberg}:
 The map $g$ is defined as the composition of the stereographic 
 projection $\pi$ from the south pole with
 $f^{-1} N$,  that is, $g = \pi \circ f^{-1} N: \D \to \C \cup \{\infty\}$
 and thus, applying the stereographic projection to  $f^{-1} N$ 
 defined in \eqref{eq:Nell}, we obtain
 \begin{equation}\label{eq:Normal}
 g= \frac{\psi_2}{\overline{\psi_1}} \;.
 \end{equation}
 Note that the unit normal $N$ is represented 
 in terms of the normal Gauss map $g$ as 
 \begin{equation}\label{normalGausstounitnormal}
 f^{-1}N=\frac{1}{1+|g|^2}
 \left(
 2\Re (g) e_1+2\Im (g) e_2+(1-|g|^2)e_3
 \right).
 \end{equation}
 The formula (\ref{normalGausstounitnormal}) implies that 
 $f$ is nowhere vertical if and only if $|g|<1$ or $|g|>1$.
\begin{Remark} 
 \mbox{}
\begin{enumerate}
 \item  If $|g|>1$, then the $e_3$-component of $f^{-1} N$ 
 has a negative sign.
 Therefore such surfaces are called ``downward''. Analogously
 $|g|<1$ the surfaces are called ``upward''.

 \item  The normal Gauss map of a vertical plane satisfies 
 $|g|\equiv 1$. Conversely if the normal Gauss map $g$ of a conformal 
 minimal immersion satisfies $|g| \equiv 1$, 
 then it is a vertical plane.  
\end{enumerate}
\end{Remark} 
\subsection{}
 We have seen in Theorem \ref{thm:CMCcharact}
 and Remark \ref{SpaceGeo}, there exist harmonic maps 
 into the semi-Riemannian symmetric space 
 $\mathrm{SL}_{2}\mathbb{C}/\mathrm{GL}_{1}\mathbb{C}$ 
 associated to constant mean curvature surface in 
 $\Nil$. 
 In view of Lemma \ref{thm:min-sym}, one would expect 
 that minimal surfaces can be characterized by 
 harmonic maps into semi-Riemannian symmetric spaces associated 
 to the real Lie subgroup
$$
 \mathrm{SU}_{1,1}=
 \left\{
 \begin{pmatrix}
 a &  \bar{b}\\
 b & \bar{a}
 \end{pmatrix}
 \in \mathrm{SL}_{2}\mathbb{C}
 \right\}
$$ 
 of $\mathrm{SL}_{2}\mathbb{C}$ 
 with Lie algebra $\isu$. 
 For this purpose we recall a Riemannian 
 symmetric space representation 
 of the hyperbolic $2$-space $\mathbb{H}^2$. Note that 
 the symmetric space $\mathrm{SL}_{2}\mathbb{C}/\mathrm{GL}_{1}\mathbb{C}$
 is regarded as a ``complexification'' of $\mathbb{H}^2$. 
 Since $\mathrm{SL}_{2}\mathbb{C}/\mathrm{GL}_{1}\mathbb{C}
 =\mathrm{SU}_{1,1}^{\mathbb{C}}/\mathrm{U}_{1}^{\mathbb{C}}$.

 Let us equip the Lie algebra $\isu$ with the 
 following Lorentz scalar product $\langle \cdot,\cdot\rangle_m$ 
$$
 \langle X,Y \rangle_{m} =2\mathrm{tr}\>(XY), \ \ X,Y \in \isu.
$$
 Then $\isu$ is identified with Minkowski $3$-space 
 $\mathbb{E}^{2,1}$ as an indefinite scalar product space.
 The hyperbolic $2$-space $\mathbb{H}^2$ of constant curvature 
 $-1$ is realized in $\isu$ as 
 a quadric
$$
\mathbb{H}^2=\{X \in \isu
\
\vert
\ \langle X,X \rangle_m=-1,\ \
\langle X,i\sigma_3\rangle_{m}<0
\}.
$$
 The Lie group $\mathrm{SU}_{1,1}$ acts transitively and 
 isometrically on $\mathbb{H}^2$ via the $\ad$-action.
 The isotropy subgroup at $i\sigma_{3}/2$ is 
 $\mathrm{U}_1$ that is the Lie subgroup of $\mathrm{SU}_{1,1}$ 
 consisting of diagonal matrices. The resulting homogeneous 
 Riemannian $2$-space 
 $\mathbb{H}^{2}=\mathrm{SU}_{1,1}/\mathrm{U}_1$
 is a Riemannian symmetric space with involution 
 $\sigma=\ad (\sigma_3)$. 

 Next we recall the stereographic projection from the 
 hyperbolic $2$-space $\mathbb{H}^2\subset \mathbb{E}^{2,1}$ onto the 
 Poincar{\'e} disc $\mathcal{D}\subset \mathbb{C}$.
 We identify $\isu$ with Minkowski $3$-space 
 $\mathbb{E}^{2,1}$ by the correspondence:
$$
\frac{1}{2}
\left(
\begin{array}{cc}
ri & -p -qi\\
-p+qi & -ri
\end{array} 
\right) \in \isu
\longleftrightarrow (p,q,r) \in \mathbb{E}^{2,1}.
$$
 Under this identification,
 the stereographic projection 
 $\pi_{h}:\mathbb{H}^2\to \mathcal{D}$ with base point 
 $-i\sigma_{3}/2$ is given explicitly by
\begin{equation}\label{eq:Minpro}
\pi_{h}(p,q,r)=\frac{1}{1+r}(p+qi).
\end{equation}
 The inverse mapping of $\pi_{h}^{-1}$ is computed as
$$
\pi_{h}^{-1}(z)=\frac{1}{1-|z|^2}
\left(
2\Re(z),2\Im(z), 1+|z|^2
\right), \ \  |z|<1.
$$
\subsection{}
 Minimal surfaces in $\Nil$ are characterized in terms of 
 the normal Gauss map as follows.
 \begin{Theorem}\label{thm:mincharact}
 Let $f : \D \to \Nil$ be a conformal immersion which is 
 nowhere vertical and $\alpha^{\lambda}$
 the $1$-form defined in \eqref{eq:alpha}.
 Moreover, assume that the unit normal $f^{-1} N$ 
 defined in \eqref{eq:Nell} is upward.
 Then the following statements are equivalent{\rm:}
 \begin{enumerate}
 \item $f$ is a minimal surface.
 \item $d + \alpha^{\lambda}$ is a family of flat connections on $\D \times  \ISU$.
 \item The normal Gauss map $g$ for $f$ is a non-conformal harmonic 
       map into the hyperbolic $2$-space $\mathbb H^2$.
 \end{enumerate}
 \end{Theorem}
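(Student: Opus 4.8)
The plan is to bootstrap everything from the constant mean curvature description already in place. I would first dispose of $(1)\Leftrightarrow(2)$ with essentially no new work. A minimal surface is in particular of constant mean curvature with $H=0$, so Theorem~\ref{thm:CMCcharact} makes $d+\alpha^{\lambda}$ a family of flat connections on $\D\times\GL$, while Lemma~\ref{thm:min-sym} identifies minimality with the requirement that $\alpha^{\lambda}$ be $\isu$-valued; together these say that the flat connections live on $\D\times\ISU$. Conversely, flatness on $\D\times\ISU$ implies flatness on $\D\times\GL$, so $f$ has constant mean curvature by Theorem~\ref{thm:CMCcharact}, and since $\alpha^{\lambda}$ is then $\isu$-valued the final assertion of Lemma~\ref{thm:min-sym} forces $H=0$.

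For the equivalence with (3) I would invoke the zero-curvature characterization of harmonic maps into the Riemannian symmetric space $\mathbb{H}^2=\ISU/\Uone$, whose involution is $\ad(\sigma_3)$. When $\alpha^{\lambda}$ is $\isu$-valued, its $\lambda$-expansion \eqref{eq:U-V} with $H=0$ already has the symmetric-space shape: the diagonal $\lambda^{0}$-term lies in the $+1$-eigenspace $\mathfrak{u}_1$ of $\ad(\sigma_3)$ and the off-diagonal $\lambda^{\pm1}$-terms in the $-1$-eigenspace. Thus flatness of $d+\alpha^{\lambda}$ for all $\lambda\in\mathbb{S}^1$ is exactly the harmonic map equation for the projection of the extended frame $F\in\ISU$, $F^{-1}dF=\alpha^{\lambda}$, into $\mathbb{H}^2$. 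The substantive step is to identify this frame-theoretic harmonic map with the normal Gauss map: I would compute $\ad(F)((i/2)\sigma_3)$ at $\lambda=1$ from the explicit form of $F$ in the generating spinors and check, through the Minkowski model $\isu\cong\mathbb{E}^{2,1}$ and the stereographic projection \eqref{eq:Minpro}, that it equals $\pi_{h}^{-1}\circ g$, with $g=\psi_2/\overline{\psi_1}$ as in \eqref{eq:Normal}, \eqref{normalGausstounitnormal}. The correct second column of $F$ is dictated by the $\ISU$-reality $V(\lambda)=-\sigma_3\overline{U(1/\bar\lambda)}^{t}\sigma_3$ of \eqref{eq:U-Vsymm}, i.e. by applying this symmetry to the solution $(\psi_1,\psi_2)$.

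The non-conformality is handled by a direct computation with the Berdinsky system of Theorem~\ref{thm:Berdinskysystem}. Specializing to $H=0$ and to $e^{w/2}$ purely imaginary, I would differentiate $g=\psi_2/\overline{\psi_1}$ and simplify using the four scalar equations, obtaining $g_z=2ie^{w}/(\overline{\psi_1})^2$ and $g_{\bar z}=2i\bar B/(\overline{\psi_1})^2$. Since $e^{w}\neq0$ for a nowhere vertical surface, $g_z$ never vanishes, so $g$ is nowhere anti-holomorphic; this is the non-conformality in (3), and it is automatic once $f$ is a minimal immersion. In the converse direction the same formula is the key point: the hypothesis that $g$ be non-conformal (nowhere anti-holomorphic) is precisely what guarantees $e^{w/2}\neq0$, i.e. a nonzero Dirac potential, so that the data recovered from the extended frame integrate to a genuine nowhere vertical minimal immersion rather than a degenerate (vertical) configuration.

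The hard part will be the identification in the middle paragraph. Getting the extended frame into $\ISU$ with the right normalization---choosing the fundamental system of the Berdinsky system compatible with the $\ISU$-reality structure, and fixing the gauge $G=\di(e^{-w/4},e^{-w/4})$---and then tracking a point through the two distinct stereographic pictures (the unit normal $f^{-1}N$ of \eqref{eq:Nell} living on $\mathbb{S}^2\subset\mathfrak{nil}_3$ versus $\pi_h^{-1}(g)$ living on $\mathbb{H}^2\subset\isu$) is where sign and orientation errors are easy to make; the \emph{upward} hypothesis $|g|<1$ is exactly what makes $\pi_h^{-1}\circ g$ land in $\mathbb{H}^2$. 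A secondary subtlety, given that minimal surfaces with $B\equiv0$ do occur, is to pin down non-conformality in the precise sense ($g_z\neq0$) that both holds for every minimal immersion and suffices for the converse.
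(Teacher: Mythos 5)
Your proposal follows the paper's own proof essentially step for step: $(1)\Leftrightarrow(2)$ via Theorem \ref{thm:CMCcharact} combined with Lemma \ref{thm:min-sym}; the $\ISU$-valued frame built from the spinor solution $(\psi_1,\psi_2)$ together with its reality-conjugate partner (the paper writes it as the second \emph{row} of $F$, but this is the same construction); the identification $\tfrac{i}{2}\ad(F)\sigma_3=\pi_h^{-1}\circ g$ with $g=\psi_2/\overline{\psi_1}$; the DPW zero-curvature criterion for harmonicity; and the converse via the extended frame of $g$ and the Sym-type reconstruction of the next section. Your explicit formulas $g_z=2ie^{w}/(\overline{\psi_1})^2$ and $g_{\bar z}=2i\bar B/(\overline{\psi_1})^2$ are correct and simply make concrete the paper's one-line non-conformality remark (non-degeneracy of the $(1,0)$-part of the upper-right entry of $\alpha^{\lambda}|_{\lambda=1}$), including your correct observation that non-conformality must be read as $g_z\neq 0$ since $B\equiv 0$ does occur for minimal surfaces.
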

\begin{proof}
 The equivalence of $(1)$ and $(2)$ follows immediately from
 Theorem \ref{thm:CMCcharact} in view of Lemma \ref{thm:min-sym}.

 Next we consider $(2) \Rightarrow (3)$.
 Since $\alpha^{\lambda}$ takes values in $\isu$, there exists
 a solution matrix to \eqref{eq:Lax-Nil} which is contained in $\ISU$.
 We express this matrix in terms of spinors $\psi_1$ and $\psi_2$.
 First we recall that the vector $\psi =  e^{-w/4} ( \psi_1, \psi_2)$
 solves this equation. Since in our case now $e^{w/2}$ is purely imaginary, 
 it is straightforward to verify that also the vector 
 $\psi^* = \overline{e^{-w/4}} ( \overline{\psi_2}, \overline{\psi_1})$ solves 
 the same system of differential equations.

 Let $S$ be the fundamental system of solutions to the 
 system \eqref{eq:Lax-Nil} which has the vector $\psi$ as its 
 first row and the vector $\psi^*$ as its second row.
 Since the coefficient matrices have trace $=0$, 
 we know $\det S =\mathrm{constant}$.
 From the form of $S$ we infer 
 $\det S = |e^{-w/4}|^2( |\psi_1|^2 - |\psi_2|^2)$.

 By assumption, the normal is ``upward'', whence $\det S >0$, 
 see \eqref{eq:Nell}.
 As a consequence, after multiplying $S$ by some positive real constant $c$
 (actually, $ c = 1/\sqrt{2}$)
 we can assume $\det (cS) = 1.$ Taking into account the form of 
 $S$ and $cS$ we see that $cS$ is a solution to  \eqref{eq:Lax-Nil}
 which takes values in $\ISU$.

 Let $F$ be a family of maps such that 
 $F^{-1} d F = \alpha^{\lambda}$ with $F|_{\lambda =1} = c S$
 and define a map $N_m$ by
 $$
 N_m = \frac{i}{2}\ad (F) \sigma_3|_{\lambda=1}.
 $$
 Clearly, $N_m$ takes values in $\mathbb{H}^2\subset \isu$. 
 Let $\isu=\mathfrak{u}_{1} \oplus \mathfrak{p}$ denote 
 the Cartan decomposition of the Lie 
 algebra $\isu$
 induced by the derivative of $\sigma =\ad (\sigma_3)$. 
 Here the linear subspace 
 $\mathfrak{p}$ is identified with the tangent space of 
 $\mathbb{H}^2$ at the origin $i\sigma_{3}/2$. 
 It is known, \cite[Proposition 3.3]{DPW} and Appendix 
 \ref{sc:harmonic-homogeneous},
 that $N_m$ is harmonic if and only if 
 \begin{equation}\label{eq:harmonicity}
 F^{-1} d F = \lambda^{-1} \alpha_1^{\prime} + \alpha_0+
 \lambda \alpha_1^{\prime \prime},
 \end{equation}
 where $\alpha_0: T\D \to \mathfrak{u}_1$ and 
 $\alpha_1: T\D \to \mathfrak{p}$ are $\mathfrak{u}_1$ 
 and $\mathfrak p$ valued $1$-forms respectively,
 and superscripts $\prime$ and $\prime \prime$ denote 
 $(1, 0)$ and $(0, 1)$-part respectively. It is easy to check that 
 $\alpha^{\lambda}$, as defined in \eqref{eq:alpha2} 
 coincides with the right hand side of \eqref{eq:harmonicity}.

 In terms of the generating spinors $\psi_1$ and $\psi_2$, 
 the map $N_m$ can be computed as
 $$
 N_m =  \frac{i}{2}\ad (F) \sigma_3|_{\lambda=1}
 = \frac{i}{2(|\psi_1|^2 - |\psi_2|^2)} 
\begin{pmatrix} |\psi_1|^2 + |\psi_2|^2 & 2 i \psi_1 \psi_2  \\
 2 i \overline{\psi_1} \overline{\psi_2}  & -|\psi_1|^2 - |\psi_2|^2
\end{pmatrix},
 $$
 where we set
 \begin{equation}\label{eq:F}
 F|_{\lambda =1} =\frac{1}{\sqrt{|\psi_1|^2-|\psi_2|^2}} 
 \begin{pmatrix} \sqrt{i}^{-1} \psi_1 & \sqrt{i}^{-1}\psi_2 \\ 
 \sqrt{i} \;\overline{\psi_2} & \sqrt{i} \;\overline{\psi_1} 
 \end{pmatrix}.
\end{equation}
 Applying the stereographic projection
 $\pi_h:\mathbb{H}^2 \subset  \mathbb E^{2,1}\to \mathcal{D} \subset \C$ 
 as in \eqref{eq:Minpro}
 with base point $-i\sigma_{3}/2$ to $N_m$, we obtain
 $$
 \pi_h \circ  N_m = \frac{\psi_2}{\overline{\psi_1}}.
 $$
 As a consequence, the map $\pi_g \circ  N_m$ is actually the normal Gauss map 
 $g$ given in \eqref{eq:Normal} and we have $|g|<1$, 
 since we assumed that $f$ is nowhere vertical and $f^{-1} N$ upward.
 Moreover, $g$ can be considered as a harmonic map into $\mathbb{H}^2$ 
 through the 
 stereographic projection. 
 Since the $(1,0)$-part of the upper right entry of  
 $\alpha^{\lambda}|_{\lambda =1}$ is non-degenerate, the normal Gauss map 
 $g$ is non-conformal. 
 Therefore, $(2)$ implies $(3)$.

 Finally, we consider $(3) \Rightarrow (2)$. By assumption we know that the
 normal Gauss map $g$ is harmonic. 
 Therefore a loop group approach is applicable \cite{DPW}.
 In particular, there is a moving frame $F$ which takes values in $\ISU$ 
 from which $g$ can be obtained by projection 
 to $\ISU / \mathrm{U}_1=\mathbb{H}^2$.  But now the result proven 
 in the next section can be applied and the claim is proven.
\end{proof}
\begin{Remark}
\mbox{}
\begin{enumerate}
\item In the theorem above we have made two additional assumptions:
 ``nowhere vertical'', which means no branch points and ``upward''.
  The first condition is also equivalent with $|\psi_1| \neq |\psi_2|$. 
  Hence the Gauss map does not reach the boundary of $\mathbb H^2$.
  The second condition implies that the Gauss map always stays 
  inside the unit disk, that is, the upper hemisphere of $\mathbb S^2$
  and never move across the unit circle to the lower hemisphere 
  of $\mathbb S^2$.

\item The harmonicity of the normal Gauss map $g$ for a minimal surface $f$ 
 can be seen from the partial differential equation for 
 $g$, see \cite{Daniel:GaussHeisenberg, Ino:MiniHeisenberg}.

\item The minimal surface corresponding to a normal Gauss map $g$ 
 is uniquely determined as follows: Let $f$ and $\hat f$ be two minimal surfaces 
 with the same Gauss map $g$. Then the moving frames $F$ and $\hat F$
 to $f$ and $\hat f$ are in the relation $F =\hat F K_0$ for 
 some $K_0 \in \mathrm{U}_1$, and a straightforward computation shows that 
 $K_0$ is constant. It is easy to see that the respective generating 
 spinors $\psi_i$ and $\hat \psi_i\; (i =1, 2)$ for $f$ and $\hat f$ satisfy  $\psi_i = k_0 \hat \psi_i$ for some constant $k_0$, 
 which means that $\psi_i$ and $\hat \psi_i$ are the same up to a change of coordinates.
\end{enumerate}
\end{Remark}

\begin{Definition}
 Let $f$ be a minimal surface in $\Nil$ and 
 $F$ as above 
 the corresponding $\ISU$-valued solution to the equation
 $F^{-1} d F = \alpha^{\lambda}, \lambda \in \mathbb S^1$, 
 where $\alpha^{\lambda}$ is defined by \eqref{eq:alpha} and 
 $F|_{\lambda =1}$ is given in \eqref{eq:F}. Then $F$ is called 
 \textit{extended frame} of the minimal surface $f$.
\end{Definition}
 For later reference we express the extended frame 
 associated with respect to the generating spinors 
 $\psi_1$ and $\psi_2$ for a minimal surface;
 \begin{equation}\label{eq:extframin}
 F(\lambda) =\frac{1}{\sqrt{|\psi_1(\lambda)|^2-|\psi_2(\lambda)|^2}} 
 \begin{pmatrix}
 \sqrt{i}^{-1} \psi_1(\lambda) & 
 \sqrt{i}^{-1} \psi_2(\lambda) \\ 
 \sqrt{i} \;\overline{\psi_2(\lambda)} & 
  \sqrt{i}\; \overline{\psi_1(\lambda)}
 \end{pmatrix}. 
 \end{equation}
 We would like to note that the functions $\psi_1(\lambda)$ 
 and $\psi_2(\lambda)$ in this expression are only 
 determined up to some positive real function.
%%%%%%%%%%%%%%%%%%%%%%%%%%%%%%%%%%%%%%%%%%%%%%%%%%%%%%%%%%%%%%%%%%%%%%%%%%%%%%
\section{Sym formula}\label{sc:Sym}
% \subsection{}
 In this section, we present an immersion formula for minimal 
 surfaces in $\Nil$.
 This formula will be called the \textit{Sym-formula}. It involves
 exclusively the 
 extended frames of minimal surfaces. We will also explain the relation 
 to another formula for $f$ stated in \cite{Cartier}.
\subsection{}
 We first identify the Lie algebra 
 $\mathfrak{nil}_3$ of $\Nil$ with the 
 Lie algebra $\isu$ as a \textit{real vector space}. 
 In $\isu$, we choose the following basis:
\begin{equation}\label{eq:basis}
 \mathcal{E}_1 = \frac{1}{2} \begin{pmatrix} 0 & i \\ -i &0 \end{pmatrix}, \;\;
 \mathcal{E}_2 = \frac{1}{2} \begin{pmatrix} 0 & -1 \\ -1 & 0 \end{pmatrix}\;\;
 \mbox{and}\;\;\;
 \mathcal{E}_3 = \frac{1}{2} \begin{pmatrix} -i & 0\\ 0 &i \end{pmatrix}.
\end{equation}
 One can see that $\{\mathcal{E}_1,\mathcal{E}_2,\mathcal{E}_3\}$ is an 
 orthogonal basis 
 of $\isu$ with timelike vector 
 $\mathcal{E}_3$. A linear isomorphism $\Xi:\mathfrak{su}_{1,1}\to 
 \mathfrak{nil}_3$ is then given by
 \begin{equation}\label{eq:Nilidenti}
\mathfrak{su}_{1,1} \ni 
x_1 \mathcal{E}_1 + x_2 \mathcal{E}_2 + x_3 \mathcal{E}_3
\longmapsto
x_1 e_1 +  x_2 e_2 +  x_3 e_3 \in \mathfrak{nil}_{3}.
\end{equation}
 Note that the linear isomorphism $\Xi$ is not a Lie algebra 
 isomorphism. 
 Next we consider the exponential map 
 $\exp:\mathfrak{nil}_3\to \mathrm{Nil}_3$ defined in \eqref{exp-map}.
 We define a smooth bijection  
 $\Xi_{\rm nil}:\isu \to \Nil$ by $\Xi_{\rm nil}:=\exp \circ \Xi$.
 In what follows we will take derivatives 
 for functions of $\lambda$.
 Note that for $\lambda=e^{i\theta} \in \mathbb S^1$, we have
 $\partial_{\theta}=i \lambda \partial_{\lambda}$.
\begin{Theorem}\label{thm:Sym}
 Let $F$ be the extended frame
 for some minimal surface, 
 $m$ and $N_m$ respectively the maps 
 \begin{equation}\label{eq:SymMin}
 m=-i \lambda (\partial_{\lambda} F) F^{-1} 
 -N_m\;\;
 \mbox{and} \;\;
 N_m= \frac{i}{2} \ad (F) \sigma_3.
 \end{equation}
 Moreover, define a map  $f^{\lambda}:\mathbb{D}\to \mathrm{Nil}_3$ by
 $f^{\lambda}:=\Xi_{\mathrm{nil}}\circ \hat{f^{\lambda}}$ with
\begin{equation}\label{eq:symNil}
 \hat f^{\lambda} = 
    \left.
    \left(m^o -\frac{i}{2} \lambda (\partial_{\lambda} m)^d\right)
    \;\right|_{\lambda \in \mathbb{S}^1}, 
\end{equation}
 where the superscripts ``$o$'' and ``$d$'' denote the off-diagonal and 
 diagonal part, 
 respectively. Then, for each $\lambda \in \mathbb{S}^1$, 
 the map $f^{\lambda}$ is a minimal surface in $\Nil$ and 
 $N_m$ is the normal Gauss map of $f^{\lambda}$. 
 In particular, $f^{\lambda}|_{\lambda =1}$ gives 
 the original minimal surface up to a rigid motion.
\end{Theorem}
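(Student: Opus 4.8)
The plan is to verify that $f^{\lambda}$ reproduces, through its left logarithmic derivative, exactly the spinorial Weierstrass data \eqref{eq:phi} of a minimal surface, and then to invoke the characterization of conformal minimal immersions already established. Everything rests on two differentiation identities for the extended frame $F$, which solves $F_z = F U(\lambda)$, $F_{\bar z} = F V(\lambda)$ with $U(\lambda), V(\lambda)$ as in \eqref{eq:U-VCMC}. First I would record that
\begin{equation*}
\partial_z\!\left(-i\lambda (\partial_\lambda F)F^{-1}\right) = -i\lambda\,\ad(F)(\partial_\lambda U(\lambda)), \qquad \partial_z N_m = \tfrac{i}{2}\,\ad(F)[\,U(\lambda),\sigma_3\,],
\end{equation*}
both of which follow by differentiating $F_z = FU(\lambda)$ and using that the mixed $\lambda$- and $z$-derivatives of $F$ commute. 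Subtracting these gives the compact formula $\partial_z m = -2i\lambda^{-1}e^{w/2}\,\ad(F)\,\mathcal N_+$, where $\mathcal N_+ = \begin{pmatrix} 0 & 1 \\ 0 & 0\end{pmatrix}$; the analogous computation in $\bar z$ produces the conjugate lower-triangular contribution.

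Next I would substitute the explicit spinor form \eqref{eq:extframin} of $F$ and use $e^{w/2}=\tfrac{i}{4}h=\tfrac{i}{2}(|\psi_1|^2-|\psi_2|^2)$, which is valid since $f$ is minimal and hence $e^{w/2}$ is purely imaginary by Lemma \ref{thm:min-sym}. A direct computation of $\ad(F)\mathcal N_+$ then shows that the off-diagonal part of $\partial_z m$ is the spinorial expression $\phi_1\mathcal E_1 + \phi_2\mathcal E_2$ built as in \eqref{eq:phi} from the spinors of $f^{\lambda}$ (at $\lambda=1$ these are exactly $\phi_1,\phi_2$ of the original surface), while its diagonal part equals $-\lambda^{-1}\psi_1\overline{\psi_2}\,\sigma_3$. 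Since $\hat f^{\lambda} = m^o - \tfrac{i}{2}\lambda(\partial_\lambda m)^d$ is valued in the real form $\isu\cong\R^3$, the composite $f^{\lambda}=\Xi_{\mathrm{nil}}\circ\hat f^{\lambda}$ does take values in $\Nil$, and the off-diagonal block already matches the horizontal $e_1,e_2$-components of the left logarithmic derivative under the identification $\Xi$ of \eqref{eq:Nilidenti}.

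The delicate step, and the one I expect to be the main obstacle, is the vertical ($e_3$) component. Because $\Nil$ is non-abelian, the $e_3$-part of $(f^{\lambda})^{-1}\partial_z f^{\lambda}$ is not simply $\partial_z x_3$ but the contact form $\omega = dx_3 + \tfrac12(x_2\,dx_1 - x_1\,dx_2)$ applied to $\partial_z f^{\lambda}$. The diagonal correction term $-\tfrac{i}{2}\lambda(\partial_\lambda m)^d$ is engineered precisely so that, after differentiating the $\lambda$-dependent diagonal datum $-\lambda^{-1}\psi_1\overline{\psi_2}\,\sigma_3$ in $\lambda$ and feeding the horizontal coordinates through $\omega$, this component collapses to $\phi_3 = 2\psi_1\overline{\psi_2}$. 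Carrying this out requires simultaneously tracking the $\lambda$-dependence of the spinors (through $B^{\lambda}=\lambda^{-2}B$), the $\lambda^{\pm 1}$ powers, and the $\sqrt{i}$ factors in \eqref{eq:extframin}; this is the one place where the exact shape of the Sym formula, and in particular the diagonal projection, is indispensable.

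Finally I would close the argument structurally. With all three $\phi_k$ matching \eqref{eq:phi}, the form $\varPhi = \sum_k \phi_k e_k = (f^{\lambda})^{-1}\partial_z f^{\lambda}$ satisfies the conformality condition \eqref{conformalimmersion} and, since the $\ISU$-frame encodes a harmonic normal Gauss map, also the minimal surface equation \eqref{harm+integ}; hence $f^{\lambda}$ is a conformal minimal immersion by the converse part of the theorem in Section \ref{sc:Preliminaries} (equivalently, via Proposition \ref{prop:Integrablity}). That $N_m$ is its normal Gauss map follows by projecting $N_m=\tfrac{i}{2}\ad(F)\sigma_3$ stereographically to $g=\psi_2/\overline{\psi_1}$ as in \eqref{eq:Normal}, exactly as in the proof of Theorem \ref{thm:mincharact}. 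At $\lambda=1$ the spinors $\psi_1,\psi_2$ are those of the original surface, so $f^{\lambda}|_{\lambda=1}$ and $f$ share the same $f^{-1}df$; by the uniqueness up to initial condition in Proposition \ref{prop:Integrablity} they then agree up to a left translation, that is, up to a rigid motion of $\Nil$.
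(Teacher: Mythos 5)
Your opening computation is correct and is in fact identical to the paper's: differentiating $F_z=FU(\lambda)$ in $\lambda$ and $N_m$ in $z$ gives exactly $\partial_z m=-2i\lambda^{-1}e^{w/2}\ad(F)\left(\begin{smallmatrix}0&1\\0&0\end{smallmatrix}\right)$, which is \eqref{eq:derivative}, and your identification of the off-diagonal part with $\phi_1\mathcal{E}_1+\phi_2\mathcal{E}_2$ and of the diagonal part with $-\lambda^{-1}\psi_1\overline{\psi_2}\,\sigma_3$ matches \eqref{eq:phiequation}. The genuine gap is that the step you yourself flag as ``the main obstacle''---showing that the diagonal Sym term $-\tfrac{i}{2}\lambda(\partial_\lambda m)^d$ makes the vertical component of $(f^{\lambda})^{-1}\partial_z f^{\lambda}$ collapse to $\phi_3$---is asserted (``is engineered precisely so that\dots'') but never carried out, and this is the entire content of the Sym formula; nothing in your later structural argument can substitute for it. Moreover, the route you sketch for it (tracking the $\lambda$-dependence of the spinors through $B^{\lambda}=\lambda^{-2}B$ and the $\sqrt{i}$ factors of \eqref{eq:extframin}) is not viable as stated, because $\partial_\lambda\psi_j(\lambda)$ is not local data determined by the potential. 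The paper's mechanism avoids this entirely: since $i\lambda(\partial_\lambda F)F^{-1}=-(m+N_m)$, a Leibniz computation on \eqref{eq:derivative} gives the closed identity
\[
\partial_z\bigl(i\lambda\partial_\lambda m\bigr)=-i\,\partial_z m-\bigl[m+N_m,\partial_z m\bigr],
\]
and then the two diagonal-part evaluations $[-N_m,\partial_z m]^d=-i(\partial_z m)^d$ and $-[m,\partial_z m]^d=\bigl(\phi_1\!\int\!\phi_2\,dz-\phi_2\!\int\!\phi_1\,dz\bigr)\mathcal{E}_3$ yield \eqref{eq:partialfm}; the leftover integral terms are precisely the ones cancelled by the contact-form correction $\tfrac12(x_2\,\partial_z x_1-x_1\,\partial_z x_2)$ when passing from $\partial_z\hat f^{\lambda}$ in exponential coordinates to the left logarithmic derivative, which is how \eqref{eq:immersion} is obtained.

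There is a second, logical, problem in your closing paragraph: you deduce the minimal surface equation \eqref{harm+integ} for $f^{\lambda}$ from ``the $\ISU$-frame encodes a harmonic normal Gauss map.'' That implication is $(3)\Rightarrow(1)$ of Theorem~\ref{thm:mincharact}, and the paper's proof of that very implication defers to ``the result proven in the next section,'' i.e.\ to Theorem~\ref{thm:Sym} itself; so within this paper your argument is circular. The non-circular conclusion, which the paper uses, is read off from \eqref{eq:immersion}: it exhibits $\lambda^{-1/2}\psi_1(\lambda)$ and $\lambda^{1/2}\psi_2(\lambda)$ as generating spinors of $f^{\lambda}$, so the Dirac potential of $f^{\lambda}$ equals $\tfrac{i}{2}\bigl(|\psi_1(\lambda)|^2-|\psi_2(\lambda)|^2\bigr)=e^{w/2}$, which is purely imaginary; comparing with \eqref{def-exp(w/2)} forces $H=0$ since $e^{u/2}>0$. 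Your remaining points---conformality from the spinor representation, the Gauss map statement via stereographic projection as in Theorem~\ref{thm:mincharact}, and agreement at $\lambda=1$ up to a rigid motion by uniqueness in Proposition~\ref{prop:Integrablity}---are fine once the two issues above are repaired.
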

\begin{proof}
 Since $m$ and $i \lambda (\partial_{\lambda} m^{d})$ take values
 in the Lie algebra of $\ISU$, the map $f^{\lambda}$ takes values in $\Nil$
 via the bijection  $\Xi_{\rm nil}$.
 Let us express the extended frame $F(\lambda)$  by $\psi_{1}(\lambda)$ and 
 $\psi_{2}(\lambda)$ as in \eqref{eq:extframin}.
 We note that $\psi_1(\lambda)$ and $\psi_2(\lambda)$ depend on $\lambda$
 and for each $\lambda \in  \mathbb S^1$, the extended frame 
 $F$ takes values in $\ISU$.
 Then a straightforward computation shows that 
\begin{eqnarray}\label{eq:derivative}
 \partial_z m &=& \ad (F)
 \left(-i \lambda \partial_{\lambda} U^{\lambda}-\frac{i}{2}[U^{\lambda}, \sigma_3] \right) \\ \nonumber &=&
 -2 i\lambda^{-1} e^{w/2} \ad (F)
 \begin{pmatrix} 
 0 & 1 \\ 0 & 0
 \end{pmatrix}  \\ 
\nonumber
&=& 
\nonumber
\lambda^{-1}
\begin{pmatrix}
 - \psi_1(\lambda) \overline{\psi_2(\lambda)} & - i \psi_1(\lambda)^2 \\
 -i \overline{\psi_2(\lambda)}^2 & \psi_1(\lambda) \overline{\psi_2(\lambda)}
\end{pmatrix}.
\end{eqnarray}
 Thus 
\begin{equation}\label{eq:phiequation}
\partial_z m= 
\phi_{1}(\lambda) \mathcal{E}_1 + \phi_{2}(\lambda) \mathcal{E}_2
-i \phi_3(\lambda) \mathcal E_3
\end{equation}
 with
\begin{equation*}
 \phi_{1}(\lambda) = \lambda^{-1}\left(\overline{\psi_2(\lambda)}^2 - \psi_1(\lambda)^2\right),\;\;
 \phi_{2}(\lambda) = i\lambda^{-1}\left(\overline{\psi_2(\lambda)}^2 + \psi_1(\lambda)^2\right)
\end{equation*}
and
\begin{equation*}
\phi_3(\lambda) = 2\lambda^{-1} \psi_1(\lambda) \overline{\psi_2(\lambda)}.
 \end{equation*}
 Thus using \eqref{eq:derivative}, 
 the derivative of $m$ with respect to $z$ and $\lambda$ can be 
 computed as
\begin{eqnarray}\label{eq:derivative2}
 \partial_z ( i \lambda (\partial_{\lambda} m)) = 
 i \lambda \partial_{\lambda} (\partial_z m) & = &
 i \lambda \partial_{\lambda} \left(-2 i \lambda^{-1} e^{w/2} \ad (F) 
 \begin{pmatrix} 
 0 & 1 \\ 0 & 0
 \end{pmatrix}
 \right), \\
 &=& -i(\partial_z m)
   -\left[m + N_m, \partial_z m\right].
\nonumber
\end{eqnarray}
 Here $[a, b]$ denotes the 
 usual bracket of matrices, that is, $[a, b] = ab -b a$.
 Using \eqref{eq:derivative}, we have 
\begin{equation*}
 \left[-N_m, \partial_z m \right]^d 
 = - i (\partial_z m)^d 
\end{equation*}
 and
\begin{equation*}
 - [m, \partial_z m]^d = \left(\phi_1(\lambda) 
 \int \phi_2(\lambda) \, dz - \phi_2(\lambda) \int \phi_1(\lambda) \, dz \right) 
 \mathcal{E}_3.
\end{equation*}
 Thus we have
\begin{equation}\label{eq:partialfm}
\partial_z \left( -\frac{i \lambda \partial_{\lambda} m}{2}^d\right) 
 = \left(
  \phi_3(\lambda) -\frac{1}{2} \phi_1(\lambda) \int \phi_2(\lambda)\, dz  
  + \frac{1}{2}\phi_2(\lambda) \int \phi_1(\lambda)\, dz 
  \right) 
 \mathcal{E}_3.
\end{equation}
 Therefore, combining \eqref{eq:phiequation} and \eqref{eq:partialfm}, we obtain
$$
  \partial_z \hat f^{\lambda} = \phi_1(\lambda) \mathcal{E}_1 + \phi_2(\lambda) \mathcal{E}_2 
  +\left(\phi_3(\lambda)  - \frac{1}{2}\phi_1(\lambda) \int \phi_2(\lambda)\, dz
  + \frac{1}{2}\phi_2(\lambda) \int \phi_1(\lambda)\, dz
 \right) \mathcal{E}_3.
$$
 We now use the identification \eqref{eq:Nilidenti} with the left translation 
 $(f^{\lambda})^{-1}$, that is, 
 \begin{equation}\label{eq:immersion}
 (f^{\lambda})^{-1} \partial_z f^{\lambda} =  \phi_1(\lambda) e_1 + 
 \phi_2(\lambda) e_2 + \phi_3(\lambda) e_3.
\end{equation}
 Thus $\lambda^{-1/2}\psi_1(\lambda)$ and $\lambda^{1/2}\psi_2(\lambda)$ 
 are spinors for $f^{\lambda}$ for 
 each $\lambda \in \mathbb{S}^1$. 
 In particular, the function
 $$
 \frac{i}{2}(|\lambda^{-1/2}\psi_1(\lambda)|^2 
 - |\lambda^{1/2}\psi_2(\lambda)|^2)
 = e^{w/2}
 $$ 
 does not 
 depend on $\lambda$ and implies that 
 the mean curvature $H$ is equal to zero. 
 Moreover, the conformal factor of the induced 
 metric of $f^{\lambda}$ is given by  
 $$
 e^{u} = 4(|\psi_1(\lambda)|^2 + |\psi_2(\lambda)|^2)^2.
 $$
 This metric is non-degenerate, since $F$ takes values in  $\ISU$ 
 for each $\lambda \in \mathbb S^1$, 
 that is, $|\psi_1(\lambda)|$ and $|\psi_2(\lambda)|$ 
 are not simultaneously equal to zero.
 Thus the map $f^{\lambda}$ actually defines a minimal surface 
 in $\Nil$ for each $\lambda \in \mathbb{S}^1$. 
 By the same argument as in the proof of Theorem \ref{thm:CMCcharact}
 for the spinors  $\lambda^{-1/2}\psi_1(\lambda)$ 
 and $\lambda^{1/2}\psi_2(\lambda)$, 
 the map $N_m$ is the normal Gauss map for the minimal surface $f^{\lambda}$. 
 Then, at $\lambda =1$, the minimal surface given by $f^{\lambda}|_{\lambda=1}$ 
 and the original minimal surface have the same metric $e^u dz d\bar z$, the 
 holomorphic differential $Bdz^2$ and the support $h \sdz \sdzb$.
 Thus up to a rigid motion it is the same minimal surface. 
 This completes the proof.
\end{proof}
\begin{Remark}
\mbox{}
\begin{enumerate}
\item For each $\lambda \in \mathbb S^1$ the immersion $m$
      defined in \eqref{eq:SymMin} gives a spacelike surface of 
      constant mean curvature in Minkowski $3$-space 
      $\mathbb E^{2,1} =\mathfrak{su}_{1,1}$, see \cite{Kob:Realforms, BRS:Min}.
      It is well known that the Sym formula 
      for constant mean curvature 
      surfaces in $\mathbb E^{2,1}$(or $\mathbb E^3$) involves the first 
      derivative 
      with respect to $\lambda$ only, however, the formula for $\Nil$
      involves the second derivative with respect to $\lambda$ as well.
      Purely technically the reason is the subtraction term.
      But there should be a better geometric reason.

 \item  Theorem \ref{thm:Sym} gives clear geometric meaning for the immersion formula for $f$. 
  The Sym formula \eqref{eq:symNil} for $f$ 
  was written down in \cite{Cartier} in a different way.
\end{enumerate}
\end{Remark}

 In the following Corollary, we compute 
 the Abresch-Rosenberg differential $4 B^{\lambda} dz^2$ for 
 the $1$-parameter family $f^{\lambda}$ in Theorem \ref{thm:Sym}
 and it implies that the family $f^{\lambda}$ actually 
 defines the associated family.
\begin{Corollary}\label{coro:associate}
 Let $f$ be a conformal minimal surface in $\Nil$ and 
 $f^{\lambda}$  the family of surfaces defined by \eqref{eq:symNil}.
 Then $f^{\lambda}$ preserves 
 the mean curvature $(=0)$ and the support. The
 Abresch-Rosenberg differential $4B^{\lambda}dz^2$ for $f^{\lambda}$
 is given by $4B^{\lambda} dz^2= 4 \lambda^{-2} Bdz^2$, 
 where $4 Bdz^2$ is the 
 Abresch-Rosenberg differential for $f$. Therefore 
 $\{f^{\lambda}\}_{\lambda \in \mathbb S^1}$
 is the associated family of the minimal surface $f$.
\end{Corollary}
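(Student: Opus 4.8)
The plan is to treat the two assertions separately, since the first is essentially already contained in the proof of Theorem~\ref{thm:Sym}. There it is shown that $f^{\lambda}$ is for every $\lambda \in \mathbb{S}^1$ a minimal immersion (so $H^{\lambda} \equiv 0$ is automatic) whose generating spinors are $\psi_1^{\lambda} = \lambda^{-1/2}\psi_1(\lambda)$ and $\psi_2^{\lambda} = \lambda^{1/2}\psi_2(\lambda)$, and that the quantity $\tfrac{i}{2}(|\psi_1^{\lambda}|^2 - |\psi_2^{\lambda}|^2) = e^{w/2}$ is independent of $\lambda$. Since $H^{\lambda} = 0$ and $e^{w^{\lambda}/2} = \tfrac{i}{4}h^{\lambda}$ by \eqref{def-exp(w/2)}, this already gives $h^{\lambda} = h$ and $w^{\lambda} = w$; hence the mean curvature and the support are preserved, which is the first claim.

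For the second claim I would avoid substituting the spinors directly into the Hopf differential \eqref{eq:Hopfdiff}, because the functions $\psi_j(\lambda)$ carry their own implicit $\lambda$-dependence through the frame, so the naive power of $\lambda$ pulled out of $\psi_j^{\lambda}$ is misleading. Instead I would read $B^{\lambda}$ off the extended frame. Writing $F^{[\lambda]}$ for the extended frame \eqref{eq:extframin} of the surface $f^{\lambda}$, formed from its spinors $\psi_1^{\lambda}, \psi_2^{\lambda}$, a direct inspection of \eqref{eq:extframin} shows
\begin{equation*}
F^{[\lambda]} = F(\lambda)\, D(\lambda), \qquad D(\lambda) = \di(\lambda^{-1/2}, \lambda^{1/2}),
\end{equation*}
since the columns of \eqref{eq:extframin} are exactly the columns of $F(\lambda)$ rescaled by $\lambda^{-1/2}$ and $\lambda^{1/2}$ (here one uses $\overline{\lambda^{1/2}} = \lambda^{-1/2}$ for $\lambda \in \mathbb{S}^1$, so the normalising factors agree). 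As $D(\lambda)$ is independent of $z$ and $\bar z$, conjugation gives $(F^{[\lambda]})^{-1}(F^{[\lambda]})_z = D(\lambda)^{-1}U(\lambda)D(\lambda)$ with $U(\lambda)$ as in \eqref{eq:U-VCMC}. The diagonal entries are unchanged, the $(1,2)$-entry becomes $-e^{w/2}$, and the $(2,1)$-entry becomes $\lambda^{-2}Be^{-w/2}$.

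Finally I would match this against the gauged Berdinsky system of $f^{\lambda}$. By the construction behind \eqref{eq:F}--\eqref{eq:extframin} (the solution matrix $S$ of Theorem~\ref{thm:mincharact}, whose rows solve \eqref{eq:Lax-Nil}), the extended frame $F^{[\lambda]}$ satisfies $(F^{[\lambda]})^{-1}(F^{[\lambda]})_z = U^{f^{\lambda}}$, the gauged Berdinsky matrix of $f^{\lambda}$, whose $(2,1)$-entry, by \eqref{eq:U-VCMC} at $\lambda = 1$ applied to $f^{\lambda}$, is $B^{\lambda}e^{-w^{\lambda}/2} = B^{\lambda}e^{-w/2}$. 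Comparing $(2,1)$-entries yields $B^{\lambda} = \lambda^{-2}B$, i.e. $B^{\lambda}dz^2 = \lambda^{-2}Bdz^2$. Together with the $\lambda$-invariance of the support, this is exactly the data characterising the associated family in Subsection~\ref{subsc:associated}, so $\{f^{\lambda}\}_{\lambda \in \mathbb{S}^1}$ is the associated family of $f$. The one step needing care is the bookkeeping of this last paragraph: one must make sure that the Maurer--Cartan form of the extended frame of $f^{\lambda}$ really is the gauged Berdinsky matrix with $B^{\lambda}$ sitting in the $(2,1)$-slot, so that the conjugation legitimately computes $B^{\lambda}$; this is where the precise normalisation in \eqref{eq:extframin} and the diagonal gauge $G = \di(e^{-w/4}, e^{-w/4})$ enter. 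The conjugation itself is then a one-line calculation.
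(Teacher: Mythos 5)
Your proposal is correct and takes essentially the same route as the paper: both arguments take the spinors $\lambda^{\mp 1/2}\psi_j(\lambda)$ of $f^{\lambda}$ and the $\lambda$-invariance of $e^{w/2}$ from the proof of Theorem \ref{thm:Sym}, and both identify $B^{\lambda}=\lambda^{-2}B$ by comparing the system these spinors satisfy with the Berdinsky system. Your conjugation $(F^{[\lambda]})^{-1}(F^{[\lambda]})_z = D(\lambda)^{-1}U(\lambda)D(\lambda)$ with $D(\lambda)=\di(\lambda^{-1/2},\lambda^{1/2})$ is just the matrix form of the paper's scalar equation \eqref{eq:associated}, which the paper derives from \eqref{eq:alpha2} and \eqref{eq:extframin} and compares with the ungauged system \eqref{eq:Lax-Niltilde} rather than the gauged one \eqref{eq:Lax-Nil} --- a purely presentational difference.
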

\begin{table}[t]
\extrarowheight=1mm
\begin{tabular}{|c|c|c|c|c|}\hline
 {\small surface} &{\small mean curvature} & 
 {\small  metric} & {\small holo. differential} 
 &{\small support} \\[1mm]\hline
 $f(=f^{\lambda}|_{\lambda =1})$ & $H =0$ & $\exp (u)dz d\bar z$ & $B dz^2$ & $h \sdz
 \sdzb$ \\[1mm]\hline
 $f^{\lambda}$ &$H =0$ &$\exp (u^{\lambda}) dz d\bar z$ & $\lambda^{-2} B dz^2$ &$h \sdz \sdzb$\\[1mm]\hline
\end{tabular}
\vspace{0.3cm}
 \caption{An original minimal surface $f$ and the deformation family $f^{\lambda}$.}
\end{table}
\begin{proof}
 From Theorem \ref{thm:CMCcharact} it is clear that 
 a minimal surface $f$ in $\Nil$ defines a $1$-parameter 
 family $f^{\lambda}$
 of the minimal immersion $f$ such that $f^{\lambda}|_{\lambda=1} =f$, 
 and $f^{\lambda}$ is 
 a minimal surface for each $\lambda \in \mathbb{S}^1$. 
 The spinors for $f^{\lambda}$ are given as the functions 
 $\lambda^{-1/2} \psi_1(\lambda)$ and $\lambda^{1/2} \psi_2(\lambda)$, 
 see the proof of Theorem \ref{thm:CMCcharact}. 
 Using \eqref{eq:alpha2} and \eqref{eq:extframin} we obtain
 \begin{equation}\label{eq:associated}
 (\lambda^{-1/2} \psi_1(\lambda))_z 
 =  \frac{1}{2}w_z(\lambda^{-1/2} \psi_1(\lambda)) 
   + (\lambda^{1/2}\psi_2(\lambda))
 (\lambda^{-2} B) e^{-w/2}.
\end{equation}
 Comparing \eqref{eq:associated} to \eqref{eq:Lax-Niltilde}, 
 the Abresch-Rosenberg differential $4B^{\lambda}dz^2$ for $f^{\lambda}$ 
 is $4\lambda^{-2} Bdz^2$. Moreover $B^{\lambda}$ 
 is holomorphic, since $B$ is holomorphic. 
 We note that the support function for $f^{\lambda}$
 is given by $e^{w/2} = i(|\psi_1(\lambda)|^2-|\psi_2(\lambda)|^2)/2$,
 which is invariant in this family. 
 Therefore this $1$-parameter family $f^{\lambda}$ is 
 the associated family as explained in Section \ref{subsc:associated}.
\end{proof}
\begin{Remark}
 In general, the metric $e^{u} dz d \bar z 
  = 4(|\psi_1(\lambda)|^2+|\psi_2(\lambda)|^2)^2dz d\bar z$ is 
  not preserved in the associated family. 
  This is in contrast to the case of an associated family of
  nonzero constant mean curvature surfaces in $\mathbb E^3$ or 
  $\mathbb E^{2,1}$, where the metric is preserved.
\end{Remark}
\section{Potentials for minimal surfaces}\label{sc:potential}
 In this section, we show that 
 pairs of meromorphic and anti-meromorphic $1$-forms, 
 the so-called \textit{normalized potentials}, are obtained
 from the extended frames of minimal surfaces 
 in $\Nil$ via the Birkhoff decomposition of loop groups.

 We first define the twisted $\SL$ loop group as a space of continuous maps 
 from $\mathbb{S}^1$ to the Lie group $\SL$, that is, 
 $$
 \LSL =\{g : \mathbb{S}^1 \to \SL \;|\; g(-\lambda) = \sigma g(\lambda) \},
 $$
 where $\sigma =\ad (\sigma_3)$.  We restrict 
 our attention to loops in $\LSL$ such that 
 the associate Fourier series of the loops are absolutely convergent.
 Such loops determine a Banach algebra, the so-called  
 \textit{Wiener algebra}, and 
 it induces a topology on $\LSL$, 
 the so-called  \textit{Wiener topology}.
 From now on, we consider only $\LSL$ equipped with the Wiener topology.

 Let $D^{\pm}$ denote respective the inside of unit disk and
 the union of outside of the unit disk and infinity.
 We define \textit{plus} and \textit{minus} loop subgroups of $\LSL$;
\begin{equation}
\LSLPM=\{ g \in \LSL \;|\; \mbox{$g$ can be extended holomorphically to $D^{\pm}$} \}.
\end{equation}
 By $\LSLPI$ we denote the subgroup of 
 elements of $\LSLP$ which take the value identity at zero.
 Similarly, by $\LSLMI$ we denote the subgroup of 
 elements of $\LSLM$ which take the value identity at infinity.
 
 We also define the $\ISU$-loop group as follows:  
 \begin{equation}\label{eq:SU-loop}
 \LISU =\left\{ g  \in \LSL  \;|\;  \sigma_3 \overline{g(1/\bar \lambda)}^{t-1} \sigma_3 
 = g(\lambda)\right\}.
 \end{equation}
 It is clear that extended frames of minimal surfaces in $\Nil$ are 
 elements in $\LISU$.
\begin{Theorem}[Birkhoff decomposition, \cite{PreS:LoopGroup}]\label{thm:Birkhoff}
 The respective multiplication maps
 \begin{equation}
 \LSLMI \times \LSLP \to \LSL \;\;\mbox{and} \;\;\LSLPI \times \LSLM \to \LSL
 \end{equation}
 are analytic diffeomorphisms onto open dense subsets of $\LSL$.
\end{Theorem}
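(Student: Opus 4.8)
The plan is to deduce the statement from the classical Birkhoff factorization theorem of \cite{PreS:LoopGroup} for untwisted $\SL$-valued loops, and then to account for the two additional features present here: the normalization defining the starred subgroups and the twisting by $\sigma=\ad(\sigma_3)$. First I would recall that every $g\in\Lambda\SL$ in the Wiener class admits a factorization $g=g_-\,\di(\lambda^{k},\lambda^{-k})\,g_+$, with $g_-$ and $g_+$ extending holomorphically to $D^-$ and $D^+$ respectively and $k\in\Z$ the partial index (the two exponents sum to zero because $\det g=1$). The cell $k=0$ is the \emph{big cell}, and the essential analytic input I would quote from \cite{PreS:LoopGroup} is that this big cell is open and dense in $\Lambda\SL$.

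Next I would fix the normalization so as to land in the starred subgroups. Given an untwisted big-cell element $g=g_-g_+$, the constant matrix $g_-(\infty)\in\SL$ lies in both ambient factors, so I rewrite $g=\bigl(g_-\,g_-(\infty)^{-1}\bigr)\bigl(g_-(\infty)\,g_+\bigr)$; the first factor is now a minus loop normalized to $I$ at $\infty$, the second a plus loop, and both remain $\SL$-valued. Uniqueness of this normalized factorization is the standard Liouville argument: if $a_-a_+=b_-b_+$ with the minus factors normalized at $\infty$, then $b_-^{-1}a_-=b_+a_+^{-1}$ extends holomorphically to all of $\C\cup\{\infty\}$ with value $I$ at $\infty$, hence equals $I$, forcing $a_\mp=b_\mp$.

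The key structural point is that the twisting descends to the factors automatically by uniqueness. If $g\in\LSL$, so that $g(-\lambda)=\sigma_3\,g(\lambda)\,\sigma_3^{-1}$, and $g=g_-g_+$ is its normalized big-cell factorization, then $\lambda\mapsto\sigma_3\,g_-(\lambda)\,\sigma_3^{-1}$ is again a minus loop normalized at $\infty$ and $\lambda\mapsto\sigma_3\,g_+(\lambda)\,\sigma_3^{-1}$ is again a plus loop, while their product equals $g(-\lambda)$. Comparing with the normalized factorization of $g(-\lambda)$ and invoking uniqueness forces $g_\mp(-\lambda)=\sigma_3\,g_\mp(\lambda)\,\sigma_3^{-1}$, so the factors are themselves $\sigma$-twisted; that is, $g_-\in\LSLMI$ and $g_+\in\LSLP$. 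Thus the untwisted big-cell bijection restricts to a bijection of $\LSLMI\times\LSLP$ onto the big cell of $\LSL$, which is open and dense in $\LSL$ as the intersection of $\LSL$ with the untwisted big cell. The second decomposition $\LSLPI\times\LSLM\to\LSL$ follows verbatim after interchanging the roles of $D^+$ and $D^-$.

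Finally, for the analyticity I would use the Banach--Lie group structure supplied by the Wiener algebra. Multiplication is manifestly analytic; to see that its inverse is analytic I would apply the analytic inverse function theorem in Banach spaces, checking that the differential of the multiplication map is a linear isomorphism at each big-cell point. By left and right translation it suffices to check this at the identity, where the differential is the map $(X_-,X_+)\mapsto X_-+X_+$ from $\Lie(\LSLMI)\oplus\Lie(\LSLP)$ to $\lsl$. This is precisely the splitting of $\lsl$ into strictly-negative and non-negative Fourier modes, a topological direct sum of closed subspaces because truncation of Laurent coefficients is bounded in the Wiener norm. The main obstacle is genuinely the openness and density of the big cell, which is the substantive content of the classical theorem and which I would quote from \cite{PreS:LoopGroup}; the remaining ingredients---the normalization, the uniqueness-driven descent of the twisting, and the Banach inverse function theorem---are then routine.
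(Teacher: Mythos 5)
The paper offers no proof of this theorem at all---it is quoted directly from Pressley--Segal---so your attempt can only be measured against the standard argument in the literature, and your outline follows exactly that standard route: untwisted Birkhoff factorization with partial indices, normalization of the minus factor at $\infty$, Liouville-type uniqueness, descent of the twisting to the factors via uniqueness, and analyticity of the inverse via the Banach inverse function theorem together with the boundedness of Fourier truncation in the Wiener norm. All of those steps are sound as you state them; in particular the uniqueness-driven descent of the $\sigma$-symmetry to the factors is exactly the right mechanism, and it correctly identifies the twisted big cell as $\LSL \cap (\text{untwisted big cell})$.

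There is, however, one genuine gap: density. You assert that the twisted big cell is ``open and dense in $\LSL$ as the intersection of $\LSL$ with the untwisted big cell.'' Openness restricts to subspaces, but density does not: a dense open subset of a group can meet a closed subgroup in a set that is far from dense in it, and can even miss it entirely (consider $\R\times\{0\}\subset\R^2$ and the dense open set $\R^2\setminus(\R\times\{0\})$). So the very property you single out as ``the substantive content of the classical theorem'' is precisely the one your reduction fails to transfer to the twisted group; it needs a separate argument. Two standard repairs: (i) since $\sigma=\ad(\sigma_3)$ is inner, the assignment $g(\lambda)\mapsto h(\mu)$ defined by $h(\lambda^2)=Q(\lambda)\,g(\lambda)\,Q(\lambda)^{-1}$ with $Q(\lambda)=\di(\lambda^{1/2},\lambda^{-1/2})$ is well defined (the conjugation kills the half-integer ambiguity because twisted entries have fixed parity) and gives an isomorphism of Banach Lie groups from $\LSL$ onto the untwisted loop group of $\SL$ in the variable $\mu=\lambda^2$, carrying plus loops to plus loops and minus loops to minus loops; hence it maps the twisted big cell onto the untwisted big cell, and density transfers. (ii) Alternatively, the complement of the big cell is the zero set of a holomorphic function on the loop group (a Fredholm/Toeplitz determinant, the $\tau$-function of Pressley--Segal); its restriction to the connected complex Banach Lie group $\LSL$ is holomorphic and not identically zero (it is nonzero at the identity), so by the identity theorem its zero set has empty interior and the big cell is dense. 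With either supplement your proof closes; without one, the density claim is unsupported.
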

 It is easy to check that the extended frames $F$ are elements in $\LSL$, 
 since $U^{\lambda}$ and $V^{\lambda}$ satisfy the twisted condition.
 Applying the Birkhoff decomposition of Theorem \ref{thm:Birkhoff} to 
 the extended frame $F$, we obtain a pair of meromorphic 
 and anti-meromorphic $1$-forms, that is, the pair of normalized potentials.

\begin{Theorem}[Pairs of normalized potentials]\label{thm:Normalized}
 Let $F$ be the extended frame of some minimal 
 immersion in $\Nil$ on some simply connected domain $\D \subset \C$ 
 and decompose $F$ as $F = F_{-} V_{+}  = F_{+} V_{-}$ 
 according to Theorem \ref{thm:Birkhoff}. 
 Then $F_{-}$ and $F_{+}$ are meromorphic and 
 anti-meromorphic respectively. 
 Moreover, the Maurer-Cartan forms 
 $\xi_{\pm}$ of $F_{\pm}$ are 
 given explicitly as follows:
\begin{equation}\label{eq:pair}
 \left\{
 \begin{array}{l}
 \xi_{-}(z, \lambda) = F_{-}^{-1}(z, \lambda) d F_{-}(z, \lambda) 
      = \lambda^{-1} \begin{pmatrix} 0 & -p \\ B p^{-1} & 0 \end{pmatrix} dz, \\
 \xi_{+}(z, \lambda) = F_{+}^{-1}(z, \lambda) d F_{+}(z, \lambda) =
 -\sigma_3 \overline{\xi^c_{-}(z, 1/\bar \lambda)}^t \sigma_3 d\bar z,
 \end{array}
\right.
\end{equation}
 where $p$ is a meromorphic function on $\D$,  
 $\xi^c_{-}(z, \lambda)$ denotes the coefficient matrix of $\xi_{-}(z, \lambda)$ 
 and $B$ is the holomorphic function on $\D$ 
 defined in \eqref{eq:ARdiff}, 
 which is the coefficient of the Abresch-Rosenberg differential.
\end{Theorem}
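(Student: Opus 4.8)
The plan is to run the normalized-potential construction of \cite{DPW}, reading off the $\lambda$-support of each factor from the explicit form \eqref{eq:U-VCMC} of the Maurer--Cartan form. Recall that the extended frame obeys $F^{-1}dF=\alpha^{\lambda}=U^{\lambda}dz+V^{\lambda}d\bar z$, where in the minimal (hence constant mean curvature) normalization $U^{\lambda}$ carries only the powers $\lambda^{0}$ (diagonal) and $\lambda^{-1}$ (off-diagonal), while $V^{\lambda}$ carries only $\lambda^{0}$ (diagonal) and $\lambda^{1}$ (off-diagonal). Writing $F=F_{-}V_{+}$ from Theorem \ref{thm:Birkhoff} and setting $\xi_{-}=F_{-}^{-1}dF_{-}$, I would substitute the factorization into $F^{-1}dF=\alpha^{\lambda}$ to obtain
\[
\xi_{-}=V_{+}\,\alpha^{\lambda}\,V_{+}^{-1}-(dV_{+})V_{+}^{-1},
\]
and then compare Fourier coefficients in $\lambda$ on the two sides.

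First I would fix the $\lambda$-support of $\xi_{-}$. Since $F_{-}\in\LSLMI$ is holomorphic on $D^{-}$ with $F_{-}(\infty)=I$, the twisting $g(-\lambda)=\sigma_{3}g(\lambda)\sigma_{3}$ gives $F_{-}=I+\lambda^{-1}(\cdots)+\cdots$, so $\xi_{-}$ contains only powers $\lambda^{\le -1}$. In the $d\bar z$-component of the displayed identity $V_{+}\in\LSLP$ and $V^{\lambda}$ contribute only powers $\lambda^{\ge 0}$; matched against $\lambda^{\le -1}$ this forces the $d\bar z$-part of $\xi_{-}$ to vanish, whence $\partial_{\bar z}F_{-}=0$ and $F_{-}$ is $z$-holomorphic (meromorphic once poles of the factorization are admitted). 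In the $dz$-component $U^{\lambda}$ has lowest power $\lambda^{-1}$, so this part has powers $\lambda^{\ge -1}$; together with $\lambda^{\le -1}$ only $\lambda^{-1}$ survives, with coefficient $V_{+}^{(0)}U_{-1}(V_{+}^{(0)})^{-1}$, where $V_{+}^{(0)}=V_{+}(z,0)$ and $U_{-1}=\begin{pmatrix}0&-e^{w/2}\\ Be^{-w/2}&0\end{pmatrix}$. Evaluating the twisting at $\lambda=0$ yields $V_{+}^{(0)}=\sigma_{3}V_{+}^{(0)}\sigma_{3}$, so $V_{+}^{(0)}=\di(a,a^{-1})$ is diagonal; conjugating the off-diagonal $U_{-1}$ by it produces $\lambda^{-1}\begin{pmatrix}0&-p\\ Bp^{-1}&0\end{pmatrix}$ with $p=a^{2}e^{w/2}$. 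This is the asserted form of $\xi_{-}$, and $p$ is meromorphic because $F_{-}$ (hence $\xi_{-}$) is.

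For $\xi_{+}$ I would invoke the reality built into the minimal case. By Lemma \ref{thm:min-sym} the frame is $\ISU$-valued, i.e. $\tau(F)=F$ for the anti-holomorphic involution $\tau(g)(\lambda)=\sigma_{3}\overline{g(1/\bar\lambda)}^{\,t-1}\sigma_{3}$ of \eqref{eq:SU-loop}; equivalently $\alpha^{\lambda}$ satisfies the symmetry \eqref{eq:U-Vsymm}. Since $\tau$ is an anti-homomorphism interchanging $\LSLMI\leftrightarrow\LSLPI$ and $\LSLP\leftrightarrow\LSLM$, it should carry the factorization $F=F_{-}V_{+}$ into one of the type $F=F_{+}V_{-}$ of Theorem \ref{thm:Birkhoff}; matching the two decompositions by uniqueness then identifies $\xi_{+}$ as the $\tau$-image of $\xi_{-}$. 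Transporting $\xi_{-}^{c}$ through $\tau$ — the same computation that turns \eqref{eq:U-V} into \eqref{eq:U-Vsymm} — gives exactly $\xi_{+}=-\sigma_{3}\overline{\xi_{-}^{c}(z,1/\bar\lambda)}^{t}\sigma_{3}\,d\bar z$, so $F_{+}$ is anti-meromorphic and \eqref{eq:pair} holds with the same $p$ and the same holomorphic $B$. As a consistency check one may run the mirror power-count on $F=F_{+}V_{-}$: there $\xi_{+}$ has only powers $\lambda^{\ge 1}$, its $dz$-part vanishes, and its $d\bar z$-part is $\lambda^{1}$ times a diagonal conjugate of $\begin{pmatrix}0&-\bar Be^{-w/2}\\ e^{w/2}&0\end{pmatrix}$, which agrees with the above once the diagonal normalizations are tied by $\bar p=-b^{-2}e^{w/2}$ (using that $e^{w/2}$ is purely imaginary).

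The holomorphicity of $B$ is already supplied by Theorem \ref{thm:CMCcharact}, so the remaining difficulties are bookkeeping. The Birkhoff decomposition holds only on the open dense big cell of Theorem \ref{thm:Birkhoff}, so $F_{\pm}$ are a priori defined off a discrete set; I expect the main obstacle to be showing that $\xi_{\pm}$ extend meromorphically across it, with $p$ and $Bp^{-1}$ acquiring at worst poles, and that the $\tau$-matching of the two decompositions really produces the identical meromorphic function $p$ rather than two unrelated ones — precisely the point at which the relation $\bar p=-b^{-2}e^{w/2}$ must be verified directly from $\tau(F)=F$.
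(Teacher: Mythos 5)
Your proposal is correct and follows essentially the same route as the paper: factor $F$ via Birkhoff, write $\xi_{-}=\ad(V_{+})\alpha^{\lambda}-dV_{+}V_{+}^{-1}$, use the $\lambda$-power count (support in degrees $\le -1$ versus $\ge -1$) to kill the $d\bar z$-part and isolate the $\lambda^{-1}$ coefficient conjugated by the diagonal constant term of $V_{+}$, and then obtain the relation $F_{+}(\lambda)=\sigma_{3}\overline{F_{-}(1/\bar\lambda)}^{t-1}\sigma_{3}$ from $\tau(F)=F$ plus uniqueness of the Birkhoff decomposition, exactly as in the paper. The only point you defer — meromorphic extension of $F_{\pm}$ across the complement of the big cell — is precisely what the paper settles by citing \cite[Lemma 2.6]{DPW}, so no substantive gap remains.
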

 \begin{proof}
 From the equality $F = F_{-} V_+$, 
 the Maurer-Cartan form of $F_{-}$ can be computed as 
 $$
 \xi_{-} = F_{-}^{-1} d F_{-} =  V_{+} F^{-1} (d F V_+^{-1}
 - F V_{+}^{-1} dV_{+} V_{+}^{-1}) = \ad (V_+) \alpha^{\lambda} 
 - dV_{+} V_{+}^{-1}.
 $$
 Since the coefficient matrix of $\xi_{-}$ 
 is an element in the Lie algebra of $\LSLMI$ and 
 the lowest degree of entries of 
 the right hand side with respect to $\lambda$ 
 is equal to $-1$, the $1$-form
 $\xi_{-}$ can be computed as 
 $$
 \xi_- = \lambda^{-1}
 \begin{pmatrix} 0 &  - e^{w/2} v_+^{2} \\ B e^{-w/2}v_+^{-2} & 0 \end{pmatrix} dz,
 $$
 where $\di(v_+, v_+^{-1})$ is the constant coefficient of the Fourier 
 expansion of $V_+$ with respect to $\lambda$. Moreover 
 from \cite[Lemma 2.6]{DPW}, it is known that $F_-$ is 
 meromorphic on $\D$, and thus $\xi_{-}$ is meromorphic on $\D$.
 Setting $p = e^{u/2} v_+^2$, we obtain the form $\xi_-$ in \eqref{eq:pair}.
 Similarly, by the equality $F= F_+ V_-$, 
 the $1$-form $\xi_+$ can be computed as 
 $$
 \xi_{+} = F_{+}^{-1} d F_{+} =  V_{-} F^{-1} (d F V_-^{-1}
 - F V_{-}^{-1} dV_{-} V_{-}^{-1}) = \ad (V_-) \alpha^{\lambda} 
 - dV_{-} V_{-}^{-1}.
 $$
 Since the coefficient matrix of $\xi_+$ is an element 
 in the Lie algebra of $\LSLP$ and 
 the highest degree of entries of 
 the right hand side with respect to $\lambda$ 
 is equal to $1$, 
 the $1$-form $\xi_+$ can be computed as 
 $$
 \xi_+ = \lambda
 \begin{pmatrix} 0 &  - \bar B e^{-w/2} v_-^{2} \\  e^{w/2}v_-^{-2} & 0\end{pmatrix} 
 d\bar z,
 $$
 where $\di(v_-, v_-^{-1})$ is the constant coefficient of the Fourier 
 expansion of $V_-$ with respect to $\lambda$. Similar to the case of 
 $\xi_-$, from \cite[Lemma 2.6]{DPW} it is known that $F_+$ is 
 anti-meromorphic on $\D$, and thus $\xi_{+}$ is anti-meromorphic on $\D$.
 Since $F$ has the symmetry $F(\lambda)
 = \sigma_3 \overline{F(1/ \bar \lambda)}^{t -1} \sigma_3$, 
 $$
 F(\lambda) = \sigma_3 \overline{F_{-}(1/ \bar \lambda)}^{t -1} \sigma_3
 \sigma_3 \overline{V_+(1/ \bar \lambda)}^{t -1} \sigma_3
 $$
 is the second case in the Birkhoff decomposition 
 of Theorem \ref{thm:Birkhoff}. Since the Birkhoff decomposition 
 is unique, $F_+$ can be computed as 
 $$
 F_+(\lambda) = \sigma_3 \overline{F_-(1/\bar \lambda)}^{t -1} \sigma_3.
 $$
 Therefore, $\xi_+ = F_+^{-1} dF_+$ has the symmetry as stated 
 in \eqref{eq:pair}.
 \end{proof}
 \begin{Definition}
  The pair of meromorphic and anti-meromorphic $1$-forms $\xi_{\pm}$ 
  defined in \eqref{eq:pair} is called the 
  {\it pair of normalized potentials}.
 \end{Definition}

\section{Generalized Weierstrass type representation for minimal surfaces in  $\Nil$}\label{sc:Weierstrass}
 In the previous section, a pair of normalized 
 potentials was obtained from a minimal surface in $\Nil$. 
 In this section, we will conversely show the generalized Weierstrass type 
 representation formula for minimal
 surfaces in $\Nil$ from pairs of normalized potentials.

 {\bf Step I.} Let $(\xi_{-}, \xi_+)$ be a pair of normalized potentials 
 defined in \eqref{eq:pair}. Solve the pair of ordinary differential equations:
\begin{equation}\label{eq:solCpm}
 d C_{\pm} = C_{\pm} \xi_{\pm}, 
\end{equation}
 where $C_{+}(\bar z_*, \lambda) = 
 \sigma_3 \overline{C_{-}(z_*, 1/\bar \lambda)}^{t-1}
 \sigma_3$ and the initial condition $C_{-}(z_*, \lambda)$ is 
 chosen such that 
 $C_{-}^{-1}(z_*, \lambda) C_{+}(\bar z_*, \lambda)$ is 
 Birkhoff decomposable in both ways of Theorem \ref{thm:Birkhoff}.

 {\bf Step I\!I.} Applying the Birkhoff decomposition of 
 Theorem \ref{thm:Birkhoff}
 to the element $C_{-}^{-1} C_{+}$, we obtain 
 for almost all $z$
 \begin{equation}\label{eq:FBirkhoff}
 C_{-}^{-1} C_{+} = V_{+} V_{-}^{-1}, 
 \end{equation}
 where $V_{+} \in \LSLPI$ and $V_{-} \in  \LSLM$.
\begin{Remark}
\mbox{}
\begin{enumerate}
\item  If we change the initial condition of $C_{-}$ from 
 $C_{-}(z_*, \lambda)$ to $U(\lambda)C_{-}(z_*, \lambda)$ 
 by an element $U(\lambda)$ in $\LISU$ which is independent of $z$, 
 then the Birkhoff decomposition for 
 $\tilde C_{-}^{-1} \tilde C_+$ with $\tilde C_{-}(z, \lambda) = U(\lambda) C_-(z, \lambda)$ 
 and  $\tilde C_{+}(z, \lambda) = \sigma_3 \overline{U(1/\bar \lambda)}^{t -1} \sigma_3 C_+(z, \lambda)$ 
 is the same as $C_{-}^{-1} C_{+}$, that is,
 $$
 \tilde C_{-}^{-1} \tilde C_+ =C_{-}^{-1} C_{+} = V_{+} V_{-}^{-1},
 $$
 since $U(\lambda)$ is in $\LISU$.

\item The Birkhoff decomposition \eqref{eq:FBirkhoff} permits to define 
 $\hat F = C_{-}V_{+}= C_{+} V_{-}$.
 The expressions $C_{-} = \hat F V_{+}^{-1}$ 
 and $C_{+} = \hat F V_{-}^{-1}$
 look like Iwasawa decompositions.
 However, for this we need $\hat F$ to be $\ISU$-valued.
 That one can replace $\hat F$ in some open subset 
 $\mathbb D_0 \subset \mathbb D$ by some $F = \hat F k$, 
 $k$ diagonal, real and independent of $\lambda$ such that $F \in \LISU$, 
 will be shown below. Note however, that such a decomposition can be obtained
 in general only for $z \in \mathbb D_0$, since there are two open Iwasawa cells 
 and if $C_{-}(z, \lambda)$ moves into the second open Iwasawa cell, 
 then $C_{-}(z, \lambda) = \hat F(z, \lambda) \omega_0(\lambda) V_+^{-1}(z, \lambda)$ 
 for some $\omega_0(\lambda)$, see \cite{BRS:Min}.
\end{enumerate}
\end{Remark}
\begin{Theorem}\label{thm:extendedframe}
 Let $F= C_{+} V_{-} = C_{-} V_{+}$ be the loop defined by 
 the Birkhoff decomposition in \eqref{eq:FBirkhoff}. 
 Then $V_{-}|_{\lambda =\infty}$ is a $\lambda$-independent diagonal 
 $\SL$ matrix with real entries. If its real diagonal entries are positive, 
 then there exists a $\lambda$-independent 
 diagonal $\SL$ matrix $D$ such that $F D \in \LISU$ 
 is the extended frame of some minimal surface in $\Nil$ around the base point $z_*$.
 If the real diagonal entries of $V_{-}|_{\lambda =\infty}$ 
 are negative, then there exists a $\lambda$-independent 
 diagonal $\SL$ matrix $D$ and $\omega_0 = 
(\begin{smallmatrix}0 & \lambda \\ -\lambda^{-1}& 0 \end{smallmatrix})$ 
 such that $C_{+} = F V_{-}^{-1} = (F D\omega_0) \omega_0 
 (DV_{-}^{-1})$, where $F D\omega_0 \in \LISU, DV_{-}^{-1} 
 \in \LSLM$ and $\omega_0 F D$ 
 is the extended frame of some minimal surface in $\Nil$ around the base point $z_*$.
\end{Theorem}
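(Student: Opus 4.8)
The plan is to use the reality involution $g^{\sharp}(\lambda):=\sigma_3\,\overline{g(1/\bar\lambda)}^{\,t-1}\sigma_3$, whose fixed-point set is exactly $\LISU$, and to track how it permutes the Birkhoff factors. First I would check that $C_{+}=C_{-}^{\sharp}$ holds identically on $\D$: differentiating $z\mapsto C_{-}(z,\cdot)^{\sharp}$ and using the symmetry $\xi_{+}=-\sigma_3\overline{\xi^{c}_{-}(z,1/\bar\lambda)}^{\,t}\sigma_3\,d\bar z$ from \eqref{eq:pair}, one sees that $C_{-}^{\sharp}$ solves the second equation in \eqref{eq:solCpm}; since it agrees with $C_{+}$ at $z_{*}$ by the prescribed initial condition, uniqueness of solutions gives $C_{+}=C_{-}^{\sharp}$ (and hence $C_{-}=C_{+}^{\sharp}$).

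Next I would apply $\sharp$, which is a group homomorphism, to $F=C_{-}V_{+}=C_{+}V_{-}$. Since $V_{+}\in\LSLPI$ we get $V_{+}^{\sharp}\in\LSLMI$, and since $V_{-}\in\LSLM$ we get $V_{-}^{\sharp}\in\LSLP$. Writing $F^{\sharp}=C_{+}V_{+}^{\sharp}=C_{-}V_{-}^{\sharp}$ and pairing with $F=C_{+}V_{-}$, respectively $F=C_{-}V_{+}$, yields $F^{-1}F^{\sharp}=V_{-}^{-1}V_{+}^{\sharp}\in\LSLM$ and simultaneously $F^{-1}F^{\sharp}=V_{+}^{-1}V_{-}^{\sharp}\in\LSLP$. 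As $\LSLP\cap\LSLM$ consists of constant (i.e. $\lambda$-independent) loops, $F^{-1}F^{\sharp}=:D_{0}$ is constant. Evaluating $V_{-}^{-1}V_{+}^{\sharp}$ at $\lambda=\infty$ (where $V_{+}^{\sharp}=I$) gives $D_{0}=V_{-}(\infty)^{-1}$, while evaluating $V_{+}^{-1}V_{-}^{\sharp}$ at $\lambda=0$ gives $D_{0}=\sigma_3\overline{V_{-}(\infty)}^{\,t-1}\sigma_3$. The twisting condition forces $V_{-}(\infty)=\di(d,d^{-1})$ to be diagonal, and comparing the two formulas for $D_{0}$ yields $d=\bar d$. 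This proves that $V_{-}|_{\lambda=\infty}$ is a real diagonal $\SL$ matrix and that $D_{0}=\di(d^{-1},d)$ with $d\in\R$.

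It then remains to correct $F$ into an element of $\LISU$. Seeking a constant real diagonal $D=\di(\delta,\delta^{-1})$ with $(FD)^{\sharp}=FD$ amounts to solving $D_{0}=D(D^{\sharp})^{-1}=\di(|\delta|^{2},|\delta|^{-2})$, which is possible exactly when $d>0$; taking $\delta=d^{-1/2}$ gives $FD\in\LISU$. Because $D$ is constant and diagonal, $(FD)^{-1}d(FD)=D^{-1}(F^{-1}dF)D$ retains the degree structure $\lambda^{-1},\lambda^{0},\lambda$ of \eqref{eq:harmonicity} (this is the standard DPW degree argument, using that $\xi_{-}$ has a simple pole at $\lambda=0$ and that $F\in\LISU$ is real), so $FD$ is the extended frame of a minimal surface, which is recovered through Theorem \ref{thm:mincharact} and the Sym formula of Theorem \ref{thm:Sym}.

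The remaining, and hardest, point is the case $d<0$, where $D(D^{\sharp})^{-1}$ is always positive and the above correction fails: this signals that $C_{+}$ lies in the \emph{second} open Iwasawa cell, and is the crux of the statement. Here I would introduce $\omega_{0}=\left(\begin{smallmatrix}0 & \lambda\\ -\lambda^{-1} & 0\end{smallmatrix}\right)$, which respects the twisting and satisfies $\omega_{0}^{2}=-I$ and $\omega_{0}^{\sharp}=\omega_{0}^{-1}$. Repeating the fixed-point computation for $\omega_{0}FD$ replaces $D(D^{\sharp})^{-1}$ by $-D(D^{\sharp})^{-1}=\di(-|\delta|^{2},-|\delta|^{-2})$, so that $D_{0}=\di(d^{-1},d)$ is now solvable precisely because $d<0$ (take $|\delta|^{2}=-d^{-1}$); hence $\omega_{0}FD\in\LISU$ and, by the same degree argument, it is the extended frame of a minimal surface. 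Finally, moving $\omega_{0}$ and $D$ across in $C_{+}=FV_{-}^{-1}$ exhibits $C_{+}$ through the second Iwasawa cell, with the $\ISU$-factor $FD\omega_{0}$ and a remaining factor in $\LSLM$; this factorization is verified by a direct computation using $\omega_{0}^{2}=-I$, following the two-cell analysis of \cite{BRS:Min}. I expect the main obstacle to be exactly this dichotomy of Iwasawa cells, together with the verification that the corrected loop is not merely $\ISU$-valued but genuinely an extended frame in the normalized form demanded by Theorem \ref{thm:mincharact}.
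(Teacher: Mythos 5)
Your strategy coincides with the paper's own proof: both rest on the symmetry $C_{+}=\sigma_3\overline{C_{-}(\cdot,1/\bar\lambda)}^{t-1}\sigma_3$ from \textbf{Step I}, deduce from uniqueness of the Birkhoff decomposition that $V_{-}|_{\lambda=\infty}=\di(d,d^{-1})$ is real, and then correct $F$ by a diagonal gauge $D$, with the $\omega_0$-twist when $d<0$. Your packaging of the reality statement through the involution $g\mapsto g^{\sharp}$ and the observation that $F^{-1}F^{\sharp}\in\LSLM\cap\LSLP$ is $\lambda$-independent, evaluated at $\lambda=\infty$ and $\lambda=0$, is an equivalent (and arguably cleaner) form of the paper's argument, which instead applies uniqueness of the Birkhoff splitting to \eqref{eq:V+-} to obtain \eqref{eq:diagonal} with $K=V_{-}|_{\lambda=\infty}$; your fixed-point computations for $FD$ and $\omega_0FD$, together with $\omega_0^{\sharp}=\omega_0^{-1}$, reproduce exactly the paper's gauges $D=\di(|v_-|^{-1/2},|v_-|^{1/2})$ and $D\omega_0$, so the reality statement and the two-cell dichotomy are handled correctly.

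There is, however, a genuine gap at precisely the step you yourself flag as remaining: concluding that the $\LISU$-valued loop is an \emph{extended frame}, i.e.\ that its Maurer--Cartan form is literally of the form \eqref{eq:alpha} with \eqref{eq:U-VCMC}. Two concrete problems. First, $D_0=V_{-}(\infty)^{-1}$ depends on $z$, so your $D$ is $\lambda$-independent but not constant; hence $(FD)^{-1}d(FD)=D^{-1}(F^{-1}dF)D+D^{-1}dD$, and the identity you invoke is false as stated. The extra term is diagonal and $\lambda$-free, so the degree structure survives, but it is exactly what produces the diagonal term $\tilde\alpha_0$ that has to match the entries $\pm\tfrac14 w_z$, $\mp\tfrac14 w_{\bar z}$ of \eqref{eq:U-VCMC}; this cannot be dismissed by citing the degree argument. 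Second, and more substantially, the degree argument only yields a $\lambda^{-1}$-coefficient $\left(\begin{smallmatrix}0 & -p\\ Bp^{-1}&0\end{smallmatrix}\right)dz$ coming from $\xi_-$, whose upper-right entry after gauging, $-p\,v_-$, is a general meromorphic function; the normalized form \eqref{eq:U-VCMC} requires the $dz$- and $d\bar z$-off-diagonal coefficients to be built from one and the same function $e^{w/2}$, and this is achieved in the paper only after the change of coordinates $w=\int_{z_*}^{z}p(t)\,dt$ followed by the explicit computation \eqref{eq:IwasawaFlambda}--\eqref{eq:finalMC}. Your proposal never mentions this coordinate change, and without it (or an equivalent normalization) the loop is $\LISU$-valued and of DPW type --- so its projection is a harmonic map into $\mathbb H^2$ --- but it cannot yet be fed into the Sym formula of Theorem \ref{thm:Sym}, whose proof uses the explicit entries of \eqref{eq:U-VCMC}. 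In short: everything up to and including membership in $\LISU$ is correct and matches the paper; the identification as an extended frame, which is the actual conclusion of the theorem, is left open in your proposal and is supplied in the paper by the coordinate change and the explicit Maurer--Cartan computation.
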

\begin{proof}
 From {\bf Step I}, the solution $C_{+}(\bar z, \lambda)$ in \eqref{eq:solCpm} 
 satisfies the symmetry 
 $$
 C_{+} (\bar z, \lambda) =  \sigma_3 \overline{C_-( z, 1/\bar \lambda)}^{t -1} \sigma_3.
 $$
 Therefore 
 \begin{align}\label{eq:V+-}
 V_+ (z,\bar z, \lambda)V_-(z,\bar z, \lambda)^{-1} 
 &= C_{-}(z, \lambda)^{-1} C_{+}(\bar z, \lambda) 
 = \sigma_3 \overline{C_+(\bar z, 1/ \bar \lambda)^{-1} 
 C_-(\bar z, 1/ \bar \lambda)}^{t-1} \sigma_3  \\
 \nonumber
&=\sigma_3 \overline{V_{-}(z,\bar z, 1/ \bar \lambda)}^{t-1}\sigma_3 
  \sigma_3 \overline{V_+ (z,\bar z, 1/ \bar\lambda)}^{t} \sigma_3.
\end{align}
 Thus by the uniqueness of the Birkhoff decomposition of Theorem \ref{thm:Birkhoff},
 we have 
\begin{align}\label{eq:diagonal}
 V_+ (z,\bar z, \lambda) =  \sigma_3 \overline{V_{-}(z,\bar z, 1/ \bar \lambda)}^{t-1}\sigma_3 
 K(z, \bar z),
\end{align}
 where $K$ is some $\lambda$-independent $\SL$ diagonal matrix.
 Let $V_{-}|_{\lambda=\infty} = \di (v_-, v_-^{-1})$ denote the $\lambda$-independent 
 constant coefficient of the Fourier 
 expansion of $V_-$ with respect to $\lambda$. 
 Then by \eqref{eq:V+-} and \eqref{eq:diagonal}, $v_{-}$ takes values in $\mathbb R$ 
 and $K= V_-|_{\lambda=\infty}$.
 Since at $z_* \in \mathbb C$, the loop $C^{-1}_{-} C_{+}$ is Birkhoff decomposable, 
 $v_{-}>0$ or $v_{-}<0$ on some open subset $\mathbb D \subset \mathbb C$ containing 
 $z_*$.
 Let us consider first the case of $v_{-}>0$.
 The Maurer-Cartan form of $F$ can be computed as 
\begin{equation*}
 F^{-1} d F = \ad (V_+^{-1}) \xi_- + V_+^{-1} d V_+ 
                                = \ad (V_-^{-1}) \xi_+ + V_-^{-1} d V_-.  
\end{equation*}
 Since the lowest degree of entries of the middle term is $\lambda^{-1}$
 and the highest degree of entries of the right term is $\lambda$, 
 we obtain
 $$
 F^{-1} d F = 
 \lambda^{-1} \begin{pmatrix} 0  & -p \\ B p^{-1} & 0\end{pmatrix} dz 
 + \alpha_0 +
 \lambda \begin{pmatrix} 0 & \bar B \bar p^{-1} v_-^{-2} \\ - \bar p v_-^2 & 0  \end{pmatrix} 
 d\bar z,
 $$
 where $\alpha_0$ consists of the $dz$-part only and 
 is computed from $V_-^{-1} dV_-$ as 
 $$
 \alpha_0 = 
  \begin{pmatrix}
  v_-^{-1} (v_{-})_z dz  & 0 \\
  0 & - v_-^{-1} (v_{-})_z dz 
  \end{pmatrix}. 
 $$
 Let us consider the change of coordinates $w = \int_{z_*}^z p(t) dt$, 
 that is, $ d w  = p(z) d z$ and $d \bar w  = \overline{p(z)} d \bar z$.
 Then 
 \begin{equation}\label{eq:IwasawaFlambda}
 F^{-1} d F = 
 \lambda^{-1} \begin{pmatrix} 0  & - 1 \\ B p^{-2} & 0\end{pmatrix} dw 
 + \alpha_0 +
 \lambda \begin{pmatrix} 0 & \bar B  {\bar p}^{-2} v_-^{-2} 
 \\ - v_-^2 & 0  \end{pmatrix} 
 d\bar w,
 \end{equation}
 and $\alpha_0$ is unchanged, since $v_-^{-1} (v_{-})_z dz= v_-^{-1} (v_{-})_w dw$.
 Finally choosing the gauge $D = \di (v_-^{-1/2}, v_-^{1/2})$, we have
 \begin{equation}\label{eq:finalMC}
 (FD)^{-1} d(F D) = 
 \lambda^{-1} \begin{pmatrix} 0  & - v_- \\ \tilde B v_-^{-1} & 0\end{pmatrix} 
 dw 
 + \tilde \alpha_0 +
 \lambda \begin{pmatrix} 0 & \bar{\tilde B} v_-^{-1} \\ - v_- & 0  \end{pmatrix} 
 d\bar{w},
 \end{equation}
 with $\tilde B = B p^{-2}$ and 
 $$
 \tilde \alpha_0 = 
 \begin{pmatrix}
 \frac{1}{2} (\log v_-)_{w} dw
 -\frac{1}{2} (\log v_-)_{\bar w} d \bar w
  & 0  \\ 0  &
 - \frac{1}{2} (\log v_-)_{w} d w
 + \frac{1}{2} (\log v_-)_{\bar w} d \bar w 
 \end{pmatrix}. 
 $$
 Thus the Maurer-Cartan form \eqref{eq:finalMC} has the form 
 stated in \eqref{eq:alpha}. Moreover, using \eqref{eq:diagonal}
 and $D^{-2} =K$, we have  
 $$
 V_+ (z,\bar z, \lambda)D(z, \bar z) =  \sigma_3 \overline{V_{-}(z,\bar z, 1/ \bar \lambda) 
 D(z, \bar z)}^{t-1}\sigma_3.
 $$
 Therefore, $F D = C_- V_+ D = C_+ V_{-} D$ takes values in $\LISU$ 
 and is the extended frame for some minimal surface in $\Nil$.
 We now consider the case of $v_{-}<0$. Then similar to the case of $v_{-}>0$, 
 the Maurer-Cartan equation of $F$ is the same as in \eqref{eq:IwasawaFlambda}. Let $\omega_0 =
 (\begin{smallmatrix} 0 & \lambda \\  -\lambda^{-1} & 0 
 \end{smallmatrix})$.
 Then choosing the gauge $D \omega_0
 =\left( \begin{smallmatrix} 0 & \lambda |v_{-}|^{-1/2} \\
  -\lambda^{-1}|v_{-}|^{1/2} & 0 \end{smallmatrix}\right)$
 we have 
$$
 V_+ (z,\bar z, \lambda) D(z, \bar z) \omega_0(\lambda) =  \sigma_3 \overline{V_{-}(z,\bar z, 1/ \bar \lambda) 
 D(z, \bar z) \omega_0(1/\bar \lambda)}^{t-1}\sigma_3.
$$
 Therefore, $F D \omega_0 = C_- V_+ D \omega_0= C_+ V_{-} D \omega_0$ 
 takes values in $\LISU$. 
 Moreover $\omega_0  FD = \omega_0^{-1} (F D \omega_0) \omega_0$
 also takes values in $\LISU$ and its Maurer-Cartan form has 
 the form stated in \eqref{eq:alpha}.  Thus $\omega_0  FD$
 is the extended frame of some minimal surface in $\Nil$.
\end{proof}
 {\bf Step I\!I\!I.} 
 In this final step, minimal surfaces in $\Nil$ can be obtained by 
 the Sym formula (see Theorem \ref{thm:Sym}) for the extended frame
 $F$:
 Let $m =-i \lambda (\partial_{\lambda} F) F^{-1} 
 - \tfrac{i}{2} F \sigma_3 F^{-1}$ and 
 $\hat f^{\lambda}$ as  
\begin{equation*}
 \hat f^{\lambda} = 
    \left.
    \left(m^o -\frac{i}{2} \lambda (\partial_{\lambda} m)^d\right)
    \;\right|_{\lambda \in \mathbb{S}^1}, 
\end{equation*}
 as in Theorem \ref{thm:Sym}. Then via the identification in \eqref{eq:Nilidenti}, 
 for each $\lambda$, the map 
 $f^{\lambda} = \Xi_{\rm nil} \circ \hat f^{\lambda}$ gives a minimal surface in $\Nil$.

\begin{Remark}
\mbox{}
 The normalized potential $\xi_{-}$ in \eqref{eq:pair}
 generating the harmonic map associated with a minimal surface
 can be explicitly computed from the geometric data, by the 
 so-called {\it Wu's formula} as follows:
\begin{equation}\label{eq:Wu}
 \xi_{-} = \lambda^{-1} 
\begin{pmatrix}
 0 & - e^{\hat w (z) - \hat w(0)/2} \\
 B(z) e^{-\hat w(z)+ \hat w(0)/2 } & 0
\end{pmatrix}
 dz, 
\end{equation}
 where $4 Bdz^2$ is the Abresch-Rosenberg differential and 
 $e^{\hat w(z)}$ is the holomorphic extension of
 $e^{w (z, \bar z)} = - h^2(z, \bar z)/16$ around the base point 
 $z=0$ with the support function $h$. 
 The proof of this formula is analogous to the original 
 proof of Wu's formula for constant mean 
 curvature surfaces in $\mathbb E^3$, see \cite{Wu}.
\end{Remark}

%%%%%%
\section{Examples}\label{sc:examples}
 We exhibit some examples of minimal surfaces.
 In our general frame work, if one change the initial 
 condition of $C_-$ from $C_-(z_*, \lambda)$ to 
 $U(\lambda) C_-(z_*, \lambda)$ for some $U(\lambda) \in \LISU$, 
 then the corresponding harmonic maps into $\mathbb H^2$ are 
 isometric. However, the associated minimal surfaces can 
 differ substantially, since isometries of $\mathbb H^2$ 
 do not correspond in general to isometries of $\Nil$.
 Since $\ISU$ is a three-dimensional Lie group, 
 the initial conditions $U(\lambda) \in \LISU$ for each $\lambda \in 
 \mathbb S^1$
 could yield three-dimensional families of non-isometric 
 minimal surfaces.
 However, choosing a $\ISU$-diagonal matrix, which is an isometry of 
 $\Nil$ by rotation, the initial conditions in general give only 
 two-dimensional families of non-isometric minimal surfaces.
\subsection{Horizontal umbrellas}\label{sbsc:plane}
 Let $\xi_{-}$ be the normalized potential defined as
$$ 
\xi_{-} = -\lambda^{-1} 
\begin{pmatrix}
0 & i \\
0 & 0
\end{pmatrix} dz.
 $$
 It is easy to compute the solution to $d C_{-} = C_{-} \xi_{-}$ with 
 $C_{-}(z=0, \lambda) = \di (\sqrt{i}^{-1}, \sqrt{i})$.  It is given by 
 $$
 C_{-} = 
 \begin{pmatrix}
 \sqrt{i}^{-1} & -i \sqrt{i}^{-1} \lambda^{-1} z\\ 
 0 & \sqrt{i}
 \end{pmatrix}.
 $$
 Then the loop group decomposition $C_{-} = F V_{+}$ with $F \in \LISU$ and $V_+ \in \LSLP$ 
 can be computed explicitly:
$$
 F =
 \frac{1}{\sqrt{1-|z|^2}}
 \begin{pmatrix}
 \sqrt{i}^{-1} & - i\sqrt{i}^{-1}\lambda^{-1}z\\
 i\sqrt{i}\lambda{\bar z} & \sqrt{i}
 \end{pmatrix}\;\;\mbox{and}\;\;
 V_{+} = \frac{1}{\sqrt{1-|z|^2}}
\begin{pmatrix}
 1& 0\\
 - i\lambda \bar z
 &1-|z|^2
\end{pmatrix}
 $$
 Hence
 $$
 \hat f^{\lambda} =  \frac{2}{1-|z|^2}
 \begin{pmatrix}
 0 & -i \lambda^{-1} z\\ 
 i \lambda \bar z & 0
 \end{pmatrix}.
 $$
 By the identification \eqref{eq:Nilidenti}, we obtain 
 $$
 f^{\lambda} =- \frac{2}{1-|z|^2}  (\lambda^{-1} z+ \lambda \bar z, 
 \; i (\lambda \bar z- \lambda^{-1} z), \; 0).
 $$
 In this case the associated family consists of different 
 parametrizations of the same \textit{horizontal plane}.
 It is easy to see that the Abresch-Rosenberg differential
 of a horizontal plane is zero. 

 Taking a different $\LISU$-initial condition for the above $C_{-}$,
 the resulting surfaces are non-vertical planes:
 Let $\mathcal{F}(x_1,x_2)=ax_{1}+bx_{2}+c$ be a linear function on 
 the $x_1x_2$-plane. Then the graph of $\mathcal{F}$ 
 is a minimal surface in $\mathrm{Nil}_3$ with negative 
 Gaussian curvature
$$
 K=-\frac{3+2(b-x_1/2)^2+2(a+x_2/2)^2}{4\{1+2(b-x_1/2)^2+(a+x_2/2)^2\}^2}
 <0.
$$ 
 Then the graph of $\mathcal{F}$ is called 
 a \textit{horizontal umbrella}. 
% One can see that every horizontal umbrella is congruent to 
% the horizontal plane. 

\subsection{Hyperbolic paraboloids}\label{sbsc:parboloid}
 Let $\xi_{-}$ be the normalized potential 
 $$ 
\xi_{-} = -\frac{i}{4}\lambda^{-1} 
\begin{pmatrix}
0 & 1 \\
1 & 0
\end{pmatrix} dz.
 $$
 It is easy to compute the solution $d C_{-} = C_{-} \xi_{-}$ with 
 $C_{-}(z=0, \lambda) = \di (\sqrt{i}^{-1}, \sqrt{i})$, which is given by 
 $$
 C_{-} = 
 \begin{pmatrix}
 \sqrt{i}^{-1}\cosh p &  
 \sqrt{i}^{-1}\sinh p \\ 
 \sqrt{i} \sinh p  &  
 \sqrt{i} \cosh p
 \end{pmatrix},
 $$
 where $p=- i \lambda^{-1} z/4$.
 Then the loop group decomposition $C_{-} = F V_{+}$ with $F \in \LISU$ and 
 $V_+ \in \LSLP$ can be
 computed explicitly:
 $$
 F = 
 \begin{pmatrix}
 \sqrt{i}^{-1}\cosh (p+p^*) &  
\sqrt{i}^{-1} \sinh (p +p^*) \\ 
\sqrt{i} \sinh (p +p^*)  &  
 \sqrt{i} \cosh (p +p^*)
 \end{pmatrix}
\;\;\mbox{and}\;\;
 V_{+}  = 
 \begin{pmatrix}
 \cosh p^* &  
 -\sinh p^*\\ 
 -\sinh p^* &  
 \cosh p^*
 \end{pmatrix},
 $$
 where $p^* = i \lambda \bar z/4$.
 A direct computation shows that 
 $$
 \hat f^{\lambda} =
\frac{1}{2}
 \begin{pmatrix} 
(p -\bar p) \sinh (2 (p+p^*)) & 
\sinh (2  (p+\bar p)) +2 (p- p^*) \\
\sinh (2  (p+\bar p)) - 2(p - p^*)& 
-(p-\bar p) \sinh (2  (p+p^*)) 
 \end{pmatrix}.
 $$
 By the identification \eqref{eq:Nilidenti}, we obtain 
 $$
 f^{\lambda} =(-2i (p-p^*), \;
    - \sinh (2 (p+p^*)), \;
     2 i (p- p^*)\sinh (2 (p+ p^*))).
 $$
 This is an associated family of minimal surfaces in $\Nil$ which actually parametrize 
 the same hyperbolic paraboloid, 
 that is,  $x_3 =x_1 x_2/2$.  It is easy to see that 
 the Abresch-Rosenberg differential of a hyperbolic paraboloid 
 is $4 B^{\lambda} dz^2 = \lambda^{-2}/4dz^2$.
 Note that a hyperbolic paraboloid $x_3=x_1x_2/2$ can be written as
 $$
 f(x_1,x_2)=\exp(x_1e_1)\cdot \exp(x_2e_2).
 $$
 Taking a different $\LISU$-initial condition for the above $C_{-}$,
 the resulting surfaces are the \textit{saddle-type examples} of
 \cite{Abresch-Rosenberg}. 
 They are the special case of the 
 \textit{translational-invariant examples}, see \cite{ILM}.
 The \textit{saddle-type minimal surfaces}
 were discovered by Bekkar \cite{Bekkar}, 
 see also \cite[Part II, Proposition 1.9, Remark 1.10]{IKOS}.
 The saddle-type one was also found by \cite{FMP} 
 as translation invariant minimal surfaces.

\begin{Remark} 
 Let $G$ be a compact semi-simple Lie group equipped with a 
 bi-invariant Riemannian metric.
 Take linearly independent vectors $X$, $Y$ in the Lie algebra. 
 Then the map $f:\mathbb{R}^2\to G$ defined by
 $$
 f(x,y)=\exp(xX)\cdot\exp(yY)
 $$ 
 is a harmonic map. 
 Moreover one can see that $f$ is of finite type ($1$-type in 
 the sense of \cite{BP}.) 
\end{Remark}
\subsection{Helicoids and catenoids}\label{sbsc:helicoids}
 We first note that in place of normalized potentials 
 $\xi_{-} = 
 \lambda^{-1}
 \left(
 \begin{smallmatrix}0 & p \\ q & 0 \end{smallmatrix}
 \right)dz$ with $p, q$ meromorphic functions,
 one can also generate  the same surface by 
 \textit{holomorphic potentials} $\eta$
 of the form
 $$
 \eta = \sum_{i=-1}^{\infty} \lambda^{i} \eta_{i},
 $$
 where $\eta_{2k -1}$ and  $\eta_{2k}$ are respectively 
 off-diagonal and diagonal holomorphic 
 $\mathfrak {sl}_2 \mathbb C$-valued $1$-forms, 
 see \cite{Dorfmeister}.
 
 Let $\eta$ be a holomorphic potential of the form
 \begin{equation}\label{eq:delpot}
\eta =  D dz, \;\;\mbox{with} \;\;
 D =
\begin{pmatrix}
c & a \lambda^{-1} + b \lambda \\
 -a \lambda -b \lambda^{-1}  & -c
\end{pmatrix},  \;\;a = -b, \; c =\frac{1}{2},
 \end{equation}
 where $a\in \mathbb R^{\times}$.
 It is easy to compute that the solution $C_-$ to $d C_- = C_- \eta$ 
 with initial condition
 $C_-(z=0, \lambda) = \id$ is
 $$
 C_- =
 \exp  \left( z D \right). 
 $$
 Let $C_- = F V_+$ be the loop group decomposition of $C_-$
 with $F \in \LISU$ and $V_+ \in \LSLP$, 
 where $F$ takes values in the loop group of $\ISU$, see 
 \cite[Section 5.1]{BRS:Min} 
 for the explicit decomposition using elliptic functions.
 Let $z = x + i y $ be the complex coordinate and $\gamma$ the $2 \pi k$  translation in $y$-direction, 
 that is, $\gamma^* z = z + 2 \pi i k, k \in \mathbb R$. 
 Then $C_-$ changes as $\gamma^* C_- = \exp (2 \pi i kD) 
 C_-$. Since $M=\exp (2 \pi ik D) \in \LISU$, 
 the loop group decomposition for $\gamma^* C_-$
 is computed as 
 $$
 \gamma^* C_- = \left(M F\right)\cdot (\gamma^{*}V_{+}), \;\;
 MF \in \LISU.
 $$
 Let $\hat f^{\lambda}$ be the immersion defined from $F$ 
 via the Sym formula \eqref{eq:symNil}. Then a straightforward computation 
 shows that $\hat f^{\lambda}$ changes 
 by $\gamma$ as follows:
 \begin{equation}\label{eq:monodromy}
 \gamma^* \hat f^{\lambda} = 
 \left(\ad (M) m - 
 X\right)^o -\frac{1}{2}
 \left(\ad(M) (i \lambda \partial_{\lambda} m)+
 [X, \ad(M) m] 
 -Y\right)^{d}, \;\;
 \end{equation}
 where $m$ is the map defined in \eqref{eq:SymMin}, 
 $$
 X =  i \lambda (\partial_{\lambda} M) M^{-1}
 \;\; \mbox{and} \;\; 
 Y =  i \lambda \partial_{\lambda}X = i \lambda \partial_{\lambda}  (i \lambda (\partial_{\lambda} M) M^{-1}).
 $$
 A direct computation shows that 
 $M|_{\lambda =1} = \di (e^{\pi ik }, e^{-\pi ik})$,
 $$
 X|_{\lambda =1} =
 \begin{pmatrix} 
  0 &  2i a(1-e^{2 \pi i k}) \\
  -2i a(1-e^{-2 \pi i k}) &0
 \end{pmatrix}
 $$
 and 
 $$
 Y|_{\lambda =1} = 4 a^2
 \begin{pmatrix} 
 (e^{2 \pi i k}- e^{-2 \pi i k}) -4 \pi i k &  0 \\
 0 &  -(e^{2 \pi i k}- e^{-2 \pi i k}) +4 \pi i k 
 \end{pmatrix}.
 $$
 By the identification \eqref{eq:Nilidenti}, we see that 
 this gives a helicoidal motion along the $x_3$-axis 
 through the point $(-4a, 0, 0)$ with the angle $2 \pi k$
 and the pitch $8 a^2$, see Appendix \ref{sc:isoNil} 
 for the isometry group of $\Nil$.
 Thus the resulting surface $f^{\lambda}|_{\lambda =1}$ is a helicoid.
 It is easy to see that the Abresch-Rosenberg 
 differential of a helicoid
 is $4 B^{\lambda} dz^2 =-4 a^2 \lambda^{-2}dz^2, \; a \in \mathbb R^{\times}$.

 Choosing some appropriate $\LISU$-initial condition for $C_-$
 above will yield a surface of revolution, that is, 
 a catenoid surface. Catenoids and helicoids are not 
 isometric in $\Nil$, even though their Gauss map are isometric  
 in $\mathbb H^2$.
\begin{Remark}
\mbox{}
\begin{enumerate}
\item 
 If the parameter $a$ in the potential \eqref{eq:delpot} 
 is chosen properly, 
 then the resulting surface is the standard helicoid as in \eqref{eq:standardhelicoid}. 
 It is a minimal helicoid in $\mathbb E^3$, see Appendix \ref{subsc:helicoid}.
\item
 The holomorphic potential defined in \eqref{eq:delpot} 
 produces, via the immersion $m$, a nonzero constant mean curvature 
 surface $m$ of revolution in Minkowski $3$-space, \cite[Section 5.1]{BRS:Min}. 
 More precisely, the axis of this surface of revolution is timelike. 
 It would be interesting to know what surfaces correspond to the 
 above potential with the condition $(a+b)^2-c^2$ 
 negative with $a \neq -b$, positive or zero, that is,
 the axis is timelike which is not parallel to $e_3$, 
 spacelike or lightlike in Minkowski $3$-space, 
 respectively.

\item To the best of our knowledge, the associated family of a helicoid gives 
 a new family of minimal surfaces. All these surfaces have the same support function.
\end{enumerate}
\end{Remark}
\appendix
\section{Surfaces with holomorphic Abresch-Rosenberg differential}\label{sc:Appendix}
 \subsection{}
 In this appendix, we determine all surfaces with 
 holomorphic Abresch-Rosenberg differential. 
\begin{Theorem}\label{thm:ARholo}
 Let $f$ be a conformal immersion into $\Nil$ and $B$ its Abresch-Rosenberg differential 
 defined in \eqref{eq:ARdiff}. If $B$ is holomorphic, then the surface $f$ 
 is one of the following{\rm:}
\begin{enumerate}
\item A constant mean curvature surface.\label{en:CMC}
\item A Hopf cylinder.
\end{enumerate}
\begin{proof}
 The structure equations for a surface with holomorphic $B$ can be 
 phrased as
 \begin{eqnarray}
 & -B H_{\bar z}e^{-w/2+u/2}= H_z e^{u/2},\label{a1}\\
 &  \frac{1}{2} w_{z \bar z} + e^{w} -|B|^2 e^{-w}+\frac{1}{2}(H_{z \bar z}
 +p ) e^{-w/2 + u/2}  =0, \label{a2}\\
 & - \bar B H_z e^{-w/2+u/2}=H_{\bar z} e^{u/2},\label{a3} 
 \end{eqnarray}
 where $p$ is $H_z (-w/2 + u/2)_{\bar z}$ or 
 $H_{\bar z} (-w/2 + u/2)_{z}$ respectively. Since $H$ is real, 
 taking the complex conjugate of \eqref{a1} and inserting 
 it into \eqref{a3}, we obtain 
 \begin{equation}\label{eq:Bholocond}
 \bar B H_z e^{-\bar w/2} = \bar B H_z e^{-w/2}.
 \end{equation}
 This equation holds if $B =0$ or $H={\rm const}$ or 
 $e^{-\bar w/2}= e^{- w/2}$. If $H={\rm const}$, then we 
 are in case (\ref{en:CMC}). Let us assume now $H$ not 
 constant.
 If $B$ is identically zero, then \eqref{a1} and \eqref{a3} show 
 that $H$ is constant.
 We assume now $B\neq 0$ and $H \neq {\rm const}$. 
 Then the equation \eqref{eq:Bholocond} implies $e^{w/2}= e^{\bar w/2}$. 
 Using the identity $e^{w/2} =- H e^{u/2}/2 +i h/4$, we obtain that 
 the support function $h = 2 (|\psi_1|^2 -|\psi_2|^2)$ is 
 equal to zero, that is $|\psi_1| = |\psi_2|$. Thus the surface is 
 tangent to $E_3$ by Proposition \ref{heightzero} and this condition is equivalent to that 
 the surface is a Hopf cylinder.
\end{proof}
\end{Theorem}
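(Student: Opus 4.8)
The plan is to extract everything from the Codazzi equations \eqref{CodazziEquation}, which already encode the coupling between the mean curvature $H$ and the Abresch--Rosenberg function $B$. First I would impose holomorphicity of $B$: the hypothesis means $B_{\bar z}=0$, and since $H,u$ are real and $B$ depends holomorphically on $z$, conjugation gives $\bar B_z=0$ as well. Feeding $B_{\bar z}=0$ and $\bar B_z=0$ into the two equations of \eqref{CodazziEquation} annihilates their left-hand sides and leaves two purely algebraic relations among $H_z,H_{\bar z},B,\bar B$ and $e^{w/2}$, of the shape $H_z e^{u/2}=-B\,H_{\bar z}\,e^{-w/2+u/2}$ together with its mirror relation $H_{\bar z}e^{u/2}=-\bar B\,H_z\,e^{-w/2+u/2}$.

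Next I would eliminate. Taking the complex conjugate of the first relation, using $\overline{H_z}=H_{\bar z}$ because $H$ is real, and comparing it with the second relation produces a single factored identity
$$\bar B\,H_z\bigl(e^{-\bar w/2}-e^{-w/2}\bigr)=0,$$
valid at every point. The remainder of the argument is then a reading-off of which factor vanishes. If $B\equiv 0$, the two algebraic relations force $H_z=H_{\bar z}=0$, so $H$ is constant and we are in case (1); likewise if $H$ is constant we land in case (1) directly. In the only remaining situation the third factor must vanish, i.e. $e^{w/2}=e^{\bar w/2}$ is real. Here I would substitute the identification $e^{w/2}=\mathcal U=-\tfrac{H}{2}e^{u/2}+\tfrac{i}{4}h$ from \eqref{def-exp(w/2)}: reality of $e^{w/2}$ forces its imaginary part $h/4$ to vanish, so the support function $h$ is identically zero. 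By Proposition \ref{heightzero} this means $f$ is everywhere tangent to the Reeb field $E_3$, hence a Hopf cylinder, which is case (2).

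The step I expect to be the genuine obstacle is the globalization: the factored identity above is only pointwise, and a priori its three factors could vanish on different portions of $M$, so the local ``trichotomy'' does not upgrade automatically to a global dichotomy between constant mean curvature and Hopf cylinder. To close this I would first exploit holomorphicity of $B$ once more: if $B\not\equiv0$ its zeros are isolated, so on the open dense set where $B\neq0$ the identity collapses to $H_z\,h=0$, after noting that $e^{-\bar w/2}-e^{-w/2}$ equals a nowhere-zero multiple of $h$; continuity then extends $H_z\,h\equiv0$ to all of $M$. The delicate final point is to turn $H_z\,h\equiv0$ on the connected surface into ``either $H$ is globally constant or $h\equiv0$ globally,'' ruling out a surface that is nonconstant-$H$ on one region and nonzero-$h$ on a disjoint one. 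For this I would invoke real-analyticity of the solution, which I expect to follow from the ellipticity of the Gauss equation \eqref{GaussEquation} governing $w$; this is where the only real work beyond bookkeeping lies.
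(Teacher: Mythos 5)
Your proposal is correct and takes essentially the same route as the paper: setting $B_{\bar z}=\bar B_z=0$ in the Codazzi equations \eqref{CodazziEquation}, conjugating one of the resulting algebraic relations and comparing it with the other to obtain $\bar B\,H_z\,\bigl(e^{-\bar w/2}-e^{-w/2}\bigr)=0$, and then running the identical case analysis ($B\equiv0$ forces $H$ constant; otherwise reality of $e^{w/2}=-\tfrac{H}{2}e^{u/2}+\tfrac{i}{4}h$ forces $h\equiv0$, hence a Hopf cylinder via Proposition \ref{heightzero}). The globalization concern you flag at the end is legitimate, but the paper's own proof does not address it either---it passes from the pointwise identity \eqref{eq:Bholocond} to the global dichotomy just as directly---so your extra discussion (isolated zeros of the holomorphic $B$, continuity giving $H_z\,h\equiv0$, real-analyticity) is added diligence beyond the published argument rather than the repair of a defect in your own.
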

 General Hopf cylinders are of constant mean curvature $H$ 
 if and only if the curvature of the base curve is constant 
 and equal to $2H$. 
 Therefore the only minimal Hopf cylinders are 
 vertical planes.
 In the proof of the above theorem, 
 we have seen the case where the holomorphic differential 
 $B$ vanishes identically. This describes in fact  
 constant mean curvature surfaces. From \cite{Abresch-Rosenberg}, 
 such surfaces are classified as follows. 
\begin{Proposition}[\cite{Abresch-Rosenberg}]
 The surfaces with identically vanishing Abresch-Rosenberg differential are
 constant mean curvature surfaces and they are classified as follows:
\begin{enumerate}
\item For $H \neq 0$, they are spheres of revolution.
\item For $H =0$, they are 
 vertical planes or horizontal umbrellas. 
\end{enumerate}
\end{Proposition}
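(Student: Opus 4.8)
The plan is to separate the elementary analytic assertion---that vanishing of $B$ forces constant mean curvature---from the genuinely geometric classification, handling the minimal case by the loop-group machinery already set up and deferring the nonzero case to the rigidity of Abresch--Rosenberg.

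First I would read off the constancy of $H$ from the structure equations recorded in the proof of Theorem~\ref{thm:ARholo}. Putting $B\equiv 0$ into \eqref{a1} and \eqref{a3} leaves $H_z e^{u/2}=0$ and $H_{\bar z}e^{u/2}=0$; since $e^{u/2}>0$ this gives $H_z=H_{\bar z}=0$, so $H$ is constant and every surface with identically vanishing Abresch--Rosenberg differential has constant mean curvature. This is the only routine part.

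Next, for $H=0$ I would argue through the normal Gauss map. Since the data are real-analytic, either $h\equiv 0$ or $h$ is nonzero on an open dense set. If the surface is vertical ($h\equiv 0$), then by Proposition~\ref{heightzero} and the discussion of Hopf cylinders it is a Hopf cylinder, and a minimal Hopf cylinder is a vertical plane. Otherwise, on the locus where $f$ is nowhere vertical, Lemma~\ref{thm:min-sym} makes $e^{w/2}$ purely imaginary, so $\operatorname{Im}w$ is constant, $(w-\bar w)_{\bar z}=0$ and $e^{-\bar w/2}=-e^{-w/2}$. Differentiating $g=\psi_2/\overline{\psi_1}$ from \eqref{eq:Normal} with the Berdinsky system \eqref{eq:Lax-Niltilde}--\eqref{eq:U-V1} (and $H_z=0$) then yields
$$ g_{\bar z}\,\overline{\psi_1}^{\,2} = -\bar B\,e^{-w/2}\bigl(|\psi_1|^2-|\psi_2|^2\bigr), $$
so that, as $|\psi_1|\neq|\psi_2|$ off the vertical locus, $B\equiv 0$ is equivalent to holomorphicity of $g$. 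Correspondingly the normalized potential \eqref{eq:pair} degenerates to the nilpotent form $\xi_-=\lambda^{-1}\bigl(\begin{smallmatrix}0&-p\\0&0\end{smallmatrix}\bigr)dz$. Solving $dC_-=C_-\xi_-$ explicitly gives $C_-=C_-(0)\bigl(\begin{smallmatrix}1&-\lambda^{-1}P\\0&1\end{smallmatrix}\bigr)$ with $P=\int p\,dz$; the holomorphic change of conformal coordinate $z\mapsto P$ normalizes $p$ to a constant, reducing everything to the potential treated in Section~\ref{sbsc:plane}, while the constant $\LISU$-dressing of the initial condition accounts for the tilt. Running the Weierstrass representation of Section~\ref{sbsc:plane} backwards therefore identifies the surface as a horizontal umbrella (the horizontal plane being the case $g\equiv 0$).

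The case $H\neq 0$ is where I expect the real obstacle. The minimal-surface apparatus of Sections~\ref{sc:minimal}--\ref{sc:Sym} is unavailable, since $e^{w/2}$ is no longer purely imaginary and, by Corollary~\ref{coro:CMCnotinNil}, the associated family leaves $\Nil$; hence the loop-group reconstruction that worked for umbrellas cannot be applied verbatim to read off rotational symmetry at $\lambda=1$. Although the $\lambda^{-1}$-potential is again nilpotent when $B\equiv 0$, turning the vanishing of $B$ into the statement that the surface is a round sphere of revolution is precisely the Hopf-type rigidity theorem of Abresch--Rosenberg \cite{Abresch-Rosenberg}, and I would quote it here. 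A self-contained argument inside the present formalism would instead require showing that, for fixed constant $H\neq 0$, the system \eqref{integrCMC} together with \eqref{eq:Lax-Niltilde} and $B\equiv 0$ admits a unique solution modulo ambient isometry, and then integrating that single solution to recognize the revolution sphere.
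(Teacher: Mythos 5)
Measured against the paper, your first step and your handling of $H \neq 0$ are exactly what the paper does: the paper gives no proof of this Proposition beyond attributing the classification to \cite{Abresch-Rosenberg}, and the one assertion it does argue --- that $B \equiv 0$ forces $H$ to be constant --- is obtained in the proof of Theorem~\ref{thm:ARholo} precisely by setting $B \equiv 0$ in \eqref{a1} and \eqref{a3}, as you do. The only genuine departure is your loop-group argument for the case $H = 0$, which the paper does not attempt, and that is where there is a gap.

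The computational core of that argument is sound: the identity $g_{\bar z}\,\overline{\psi_1}^{\,2} = -\bar B\, e^{-w/2}(|\psi_1|^2 - |\psi_2|^2)$ does follow from \eqref{eq:Lax-Niltilde}--\eqref{eq:U-V1} together with $e^{\bar w/2} = -e^{w/2}$, so off the vertical set $B \equiv 0$ is indeed equivalent to holomorphicity of $g$, the normalized potential is nilpotent, and $C_-$ integrates explicitly. The gap is the final identification. The initial condition needed to reconstruct the given surface is the loop $F(z_*, \lambda)$ (equivalently $F_-(z_*, \lambda)$), which is genuinely $\lambda$-dependent --- already for the plane of Section~\ref{sbsc:plane} the frame $F(z_0, \lambda)$ at a point $z_0 \neq 0$ is linear in $\lambda^{\pm 1}$ --- so it is not a ``constant tilt''; and Section~\ref{sbsc:plane} computes the outcome for exactly one initial condition, $\di(\sqrt{i}^{-1}, \sqrt{i})$, merely asserting without proof that the others give non-vertical planes. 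That assertion is itself most naturally justified by the very classification you are proving, so invoking it here is circular. Note also that the admissible initial conditions $U(\lambda) \in \LISU$ form an infinite-dimensional family while horizontal umbrellas form a three-parameter family; the collapse of the former onto the latter under the Sym formula \eqref{eq:symNil} --- which would require evaluating Sym on $U(\lambda)F$ and tracking the correction terms $-i\lambda(\partial_{\lambda}U)U^{-1}$ exactly as in the monodromy computation \eqref{eq:monodromy} --- is the real geometric content of the $H = 0$ case, and it is carried out neither in the paper (which simply cites Abresch--Rosenberg here as well) nor in your proposal. A minor slip besides: the horizontal plane is not ``the case $g \equiv 0$''; no surface in $\Nil$ has $g \equiv 0$, since that would force $\phi_3 = 2\psi_1\overline{\psi_2} \equiv 0$, which Remark~\ref{rm:psizero} excludes, and the horizontal plane of Section~\ref{sbsc:plane} has $g = -iz$ at $\lambda = 1$, a nonconstant holomorphic map.
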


\subsection{}\label{subsc:familysubgroup}
 It is known that any two-dimensional Lie subgroup of 
 $\mathrm{Nil}_3$  
 is normal (see for example \cite[Corollary 3.8]{MP}). 
 Moreover all two-dimensional Lie subgroups belong to the 
 $1$-parameter family 
 $\{G(t)\> \vert\ t\in \mathbb{R}P^1\}$ 
 of normal subgroups defined by
\begin{equation}\label{normalsubgroup}
 G(t) = \{(x_1,tx_1,x_3) | x_1,x_3 \in \mathbb{R}\}.
 \end{equation}
 Here the coordinate plane 
 $x_1 = 0$, that is
 $\{(0, x_2, x_3) | x_2,x_3\in \mathbb{R}\}$, 
 is regarded as $G(\infty)$. 
 Note that $G(0)$ is the coordinate plane 
 $x_2 = 0$. 
 For any $t\not=t^\prime$, 
 $G(t)$ and $G(t^\prime)$ only intersect along 
 the subgroup 
 $\varGamma=\{(0,0,x_3)|x_3\in \mathbb{R}\}$, that is, the $x_3$-axis. 
 There are no more two-dimensional Lie subgroups \cite[Theorem 3.6-(5)]{MP}. 
 Each $G(t)$ is realized as a vertical plane in $\mathrm{Nil}_3$. 
 Every vertical plane is congruent to $G(t)$ for some $t\in \mathbb{R}P^1$.

\section{Isometry group of three-dimensional Heisenberg group}\label{sc:isoNil}
 \subsection{}
 The identity component $\mathrm{Iso}_{\circ}(\mathrm{Nil}_{3}(\tau))$
 of the isometry group of $\mathrm{Nil}_3(\tau)$ is the semi-direct product 
 $\mathrm{Nil}_3(\tau)\rtimes \mathrm{U}_1$ if $\tau\not=0$. Here 
 $\mathrm{U}_1$ is identified with $\mathbb{S}^1=\{e^{i\theta}
 \ \vert \ \theta\in \mathbb{R}\}$.

 The action of $\mathrm{Nil}_3(\tau)\rtimes \mathrm{U}_1$ is given by
$$
((a,b,c),e^{i\theta})\cdot (x_1,x_2,x_3)=
(a,b,c)\cdot (\cos \theta{x}_1-\sin\theta{x}_2,\sin\theta{x}_1+\cos\theta{x}_2,x_3).
$$
 The Heisenberg group $\mathrm{Nil}_3$ itself acts on $\mathrm{Nil}_3$ by left translations
 and is represented by $(\mathrm{Nil}_{3}(\tau)\rtimes \mathrm{U}_1)/\mathrm{U}_1$ as 
 a naturally reductive homogeneous space. One can see that 
 this homogeneous space is not Riemannian symmetric. 
\subsection{}\label{subsc:helicoid}
 The Lie algebra $\mathfrak{iso}(\mathrm{Nil}_{3}(\tau))$ is generated 
 by four Killing vector fields $E_1$, $E_2$, $E_3$ and 
 $E_4=-x_{2}\partial_{x_1}+x_{1}\partial_{x_2}$. 
 The commutation relations are:
$$
[E_4,E_1]=E_2,
\
[E_4,E_2]=-E_1,
\
[E_1,E_2]=E_3.
$$
 The $1$-parameter transformation group 
 $\{\rho_\theta\}$ generated by $E_4$ consists of 
 rotations $\rho_\theta=((0,0,0),e^{i\theta})$ of angle $\theta$ along the $x_3$-axis. 
 An isometry $\rho^{(\mu)}_t\in \mathrm{Nil}_{3}(\tau)\rtimes \mathrm{U}_1$ of the form 
 $\rho_{t}^{(\mu)} =((0,0,\mu{t}),e^{it})$ 
 is called a \textit{helicoidal motion with pitch} $\mu$. In particular,  
 a helicoidal motion with pitch $0$ is nothing but a rotation $\rho_t$.
\begin{Definition}
 A conformal immersion $f:M\to \mathrm{Nil}_3(\tau)$ is said to be a 
 \textit{helicoidal surface} if it is invariant under some helicoidal motion. 
 In particular, $f$ is said to be a \textit{surface of revolution} if it is invariant under some 
 rotation $\rho_t$.
\end{Definition}
 The standard helicoid 
\begin{equation}\label{eq:standardhelicoid}
 f(x_1,x_2)=(x_1,x_2,\mu\tan^{-1}(x_2/x_1))
\end{equation}
 is a helicoidal minimal surface in $\mathrm{Nil}_3(\tau)$. 
 In fact this surface is invariant under helicoidal motions 
 of pitch $\mu$. Note that 
 this helicoid is minimal in any $E(\kappa,\tau)$, \cite{BBBI}.

 Tomter \cite{Tomter} studied (non-minimal) constant mean 
 curvature rotational surfaces in 
 $\mathrm{Nil}_3$.  Caddeo, Piu and Ratto \cite{CPR} 
 studied rotational surfaces 
 of constant mean curvature (including minimal surfaces) 
 in $\mathrm{Nil}_3$ by the equivariant submanifold geometry
 (in the sense of W.~Y.~Hsiang).  
 Figueroa, Mercuri and Pedrosa \cite{FMP} investigated 
 surfaces of constant mean curvature 
 invariant under some $1$-parameter isometry groups.
\subsection{}
 In this subsection, we classify homogeneous surfaces in 
 $\mathrm{Nil}_3$.
\begin{Definition}
 A surface $f:M\to \mathrm{Nil}_3$ is said to be \textit{homogeneous} 
 if there exists a connected Lie subgroup $G$ of 
 $\mathrm{Iso}_{\circ}(\mathrm{Nil}_3)$ which acts transitively on the 
 surface. 
\end{Definition}
 Then we have the following classification of homogeneous surfaces in 
 $\mathrm{Nil}_3$, cf., \cite{IKN}.
\begin{Proposition}\label{homogenroussurfaces}
 Homogeneous surfaces in $\mathrm{Nil}_3$ are congruent to one of the 
 following surfaces{\rm:}
\begin{enumerate}
 \item An orbit of a normal subgroup $G(t)$ defined in {\rm(\ref{normalsubgroup})}.
 \item An orbit of the Lie subgroup $\{((0,0,s),e^{it})\ \vert \ s,t\in \mathbb{R}\}
 \subset \mathrm{Nil}_3\rtimes \mathrm{U}_1$. 
\end{enumerate} 
 In the former case, surfaces are vertical planes. Surfaces in the latter case are 
 Hopf cylinders over circles. Thus the only homogeneous minimal surfaces in 
 $\mathrm{Nil}_3$ are vertical planes. 
\end{Proposition}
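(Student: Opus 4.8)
The plan is to reduce the classification to a Lie-algebraic one. By definition a homogeneous surface $\Sigma$ is an orbit $G\cdot p$ of a connected subgroup $G\subseteq\mathrm{Iso}_\circ(\mathrm{Nil}_3)=\mathrm{Nil}_3\rtimes\mathrm{U}_1$ (Section \ref{sc:isoNil}), and its tangent plane at each point is the image of $\mathfrak{g}=\operatorname{Lie}(G)$ under evaluation of Killing fields. Since applying an ambient isometry is a congruence, I may classify the relevant subalgebras $\mathfrak{g}\subseteq\mathfrak{iso}=\langle E_1,E_2,E_3,E_4\rangle$ only up to conjugation. The orbit is a surface exactly when $\dim\mathfrak{g}-\dim\mathfrak{g}_p=2$ for the isotropy subalgebra $\mathfrak{g}_p$; as the full $4$-dimensional group has the whole space as its only open orbit (its isotropy $\mathrm{U}_1$ is one-dimensional), the cases to examine are $\dim\mathfrak{g}\in\{2,3\}$.

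First I would dispose of the three-dimensional subalgebras. Writing $\pi\colon\mathfrak{iso}\to\mathfrak{iso}/\mathfrak{nil}_3\cong\mathbb{R}E_4$, either $\pi(\mathfrak{h})=0$, forcing $\mathfrak{h}=\mathfrak{nil}_3$ (whose orbits are all of $\mathrm{Nil}_3$, not surfaces), or $\mathfrak{h}\cap\mathfrak{nil}_3$ is a $2$-dimensional subalgebra $\langle w,E_3\rangle$ with $w=w_1E_1+w_2E_2$. Using the relations $[E_4,E_1]=E_2,\ [E_4,E_2]=-E_1,\ [E_1,E_2]=E_3$ one finds $[E_4+v,w]=(w_1E_2-w_2E_1)+cE_3$, whose horizontal part is $w$ rotated by $90^\circ$ and hence independent of $w$, so it cannot lie in $\mathfrak{h}$. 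Thus no three-dimensional subalgebra other than $\mathfrak{nil}_3$ survives, and only $\dim\mathfrak{g}=2$ with discrete isotropy can produce a homogeneous surface.

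The core step is the classification of $2$-dimensional subalgebras. If $\mathfrak{g}\subseteq\mathfrak{nil}_3$, then $[X,Y]$ lies in $\mathbb{R}E_3$, which forces $E_3\in\mathfrak{g}$ (the abelian subcase reduces to the same conclusion), so $\mathfrak{g}=\langle w_1E_1+w_2E_2,\,E_3\rangle$; these are exactly the Lie algebras of the subgroups $G(t)$ in \eqref{normalsubgroup}, and a direct computation shows each orbit $G(t)\cdot q$ is a vertical plane. If $E_4$ occurs, I write $\mathfrak{g}=\langle E_4+x_1E_1+x_2E_2+x_3E_3,\,Y\rangle$ with $Y$ spanning the one-dimensional $\mathfrak{g}\cap\mathfrak{nil}_3$; since $[X,Y]\in\mathfrak{nil}_3$ it must equal $\lambda Y$, and the $E_1,E_2$-components yield $(\lambda^2+1)y_2=0$, forcing $Y\in\mathbb{R}E_3$ and hence $\mathfrak{g}=\langle E_4+x_1E_1+x_2E_2,\,E_3\rangle$. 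Conjugating by a translation moves the axis of the rotational field $E_4$ and absorbs the horizontal term, normalizing $\mathfrak{g}$ to $\langle E_4,E_3\rangle$, the Lie algebra of $\{((0,0,s),e^{it})\}$, whose generic orbit is a vertical circular cylinder, i.e.\ a Hopf cylinder over a circle.

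Finally I would read off the geometric conclusions: type (1) orbits are vertical planes, which are minimal; type (2) orbits are Hopf cylinders whose base curves are circles, so by the fact that a Hopf cylinder has mean curvature equal to half the curvature of its base curve, their mean curvature is a nonzero constant and they are not minimal. Hence the only homogeneous minimal surfaces are vertical planes. I expect the main obstacle to be organizing the subalgebra bookkeeping cleanly—in particular excluding all three-dimensional subalgebras except $\mathfrak{nil}_3$ and checking that the isotropy is discrete precisely on the two-dimensional orbits—rather than any single hard computation.
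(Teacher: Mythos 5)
Your proposal is correct, and its skeleton---reduce a homogeneous surface to an orbit of a connected subgroup of $\mathrm{Nil}_3\rtimes\mathrm{U}_1$, then classify the relevant subalgebras of the four-dimensional isometry algebra up to conjugation---is the same as the paper's; in particular your computation $[X,Y]=\lambda Y$ with $(\lambda^2+1)y_2=0$ for the two-dimensional subalgebras containing an $E_4$-component is exactly the paper's Case~1/Case~2 analysis in unified form. The genuine divergence is in the three-dimensional case: the paper does not rule such groups out, but instead shows that if $\dim G=3$ then the isotropy at $\id$ must be all of $\mathrm{U}_1$, so that $\mathfrak{g}$ contains $0\oplus 1$ and hence a two-dimensional subalgebra $\mathfrak{g}_o\subset\mathfrak{nil}_3$ with $G_o\cdot\id=G\cdot\id$, thereby reducing to the two-dimensional classification; you instead prove the stronger statement that no three-dimensional subalgebra other than $\mathfrak{nil}_3$ exists, via the observation that $[E_4+v,w]$ has horizontal part equal to $w$ rotated by $90^\circ$, which cannot lie in $\langle w,E_3\rangle$. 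Your route is also self-contained at two further points: you prove directly that every two-dimensional subalgebra of $\mathfrak{nil}_3$ contains $E_3$, where the paper cites \cite{MP} for the fact that the $G(t)$ are the only two-dimensional subgroups, and you make explicit the conjugation by a translation that normalizes $\langle E_4+x_1E_1+x_2E_2,\,E_3\rangle$ to $\langle E_4,E_3\rangle$, which the paper leaves implicit in the word ``congruent''. The only soft spot is your parenthetical treatment of the abelian subcase when showing $E_3\in\mathfrak{g}$ for $\mathfrak{g}\subset\mathfrak{nil}_3$: one should say that $[X,Y]=0$ forces the horizontal parts of $X$ and $Y$ to be parallel, so a nontrivial linear combination of them is a nonzero multiple of $E_3$; this is a one-line fix, not a gap.
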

\begin{proof}
 Let $f : M \to \mathrm{Nil}_3$ be a conformal homogeneous immersion
 with the transitive group $G$.
 We first show that without loss of generality the surface $f(M)$
 contains the identity element $\id$ of $\mathrm{Nil}_3$. Let $z_0 \in M$, 
 then $\hat f = L_{f(z_0)^{-1}} \circ f$ is a conformal homogeneous 
 immersion with group $\hat G = L_{f(z_0)^{-1}} G L_{f(z_0)}$.
 Since $\hat f(z_0) = \id$, the claim follows.

 We next show that every conformal homogeneous immersion $f : M \to 
 \mathrm{Nil}_3$ admits a two-dimensional transitive group.
 If the transitive group $G$ has $\dim G =4$, then 
 $G = \mathrm{Iso}_{\circ}(\mathrm{Nil}_3)$. However then $f (M) = 
 G . \id = \mathrm{Nil}_3$, which contradicts the fact $f(M)$
 has dimension $2$.
 If the transitive group $G$ has $\dim G =3$, then  the isotropy 
 subgroup $G_{\id}$ of $G$ has dimension $1$. Since 
 $\mathrm{Iso}_{\circ}(\mathrm{Nil}_3) = \mathrm{Nil}_3 \rtimes \mathrm{U}_1$ with 
 normal subgroup $\mathrm{Nil}_3$, we can write every $g_o \in G_{\id}$
 in the form $g_o = (n, s)$, where $n \in \mathrm{Nil}_3$ and 
 $s \in \mathrm{U}_1$. Then $\id = g_o . \id = n$. Hence $g_o 
 \in \mathrm{U}_1$.
 Therefore $G_{\id} = \mathrm{U}_1$. On the Lie algebra level, 
 let $n_1 \oplus 
 s_1$ and $n_2 \oplus  s_2$, $0 \oplus 1$ be a basis for 
 $\mathfrak g = \operatorname{Lie} G$. Then also 
 $n_1  \oplus 0$, $n_2 \oplus 0$ and $0 \oplus 1$ are a basis 
 of $\mathfrak g$. Moreover, $n_1 \oplus 0$ and $n_2 \oplus 0$ 
 generate a two-dimensional subalgebra $\mathfrak g_o$. Let 
 $G_o$ denote the corresponding connected subgroup of $G$, then 
 $G = G_o \cdot \mathrm{U}_1$ and $G_o . \id = G . \id = f(M)$. 
 Hence the claim follows.

 Next we consider the family $G(t), \; (t \in \mathbb RP^1),$ of 
 two-dimensional abelian normal subgroups of $\mathrm{Nil}_3$ 
 defined in Section \ref{subsc:familysubgroup}. 
 It is known that the groups $G(t)$ are the only two-dimensional 
 Lie subgroups of $\mathrm{Nil}_3$.

 We now classify the two-dimensional connected transitive 
 Lie subgroups $G_o$ of $\mathrm{Iso}_{\circ}(\mathrm{Nil}_3) =\mathrm{Nil}_3 \rtimes 
 \mathrm{U}_1$.
 Let $a_1 = n_1 \oplus s_1$ and $a_2 = n_2 \oplus s_2$ be a basis of 
 $\mathfrak g_o = \operatorname{Lie} G_o$. If $s_1 = s_2 =0$, then 
 $G_o \subset \mathrm{Nil}_3$ and the claim follows by what was 
 said above. Assume $s_1 \neq 0$. Then after some subtraction and 
 some scaling we can assume $a_1 = n_1 \oplus 1$ and $a_2 = n_2$.
 For the commutator of $a_1$ and $a_2$ we obtain 
\begin{equation}\label{eq:commutator}
[a_1, a_2] = [n_1, n_2] + [1, n_2] \oplus 0.
\end{equation}
 In the following discussion, we choose 
 the basis $1$ of $\mathfrak u_1$ such that the left translation of $1$ 
 to be the Killing vector field $E_4$ defined in \ref{subsc:helicoid}.

 Case $1$, $[a_1, a_2]=0$: We obtain  $[n_1, n_2] + [1,  n_2]=0$.
 Since $[n_1, n_2]$ is a multiple of $e_3$, 
 putting $n_2 =  \alpha_1 e_1 + \alpha_2 e_2 + \alpha_3 e_3$ and 
 $[n_1, n_2] = \beta e_3$, 
 we conclude $\alpha_1 = \alpha_2 =0$ and $n_2 = \alpha_3 e_3$.

 Case $2$, $[a_1, a_2]\neq 0$: In this case we obtain $ 
 \alpha n_2 (= \alpha a_2) =[n_1, n_2] + [1,  n_2]$ with $\alpha \neq 0$, 
 since we do not have the $a_1(=n_1 \oplus 1)$-component. 
  Putting 
 $n_2 = \alpha_1 e_1 + \alpha_2 e_2 + \alpha_3 e_3$, 
 $[n_1, n_2] = \beta e_3$, we obtain
 $$
 \alpha (\alpha_1 e_1 + \alpha_2 e_2 + \alpha_3 e_3) 
 = \beta e_3 + \alpha_1 e_2 - \alpha_2 e_1 + \alpha_3 e_3,
 $$
 whence $\alpha \alpha_1 = - \alpha_2$, $\alpha \alpha_2 = \alpha_1$, 
 $\beta + \alpha_3 = \alpha \alpha_3$. Thus $\alpha_1 = - \alpha^2 \alpha_1$ and 
 $\alpha_1 =0$. Hence also $\alpha_2 =0$ and $n_2 = e_3$ can be assumed. 
 But then $[a_1, a_2]=[a_1, e_3] =0$, which is a contradiction.
 Altogether we have obtained the following list of possible Lie algebras:
 $$ 
 \mathfrak g(t) = \operatorname {Lie} G(t) \;\;\mbox{and}\;\;
 <n_1 \oplus 1, e_3>.
 $$
 This completes the proof.
\end{proof}
\begin{Remark}
 Since homogeneous Riemannian spaces are complete, we find 
 for any $g \in G$, there exists a unique $\gamma \in \operatorname{Aut}
 \widetilde M$ and $z \in \widetilde M$:
 $$
f(\gamma \cdot z) = g. f(z).
 $$
 Here $ \widetilde M$ denotes the universal cover of $M$.
\end{Remark}
\section{Spin structure on Riemann surfaces}\label{sc:spin}
 A spin structure on an oriented Riemannian $n$-manifold $(M,g)$ is 
 a certain principal fiber bundle over $M$ with structure group $\mathrm{Spin}_n$ which is 
 a $2$-fold covering over the orthonormal frame bundle 
 $\mathrm{SO}(M)$ of $M$.
 In the two-dimensional case, a spin structure can be 
 defined in the following manner \cite{KS}. 

 A \textit{spin structure} on a Riemann surface $M$ is a 
 complex line bundle $\varSigma$ over $M$ together with a smooth 
 surjective fiber-preserving map $\mu:\varSigma\to K_M$ to the 
 holomorphic cotangent bundle $K_M$ of $M$ satisfying 
$$
\mu(\alpha {s})=\alpha^{2}\mu(s)
$$
 for any section $s$ of $\varSigma$ and any function $\alpha$. 
 One can see that 
 $\varSigma\otimes \varSigma$ is isomorphic to $K_M$.
 The complex line bundle $\varSigma$ is called the 
 \textit{spin bundle} and the section $s$ of $\varSigma$ 
 is called a \textit{spinor} of $M$. 
 The squaring map $\mu$ is kept implicit by writing $s^2$ for $\mu(s)$ and $st$ for 
 $\{\mu(s+t)-\mu(s-t)\}/4$. Take a local complex coordinate $z$ on $M$. Then
 there exist two sections of $\varSigma$ whose images under 
 $\mu$ are $dz$. Choose one of these sections,
 and refer to it consistently as $(dz)^{1/2}$. 
 Under this notation, every spinor can be expressed locally in the form $\psi (dz)^{1/2}$.

\section{Harmonic maps into reductive homogeneous spaces}
 \label{sc:harmonic-homogeneous}
 \subsection{}
 Let $G/K$ be a reductive homogeneous space. 
 We equip $G/K$ with a $G$-invariant Riemannian metric which is
 derived from a left-invariant Riemannian metric on $G$.
 
 Then the orthogonal complement $\mathfrak{p}$ of 
 the Lie algebra $\mathfrak{k}$ of $K$ 
 can be identified with the tangent space of $G/K$ 
 at the origin $o=K$.  
 The Lie algebra $\mathfrak{g}$ is decomposed into the orthogonal direct sum:
$$
\mathfrak{g}=\mathfrak{k}\oplus \mathfrak{p}
$$
 of linear subspaces. We define a symmetric bilinear map
 $U:\mathfrak{p}\times \mathfrak{p}\to \mathfrak{p}$ by
$$
 2\langle U(X,Y),Z\rangle=\langle X,[Z,Y]_{\mathfrak{p}}\rangle+
 \langle Y,[Z,X]_{\mathfrak{p}}\rangle,\ \ 
 X,Y,Z \in \mathfrak{p},
$$
 where $[Z,Y]_{\mathfrak{p}}$ denotes the $\mathfrak{p}$-component of 
 $[Z,Y]$. A Riemannian reductive homogeneous space $G/K$ is said to be 
 \textit{naturally reductive} if $U=0$. In particular, when $G$ is a compact semi-simple  
 Lie group and the $G$-invariant Riemannian metric on $G/K$ is derived from a bi-invariant 
 Riemannian metric of $G$, then $G/K$ is said to be a 
 \textit{normal Riemannian homogeneous space}.
 Normal Riemannian homogeneous spaces are naturally reductive.
 Note that in case $K =\{ \id \}, G/K=G$, $U$ is related to 
 the symmetric bilinear map $\{\cdot,\cdot\}$ 
 defined in (\ref{anticommutator}) 
 by $2U=\{\cdot,\cdot\}$.

\subsection{}
 A smooth map $f:M \to N$ of a Riemannian $2$-manifold $M$ 
 into a Riemannian 
 manifold $N$ is said to be a \textit{harmonic map} if it is 
 a critical point of the energy 
$$
E(f)=\int \frac{1}{2}|df|^2\>dA
$$
 with respect to all compactly supported variations.
 It is well known that a map $f$ is harmonic if and 
 only if its tension field  $\mathrm{tr}(\nabla df)$ vanishes. 
 The harmonicity is invariant under conformal transformations of $M$.
 When the target space $N$ is a Riemannian reductive homogeneous space $G/K$,
 the harmonic map equation for $f$ has a particularly simple form.
 The harmonic map equation for maps into Lie groups was already 
 discussed in Section \ref{sc:Preliminaries}.
 Therefore we assume now that $\dim K\geq 1$.

 Let $f:\mathbb{D} \to G/K$ be a smooth map from a simply connected
 domain $\mathbb D \subset \C $ into a Riemannian reductive 
 homogeneous space. 
 Take a frame $F:\mathbb{D} \to G$ of $f$ and put $\alpha:=F^{-1}dF$.
 Then we have the identity (\textit{Maurer-Cartan equation}):
$$
 d\alpha+\frac{1}{2}[\alpha \wedge \alpha]=0.
$$
 Decompose $\alpha$ along the Lie algebra decomposition
 $\mathfrak{g}=\mathfrak{k}\oplus\mathfrak{p}$  in the form
$$
\alpha=\alpha_{\mathfrak{k}}+\alpha_{\mathfrak{p}},
\ \
\alpha_{\mathfrak{k}}\in
\mathfrak{k},
\ \
\alpha_{\mathfrak{p}}
\in
\mathfrak{p}.
$$
 We decompose $\alpha_{\mathfrak{p}}$ with respect to the conformal structure of $\mathbb{D}$ as
$$
\alpha_{\mathfrak p}=
\alpha_{\mathfrak p}^{\prime}+
\alpha_{\mathfrak p}^{\prime \prime},
$$
 where $\alpha_{\mathfrak{p}}^{\prime}$ and 
 $\alpha_{\mathfrak p}^{\prime \prime}$
 are the $(1,0)$ and $(0,1)$ part of 
 $\alpha_{\mathfrak p}$, respectively.

 The harmonicity of $f$ is equivalent to
\begin{equation}\label{harmonicity}
d(*\alpha_{\mathfrak p})+
[\alpha_{\mathfrak{k}} \wedge *\alpha_{\mathfrak p}]
=U(\alpha_{\mathfrak{p}}
\wedge
*\alpha_{\mathfrak{p}}
),
\end{equation}
 where $*$ denotes the Hodge star operator of $\mathbb{D}$.
 The Maurer-Cartan equation is split into its $\mathfrak{k}$-component and 
 $\mathfrak{p}$-component:
\begin{equation}\label{MC-h}
d\alpha_{\mathfrak{k}}+\frac{1}{2}
[\alpha_{\mathfrak{k}}\wedge \alpha_{\mathfrak{k}}]
+[\alpha_{\mathfrak{p}}^{\prime}\wedge 
\alpha_{\mathfrak{p}}^{\prime\prime}]_{\mathfrak{k}}=0,
\end{equation}
\begin{equation}\label{MC-p}
d\alpha
_{\mathfrak{p}}^{\prime}
+
[
\alpha_{\mathfrak{k}}
\wedge 
\alpha_{\mathfrak{p}}^{\prime}
]
+
d\alpha_{\mathfrak{p}}^{\prime\prime}
+
[
\alpha_{\mathfrak{k}}
\wedge 
\alpha_{\mathfrak{p}}^{\prime\prime}
]
+
[
\alpha_{\mathfrak{p}}^{\prime}
\wedge 
\alpha_{\mathfrak{p}}^{\prime\prime}
]_{\mathfrak{p}}
=0.
\end{equation}
 Hence for a harmonic map $f:\mathbb{D}\to G/K$ with a framing $F$, 
 the pull-back $1$-form $\alpha=F^{-1}dF$ satisfies
 (\ref{harmonicity}), (\ref{MC-h}) and (\ref{MC-p}). 
 Combining (\ref{harmonicity}) and (\ref{MC-p}), we obtain
\begin{equation}\label{harm+MC}
d\alpha^{\prime}_{\mathfrak p}+
[\alpha_{\mathfrak k} \wedge 
\alpha_{\mathfrak p}^{\prime}]=
-\frac{1}{2}[\alpha_{\mathfrak p}^{\prime}
\wedge 
\alpha_{\mathfrak p}^{\prime \prime}
]_{\mathfrak p}+
U(\alpha_{\mathfrak p}^{\prime}
\wedge \alpha_{\mathfrak p}^{\prime \prime}).
\end{equation}
 One can easily check that the harmonic map equation for $f$ combined with
 the Maurer-Cartan equation is equivalent to the system (\ref{MC-h}) and (\ref{harm+MC}).

 Assume that
\begin{equation}\label{admissible}
[\alpha_{\mathfrak p}^{\prime}
\wedge 
\alpha_{\mathfrak p}^{\prime \prime}
]_{\mathfrak p}=0, \ \ 
U(\alpha_{\mathfrak p}^{\prime}
\wedge \alpha_{\mathfrak p}^{\prime \prime})=0.
\end{equation}
 Then the harmonic map equation together with the Maurer-Cartan equation
 is reduced to the system of equations:
$$
d\alpha_{\mathfrak p}^{\prime}+
[\alpha_{\mathfrak k} \wedge 
\alpha_{\mathfrak p}^{\prime}]=0, \;\;
d\alpha_{\mathfrak k}+
\frac{1}{2}
[\alpha_{\mathfrak k} \wedge
\alpha_{\mathfrak k}]
+[\alpha_{\mathfrak p}^{\prime}
\wedge
\alpha_{\mathfrak p}^{\prime \prime}
]=0.
$$
 This system of equations is equivalent to the
 following \textit{zero-curvature representation}:
$$
d\alpha^{\lambda}+
\frac{1}{2}[\alpha^{\lambda}
\wedge \alpha^{\lambda}]
=0,
$$
where $\alpha^{\lambda}:=
\alpha_{\mathfrak h}
+\lambda^{-1}\alpha_{\mathfrak p}^{\prime}+
\lambda\> \alpha_{\mathfrak p}^{\prime \prime}$
with $\lambda \in\mathbb{S}^1$. 

\begin{Proposition}
 Let $\mathbb D$ be a domain in  $\C$ and 
 $f:\mathbb{D} \to G/K$ a harmonic map
 which satisfies the admissibility condition {\rm (\ref{admissible})}.
 Then the loop of connections $d+\alpha^{\lambda}$ is flat for
 all $\lambda$. Namely{\rm:}
\begin{equation}\label{eq:flatconnections}
d\alpha^{\lambda}+\frac{1}{2}
[\alpha^{\lambda}\wedge
\alpha^{\lambda}]=0
\end{equation}
 for all $\lambda$.
 Conversely assume that $\mathbb{D}$ is simply connected. Let
 $\alpha^{\lambda}=\alpha_{\mathfrak k}+\lambda^{-1} \alpha_{\mathfrak
 p}^{\prime}+\lambda\alpha_{\mathfrak  p}^{\prime\prime}$ be an
 $\mathbb{S}^1$-family of $\mathfrak{g}$-valued $1$-forms 
 satisfying \eqref{eq:flatconnections} for all $\lambda \in \mathbb{S}^1$.
 Then there exists a $1$-parameter family of maps
$F^{\lambda} :\mathbb{D} \to G$ such that
$$
(F^{\lambda})^{-1}dF^{\lambda}=\alpha^{\lambda}
\;\;\;\mbox{and}\;\;\;
f^{\lambda}=F^{\lambda}\;{\mbox{\rm mod}}\; K:\mathbb{D} \to G/K
$$
 is harmonic for all $\lambda$.
\end{Proposition}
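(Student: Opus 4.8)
The plan is to prove this as the standard zero-curvature (DPW-type) equivalence, exploiting that essentially all the content is already packaged in the reduced system displayed just before the Proposition, namely
\[
d\alpha_{\mathfrak p}' + [\alpha_{\mathfrak k}\wedge\alpha_{\mathfrak p}'] = 0, \qquad d\alpha_{\mathfrak k} + \tfrac12[\alpha_{\mathfrak k}\wedge\alpha_{\mathfrak k}] + [\alpha_{\mathfrak p}'\wedge\alpha_{\mathfrak p}''] = 0,
\]
which, under the admissibility condition \eqref{admissible}, is equivalent to the harmonic map equation together with the Maurer--Cartan equation (this is the equivalence established in the discussion preceding the Proposition). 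Thus the only genuine work is to translate between this reduced system and the flatness \eqref{eq:flatconnections} of the loop of connections $d+\alpha^{\lambda}$, and, for the converse, to produce the frames $F^{\lambda}$ by integration.

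For the direct statement I would substitute $\alpha^{\lambda}=\alpha_{\mathfrak k}+\lambda^{-1}\alpha_{\mathfrak p}'+\lambda\alpha_{\mathfrak p}''$ into $d\alpha^{\lambda}+\tfrac12[\alpha^{\lambda}\wedge\alpha^{\lambda}]$ and collect powers of $\lambda$. Since $[\,\cdot\wedge\cdot\,]$ is symmetric on $\mathfrak g$-valued $1$-forms, the curvature has coefficients ranging over $\lambda^{-2},\dots,\lambda^{2}$. The extreme coefficients $\tfrac12[\alpha_{\mathfrak p}'\wedge\alpha_{\mathfrak p}']$ and $\tfrac12[\alpha_{\mathfrak p}''\wedge\alpha_{\mathfrak p}'']$ vanish identically, as each is a bracket of a pure-bidegree form with itself. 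The coefficient of $\lambda^{-1}$ is exactly the left-hand equation of the reduced system, the coefficient of $\lambda^{0}$ is the right-hand one, and the coefficient of $\lambda^{1}$ is the complex conjugate of the $\lambda^{-1}$ equation (recall $\alpha_{\mathfrak k}$ is real and $\overline{\alpha_{\mathfrak p}'}=\alpha_{\mathfrak p}''$). Because a harmonic $f$ satisfying \eqref{admissible} solves the reduced system, every coefficient vanishes and \eqref{eq:flatconnections} holds for all $\lambda$.

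For the converse I would first integrate. For each fixed $\lambda\in\mathbb S^1$, flatness of $d+\alpha^{\lambda}$ on the simply connected domain $\mathbb D$ yields, by the standard integrability theorem for $\mathfrak g$-valued $1$-forms, a map $F^{\lambda}:\mathbb D\to G$ with $(F^{\lambda})^{-1}dF^{\lambda}=\alpha^{\lambda}$, unique once normalized by $F^{\lambda}(z_*)=\mathrm{id}$ at a base point $z_*$; this normalization makes $F^{\lambda}$ depend smoothly (in fact analytically) on $\lambda$ by the parameter-dependence theorem for linear systems. Reading off the $\mathfrak k$- and $\mathfrak p$-parts of $(F^{\lambda})^{-1}dF^{\lambda}$ identifies $\alpha_{\mathfrak k}$ as the $\mathfrak k$-component and $\lambda^{-1}\alpha_{\mathfrak p}'+\lambda\alpha_{\mathfrak p}''$ as the $\mathfrak p$-component. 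Expanding \eqref{eq:flatconnections} in powers of $\lambda$ exactly as above recovers the reduced system; in particular the $\mathfrak p$-part of the $\lambda^{0}$-coefficient produces $[\alpha_{\mathfrak p}'\wedge\alpha_{\mathfrak p}'']_{\mathfrak p}=0$, the first half of \eqref{admissible}, while its $\mathfrak k$-part is precisely \eqref{MC-h}. Invoking the equivalence between the reduced system and the pair \eqref{MC-h}, \eqref{harm+MC}, one concludes that $f^{\lambda}=F^{\lambda}\bmod K$ is harmonic for every $\lambda$.

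The step I expect to be most delicate is not the algebra but ensuring that the reduced system genuinely certifies harmonicity in the converse: the passage from the reduced system to \eqref{harm+MC} uses the admissibility \eqref{admissible}, and although the first admissibility identity falls out of the $\mathfrak p$-part of the $\lambda^{0}$-coefficient, the vanishing $U(\alpha_{\mathfrak p}'\wedge\alpha_{\mathfrak p}'')=0$ must be in force for \eqref{harm+MC} to collapse onto the $\lambda^{-1}$ equation. I would therefore either carry \eqref{admissible} as a standing hypothesis throughout the Proposition or verify that it holds automatically for the admissible class at hand. The secondary technical point, the smooth and analytic dependence of $F^{\lambda}$ on $\lambda$, I would dispatch via the base-point normalization together with continuous dependence of solutions of linear ordinary differential equations on parameters.
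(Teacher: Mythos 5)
Your proposal is correct and follows essentially the same route as the paper, which gives no separate proof: the Proposition simply packages the discussion immediately preceding it, namely the expansion of the curvature of $d+\alpha^{\lambda}$ in powers of $\lambda$, the identification of the coefficients with the reduced system $d\alpha_{\mathfrak p}^{\prime}+[\alpha_{\mathfrak k}\wedge\alpha_{\mathfrak p}^{\prime}]=0$, $d\alpha_{\mathfrak k}+\tfrac12[\alpha_{\mathfrak k}\wedge\alpha_{\mathfrak k}]+[\alpha_{\mathfrak p}^{\prime}\wedge\alpha_{\mathfrak p}^{\prime\prime}]=0$, and the standard integration of the flat connections on the simply connected domain for the converse. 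The ``delicate point'' you flag is genuine, and is a defect of the Proposition's statement rather than of your argument: flatness for all $\lambda$ yields $[\alpha_{\mathfrak p}^{\prime}\wedge\alpha_{\mathfrak p}^{\prime\prime}]_{\mathfrak p}=0$ but not $U(\alpha_{\mathfrak p}^{\prime}\wedge\alpha_{\mathfrak p}^{\prime\prime})=0$, so the converse as written is only justified when the $U$-half of \eqref{admissible} is assumed or holds automatically --- which it does in every case the paper actually uses (symmetric or naturally reductive targets such as $\mathbb{H}^2=\mathrm{SU}_{1,1}/\mathrm{U}_1$, where $U=0$), so carrying \eqref{admissible} as a standing hypothesis, as you propose, is exactly the needed repair.
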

 The $1$-parameter family $\{f^\lambda\}_{\lambda \in \mathbb{S}^1}$ 
 of harmonic maps 
 is called the \textit{associated family} of the original 
 harmonic map $f=f^{\lambda}\vert_{\lambda=1}$ which satisfies the 
 admissibility condition. The map $F^\lambda$ is called an  
 \textit{extended frame} of $f$. 
 When the target space $G/K$ is a Riemannian symmetric space 
 with a semi-simple $G$, then the
 admissibility condition is fulfilled automatically for any $f$, 
 since $U=0$ and 
 $[\mathfrak{p},\mathfrak{p}]\subset \mathfrak{k}$. In the case 
 $G/K=\mathrm{Nil}_{3}\rtimes\mathrm{U}_1/\mathrm{U}_1$,  
 harmonic maps into $\mathrm{Nil}_3$ do  in general not satisfy 
 the admissibility condition.  
 Note that harmonic maps into a naturally 
 reductive Riemannian homogeneous space $G/K$ 
 satisfying the admissibility condition 
 are called \textit{strongly harmonic maps} in \cite{Khemar}. 
 Note that all the  examples of minimal surfaces in 
 $\mathrm{Nil}_3\rtimes \mathrm{U}_1/\mathrm{U}_1$
 discussed in this paper do not satisfy the admissibility condition. 
%
%
%\section{Gauge theoretic formulation for harmonic maps}
% Let $f:M\to G$ be a smooth map from a Riemann surface $M$ into a 
% Lie group $G\subset \mathrm{GL}_{n}\mathbb{R}$ equipped with a left
% invariant Riemannian metric $\langle\cdot,\cdot\rangle$. 
% Consider the product bundle $P=M\times{G}$ over $M$. 
% Denote by $\mathcal{A}_{P}$ the affine space of all connections on $P$ and 
% $A^{1}(\mathfrak{g}_P)$ the space of all the adjoint bundle $\mathfrak{g}_P$-valued \footnote{Sepp's question: $\mathfrak g_p$ the ``adjoint 
% bundle of what?''}
% $1$-forms on $M$. 
% Consider the connection $d+A$ on $P$ with 
% $A=\alpha/2$, where $\alpha=f^{-1}df$.
% Next we consider $\Psi:=\alpha/2$\footnote{$\Psi =A$?} 
% as a $\mathfrak{g}_P$-valued 
% $1$-form on $M$ referred to as a \textit{Higgs field}.
% \footnote{the Higgs field associated one.} 
% Then the harmonic map equation for $f$ can be rewritten 
% in the form
%%
%\begin{eqnarray*}
%&{}& F_{A}+\frac{1}{2}[\Psi\wedge \Psi]=0,\\
%&{}& d_{A}\Psi=0, \ \ d_{A}*\Psi+\{\Psi\wedge *\Psi\}=0.
%\end{eqnarray*} 
%%
% Here $d_A$ is the covariant exterior differentiation and   
% $F_A$ is the curvature form of the connection $d+A$, respectively.
% In case the metric $\langle\cdot,\cdot\rangle$ is bi-invariant, 
% this system of equations is 
% the \textit{gauge theoretic equation for harmonic maps} 
% introduced by Hitchin \cite{Hitchin}. 
%
\bibliographystyle{plain}
\def\cprime{$'$}

\end{document}